\newcommand{\beq}{ \begin{equation} }
\newcommand{\eeq}{ \end{equation} }
\newcommand*\di{\mathop{}\!\mathrm{d}}
\newcommand*\ii{\mathop{}\!\mathrm{i}}
\theoremstyle{plain}
\newtheorem{theorem}{Theorem}[section]
\numberwithin{theorem}{section}
\newtheorem{lemma}[theorem]{Lemma}
\newtheorem{proposition}[theorem]{Proposition}
\newtheorem{corollary}[theorem]{Corollary}
\theoremstyle{definition}
\newtheorem{definition}[theorem]{Definition}
\newtheorem{assumption}[theorem]{Assumption}
\newtheorem{remark}[theorem]{Remark}
\numberwithin{equation}{section}
\begin{document}

\title{Local law and Tracy--Widom limit for sparse sample covariance matrices}

\author{Jong Yun Hwang}
\address{Department of Mathematical Sciences, KAIST, Daejeon, Korea}
\email{jyh9006@kaist.ac.kr}

\author{Ji Oon Lee}
\address{Department of Mathematical Sciences, KAIST, Daejeon, Korea}
\email{jioon.lee@kaist.edu}

\author{Kevin Schnelli}
\address{Department of Mathematics, KTH Royal Institute of Technology, Stockholm, Sweden}
\email{schnelli@kth.se}

\keywords{High dimensional sample covariance matrices, local law, Tracy--Widom distribution.}
\subjclass[2010]{60B20,62H10}

\maketitle
\begin{abstract}
We consider spectral properties of sparse sample covariance matrices, which includes biadjacency matrices of the bipartite Erd\H{o}s--R\'enyi graph model. We prove a local law for the eigenvalue density up to the upper spectral edge. Under a suitable condition on the sparsity, we also prove that the limiting distribution of the rescaled, shifted extremal eigenvalues
is given by the GOE Tracy--Widom law with an explicit formula on the deterministic shift of the spectral edge. For the biadjacency matrix of an Erd\H{o}s--R\'enyi graph with two vertex sets of comparable sizes $M$ and $N$, this establishes Tracy--Widom fluctuations of the second largest eigenvalue when the connection probability $p$ is much larger than $N^{-2/3}$ with a deterministic shift of order $(Np)^{-1}$.
\end{abstract}

\vskip20pt

\section{Introduction}

Sample covariance matrices form one fundamental class of random matrices. They are of great importance in high-dimensional data and multivariate statistics. Spectral properties of sample covariance matrices are particularly interesting as they are crucial in various procedures in real data analysis such as principal component analysis (PCA). 

It is well known that the limiting spectral distribution of a sample covariance matrix is governed by the Marchenko--Pastur law when the sample size is comparable to the dimension \cite{MarchenkoPastur1967}. In this case, the fluctuations of the largest eigenvalue of the sample covariance matrix follow the Tracy--Widom laws. This was first proved for the complex Wishart ensemble by Johansson \cite{Johansson2000} and for the real Wishart ensemble by Johnstone \cite{Johnstone2001}. For sample covariance matrices with non-Gaussian entries but identity population covariance, the Tracy--Widom limit for the largest eigenvalue was established by Pillai and Yin \cite{PillaiYin2014}, and a necessary and sufficient condition on the entries' distribution for the limit to hold was obtained by Ding and Yang \cite{DingYang2017}. In the general non-null case, where the general population matrix is not a multiple of the identity matrix, the Tracy--Widom limit was identified in \cite{ElKaroui2007,LeeSchnelli2016} and the edge universality was proved in \cite{BaoPanZhou2015,KnowlesYin2017}. 

The Tracy--Widom laws are also the answer to the question about the fluctuations of the largest eigenvalues of various random matrix models, including the adjacency matrix of random graphs. The simplest case is the Erd\H{o}s--R\'enyi graph, where any pair of vertices are connected with probability $p$, independently from other pairs. For the adjacency matrices of the Erd\H{o}s--R\'enyi graph, while the largest eigenvalue is macroscopically separated from the bulk of the spectrum, the second largest eigenvalue exhibits Tracy--Widom fluctuations as long as the connection probability~$p$ is independent of the size $N$. We remark that the fluctuations of the second largest eigenvalue encode key properties of the random graph in many applications, including hypothesis testing for community detection in stochastic block model \cite{BickelSarkar2016,Lei2016}.

The study of the behavior of the second largest eigenvalue becomes significantly harder when $p\equiv p(N)$ scales as $N$ varies. If $p \to 0$ as $N \to \infty$, we say that the graph (or the corresponding adjacency matrix) is sparse. The analysis for the sparse case was only recently done by Erd\H{o}s, Knowles, Yau and Yin \cite{ErdosKnowlesYauYin2013,ErdosKnowlesYauYin2012} and was further extended in \cite{LeeSchnelli2016} and \cite{HuangLandonYau2017}. The main idea in a deeper analysis of the local eigenvalue density, especially for sparse matrices, is the local semicircle law. It is the further local refinement of Wigner's semicircle law that governs the global behavior of the eigenvalue density of Wigner matrices. 

If the underlying random graph is directed, we lose the symmetry of the adjacency matrix, and we are led to consider the singular values of the matrices instead of the eigenvalues. Related to such a model is a bipartite random graph, where the vertices can be decomposed into two groups within which vertices are not connected to each other. In these cases, the adjacency matrices can naturally be identified with sample covariance matrices, and when the connection probability $p$ tends to zero as the size of matrix grows, we are led to consider sparse sample covariance matrices.

Another motivation for our research on the largest eigenvalues of sample covariance matrices stems from PCA in statistics and signal processing. In practice, one of the most significant challenges in using PCA is to determine the number of components, i.e., the rank of the signal matrix in a noisy matrix. For the null hypothesis testing, which tests whether a signal is present, the largest eigenvalues can be exploited for the test statistics as discussed in \cite{NadakuditiEdelman2008,BDMN2008}. The idea can be further extended to the estimation of the true rank by a sequential application of the strategy in \cite{KritchmanNadler2008} or by a method based on the conditional singular value test \cite{ChoiTaylorTibshirani2017}. We refer to \cite{BaoPanZhou2015} for more applications of the largest eigenvalues in high-dimensional statistical inference.

In this paper, we study the spectral properties of sparse sample covariance matrices, including the local law for $p \gg N^{-1}$ and the behavior of the largest eigenvalues. We prove that the limiting distribution of the shifted, rescaled largest eigenvalue is given by GOE Tracy--Widom distribution if $p \gg N^{-2/3}$, which was also assumed in \cite{LeeSchnelli2017}. The shift is deterministic, and it can be precisely computed in terms of $N$ and $p$. As in \cite{LeeSchnelli2017}, while the shift for the case $p \gg N^{-1/3}$ is negligible, it is larger than the Tracy--Widom fluctuations if $p \ll N^{-1/3}$. 

Our analysis is based on a refined local Marchenko--Pastur law that provides estimates for the local eigenvalue density down to the optimal scale at the upper edge of the spectrum.
Following the strategy in \cite{LeeSchnelli2016}, we introduce a polynomial $P$ that describes a deterministic correction to Marcenko-Pastur law. These corrections provide the crucial information about the precise location of the largest eigenvalue. Adapting the approach in \cite{LeeSchnelli2016}, we obtain the polynomial by using resolvent expansions instead of the Schur complement formula, but due to the structure of the sample covariance matrix, we employ a linearization trick before applying resolvent expansion methods. Technically, we control high moments of $|P|$ by a recursive moment estimate, where the bound utilizes the lower moments of $|P|$. After establishing the local Marchenko--Pastur law, we use the Green function comparison method to prove the Tracy--Widom limit of the largest eigenvalue. In the Green function comparison, instead of applying the Lindberg swapping trick with matching moments, we use a continuous flow that interpolates the given sparse sample covariance matrix and a `nicely-behaving' sample covariance matrix, which is a Wishart ensemble. The continuous interpolation has been successfully used in the context of edge universality in various random matrix models;
we refer to Section \ref{subsec:outline} for further details. Another technical difficulty stems from the lack of symmetry in Marchenko--Pastur law. Compared to the Wigner case in \cite{LeeSchnelli2016}, the polynomial $P$ has a more complicated form, and it makes the analysis for the limiting distribution more involved; see Section \ref{sec:stieltjes} for technical details.

For sparse matrices of Wigner-type, the result in \cite{LeeSchnelli2016} was extended by Huang, Landon and Yau \cite{HuangLandonYau2017} to cover the regime $N^{-7/9} \ll p \leq N^{-2/3}$. In that case they proved that the shifted, rescaled largest eigenvalue exhibits a phase transition from Tracy--Widom to Gaussian fluctuations at $p \sim N^{-2/3}$. We expect an analogous transition to occur for sparse sample covariance matrices, but we do not pursue this direction in the current article.

This paper is organized as follows: In Section \ref{sec:prelim}, we define the model and introduce the main results with the outline of the strategy for the proof. In Section \ref{sec:stieltjes}, we prove some properties of the deterministic refinement of the Marchenko--Pastur law. In Section \ref{sec:local}, we prove the local law based on the key technical result on the recursive moment estimates. In Section \ref{sec:TW}, we prove the Tracy--Widom limit of the largest eigenvalue via the Green function comparison method. Some technical results are adaptions from~\cite{LeeSchnelli2016} and are therefore postponed to the Supplement~\cite{supplement}.

\begin{remark}[Notational remark 1]
We use the symbols $O( \cdot )$ and $o( \cdot )$ for the standard big-O and little-o notation. The notations $O$, $o$, $\ll$, $\gg$ refer to the limit $N,M \to \infty$ with $N/M=d$ fixed unless otherwise stated. The notation $a \ll b$ means $a = o(b)$. We use $c$ and $C$ to denote positive constants that do not depend on $N$, usually with the convention $c \leq C$. Their values may change from line to line. We write $a \sim b$ if there is $C \geq 1$ such that $C^{-1}|b| \leq |a| \leq C|b|$.
\end{remark}

\section{The model and main results}\label{sec:prelim}

\subsection{Motivating examples} 

Before presenting our model in detail and stating our main results, we outline a few motivating examples for the present work.

\subsubsection{Signal detection with missing values} \label{subsec:example1}

Consider an $M$-dimensional signal-plus-noise vector
\begin{align}
	\mathbf{y}:= A \mathbf{s} + \mathbf{z},
\end{align}
where $\mathbf{s}$ is an $N$-dimensional signal vector, $A$ is an $M \times N$ deterministic matrix, and $\mathbf{z}$ is an $M$-dimensional random vector. To understand the case where the coordinates $z_i$ $(1 \leq i \leq M)$ are mostly zero, hence $\mathbf{z}$ is sparse, we let
\begin{align}
z_i = B_i V_i,
\end{align}
where $(B_i)$ and $(V_i)$ are independent families of independent, identically distributed (i.i.d.) centered random variables. The random variables $(V_i)$ represent the noise and satisfy $\mathbb{E} V_i^2 = 1$ and $\mathbb{E} V_i^{2k} \leq C_k$ for some constants $(C_k)$. The random variables $(B_i)$ are of Bernoulli type with parameter $p$, i.e.,
\begin{align} \label{def:Bernoulli}
\mathbb{P} \Big(B_i = \frac{1}{\sqrt{Np}} \Big) =p, \qquad \mathbb{P}(B_i = 0) = 1-p.
\end{align}
The sparsity of the noise, which is determined by the probability $p$, may originate from the existence of a certain error threshold or the lack of data due to missing observations.

The first step in the analysis is to determine whether there is any signal present. Under the null hypothesis, the sample covariance matrix associated with $\mathbf{y}$ is nothing more than the one associated with $\mathbf{z}$. The limiting distribution of its largest eigenvalue is assumed to be determined by Tracy--Widom--Airy statistics if $p \sim 1$. One can also use the statistics of Onatski \cite{Onatski2009} given by $R:=(\mu_1-\mu_2)/(\mu_2-\mu_3)$, where $\mu_1, \mu_2, \mu_3$ denote the first, second, and the third largest eigenvalue, respectively. It was shown \cite{Onatski2009} in the complex setting that $R$ is asymptotically pivotal under the null hypothesis.

We introduce the sparsity parameter $q$ through
\begin{align}
p = \frac{q^2}{N}
\end{align}
with $0 < q = \sqrt{Np} \leq N^{1/2}$, where we allow $q$ to depend on $N$. When $q = N^{1/2}$, we regain a usual sample covariance matrix.

The concept of the sparsity is crucial in the analysis for small $N$. For example, while the case with probability $p=0.3$ would not be classified as a sparse case for large $N$, with $N=30$ the sparsity parameter $q = 9 = N^{\phi}$ for $\phi \approx 0.323$, which falls into the regime we are interested in.

\subsubsection{Simple Markov switching model} \label{subsec:example3}

The simplest Markov switching model involves a random vector
\begin{align}
	\mathbf{y}:= \mathbf{\mu}_{\mathbf{s}} + \sigma_{\mathbf{s}} \mathbf{z},
\end{align}
where $\mathbf{s}$ denotes the unobserved state indicator; see~\emph{e.g.} Equation (1) in \cite{Timmermann2000}. If the entries of $\mathbf{s}$ are i.i.d. Bernoulli random variables, it is called the random two-regime model; see~\emph{e.g.} \cite{Quandt1972}. Under the null hypothesis
\begin{align}
	\mathbf{H}_0 : \mathbf{\mu}_{\mathbf{s}} = \mathbf{0},
\end{align}
this model reduces to the one in Subsection \ref{subsec:example1}, if we simply write
\begin{align}
	\sigma_{\mathbf{s}} z_i = \sigma B_i V_i,
\end{align}
where $B_i$ is a Bernoulli random variable defined in \eqref{def:Bernoulli}. To generalize it further, one can choose a Markov chain with states $\{ 0, 1\}$ instead of a Bernoulli variable. Then, one can apply the results on the largest eigenvalues of sparse sample covariance matrices by analyzing the $2 \times 2$ transition matrix.

\subsubsection{Bipartite stochastic block model} \label{subsec:example2}

Consider a graph with two vertex sets $V_1$ and $V_2$ of size $M$ and $N$, respectively. Each $V_i$ is further divided into two or more communities. In the simple case of two communities in each vertex set, each $V_i$ ($i=1, 2$) is with partition $(P_i, Q_i)$, and edges are added independently at random between $V_1$ and $V_2$ with probabilities depending on which parts the vertices are in; edges between $P_1$ and $P_2$ or $Q_1$ and $Q_2$ are added with probability $p$, while the other edges are added with probability $p'$. This model was proposed by Feldman, Perkins, and Vempala \cite{Feldman2015NIPS} as a generalization of the classic stochastic block model to unify graph partitioning and planted constraint satisfaction problems (CSP's) into one problem. The spectral analysis on the model was studied in \cite{Florescu2016COLT}.

In the spectral analysis for the graph partitioning, the $M \times N$ biadjacency matrix $X$ can be considered, which is defined as follows: 
\begin{align} \label{eq:bsbm}
	X_{\alpha i} =
		\begin{cases}
		1 & \text{ if } v_{\alpha} \in V_1 \text{ and } w_i \in V_2 \text{ are connected}, \\
		0 & \text{ otherwise}.
		\end{cases}
\end{align}
Then, the singular vector of $X$ corresponding to the second largest singular value, or equivalently, the second eigenvector $X X^{\dag}$ is correlated with the partition of $V_1$. Such an algorithm requires that the second largest eigenvalue is well-separated from the spectral norm of the noise matrix. If the probability $p$ and $p'$ tend to $0$ as $M, N \to \infty$, the graph is sparse and the noise matrix becomes a sparse sample covariance matrix. The null model in this case is the biadjacency matrix of the bipartite Erd\H{o}s--R\'enyi graph where $p=p'$.

\subsection{Definitions and notations}
We consider the following type of sample covariance matrices:
\begin{definition}[Sample covariance matrices] Let $X=(X_{\alpha i})$ be an real $M\times N$ matrix with independent entries satisfying the moment conditions
\begin{align}
\mathbb{E}X_{\alpha i} =0,\quad \mathbb{E}(X_{\alpha i})^2 = \frac{1}{N}.
\end{align}
The sample covariance matrix associated with $X$ is given by $S=X^\dagger X$. Furthermore, $M\equiv M(N)$ with
\begin{align}\label{le d}
d_N=\frac{N}{M}\rightarrow d\in [1,\infty),
\end{align}
as $N\rightarrow\infty$. For simplicity, we assume that $d_N$ is constant, hence we use $d$ instead of $d_N$. 
\end{definition}

The assumption $M\equiv M(N)$ with $N/M \to d$ is reasonable in high-dimensional settings. We remark that our results also hold for the case $d < 1$ after taking $(M-N)$ zero eigenvalues into consideration.

Next, we introduce some basic definitions and set notations.

\begin{definition}[High probability event] We say that an $N$-dependent event $\Xi \equiv \Xi^{(N)}$ holds with high probability if, for any large $D>0$,
\begin{align}
\mathbb{P}(\Xi^{(N)})\geq 1-N^{-D},
\end{align}
for sufficiently large $N\geq N_0(D)$.
\end{definition}
\begin{definition}[Stochastic domination] Let $Y_1 \equiv Y_1^{(N)}, Y_2 \equiv Y_2^{(N)}$ be $N$-dependent non-negative random variables. We say that $Y_1$ stochastically dominates $Y_2$ if, for all small $\epsilon>0$ and large $D>0$,
\begin{align}
\mathbb{P}(Y_1^{(N)}>N^\epsilon Y_2^{(N)})\leq N^{-D},
\end{align}
for sufficiently large $N\geq N_0(\epsilon,D)$, and we write $Y_1\prec Y_2$.
\end{definition}

\begin{definition}[Stieltjes transform] Given a probability measure $\nu$ on $\mathbb{R}$, its Stieltjes transform is the analytic function $m_\nu : \mathbb{C}^+ \rightarrow \mathbb{C}^+$, with $\mathbb{C} ^+ := \{z=E+\ii\eta : E\in \mathbb{R}, \eta>0\}$, defined by
\begin{align}
m_\nu(z) := \int_{\mathbb{R}}\frac{\di\nu(x)}{x-z}, \quad (z\in\mathbb{C}^+).
\end{align}
\end{definition}

Note that $\lim_{\eta\rightarrow \infty} \ii\eta \, m_{\nu} (\ii\eta) = -1$ since $\nu$ is a probability measure. Conversely, if an analytic function $m:\mathbb{C}^+ \rightarrow \mathbb{C}^+$ satisfies $\lim_{\eta\rightarrow \infty} \ii\eta \, m(\ii\eta) = -1$, then it is the Stieltjes transform of a probability measure; see \emph{e.g.}~\cite{Akhiezer}.

Choosing $\nu$ to be the Marchenko--Pastur law with density 
\begin{align*}
\frac{1}{2\pi x}\sqrt{(\lambda_+ -x)(x-\lambda_-)}
\end{align*}
 on $[\lambda_-, \lambda_+]$, where 
\begin{align}
\lambda_+=\Big(1+\frac{1}{\sqrt d}\Big)^2, \quad \lambda_-=\Big(1-\frac{1}{\sqrt d}\Big)^2,
\end{align}
one can shows that $m_\nu$, denoted by $m_{\mathrm{MP}}$, is explicitly given by
\begin{align}\label{le MP}
m_{\mathrm{MP}}(z) = \frac{-\big(z+1-\frac{1}{d}\big) + \sqrt{(z+1-\frac{1}{d}\big)^2-4z}}{2z},
\end{align}
where we choose the branch of the square root such that $m_{\mathrm{MP}}(z) \in \mathbb{C}^+, z\in\mathbb{C}^+$. It directly follows that
\begin{align}
1+\big(z+1-\frac{1}{d}\big)m_{\mathrm{MP}}(z) + zm_{\mathrm{MP}}(z)^2 = 0, \quad (z\in\mathbb{C}^+).
\end{align}

\begin{definition}[Green function and its normalized trace] Given a real symmetric matrix $H$ we define its Green function by
\begin{align}
G^H(z):=\frac{1}{H-zI},\quad (z\in \mathbb{C}^+),
\end{align}
and the normalized trace of $G^H$ by
\begin{align}
m^H(z):=\frac{1}{N}\Tr G^H (z), \quad (z\in\mathbb{C}^+).
\end{align}
The matrix entries of $G^{H}(z)$ are denoted by $G^{H}_{ij}(z)$. In the following we often drop the explicit $z$-dependence from the notation for $G^H(z)$ and $m^H(z)$.
\end{definition}

\subsection{Main results}

The sparse sample covariance matrices we consider satisfies the following assumption:

\begin{assumption} \label{assume} Fix any small $\phi>0$. We assume that $X=(X_{\alpha i})$ is a real $M\times N$ matrix whose entries are independent, identically distributed (i.i.d.) random variables. We also assume that $(X_{\alpha i})$ satisfy the moment conditions
\begin{align}\label{eq:moments}
\mathbb{E}X_{\alpha i} =0,\quad \mathbb{E}(X_{\alpha i})^2 = \frac{1}{N}, \quad \mathbb{E}|X_{\alpha i}|^k \leq \frac{(Ck)^{ck}}{Nq^{k-2}},\quad (k\geq 3),
\end{align}
with sparsity parameter $q$ satisfying
\begin{align} \label{eq:sparsity}
N^{\phi}\leq q \leq N^{1/2}.
\end{align}
\end{assumption}

Note that the real Wishart ensemble corresponds to the case $\phi=1/2$. 
We denote by $\kappa^{(k)}$ the $k$-th cumulant of the i.i.d.\ random variables $(X_{\alpha i})$. Under Assumption \ref{assume} we have $\kappa^{(1)}=0, \kappa^{(2)}=1/N,$
\begin{align}
|\kappa^{(k)}|\leq \frac{(2Ck)^{2(c+1)k}}{Nq^{k-2}},\quad (k\geq 3).
\end{align}
For example, if $X_{\alpha i}$ is the Bernoulli random variable defined in \eqref{eq:bsbm} with $p=p'$, then letting $q = \sqrt{Np}$,
\begin{align}
\mathbb{E} \left[(X_{\alpha i})^k \right] = \frac{(-p)^k (1-p) + (1-p)^k p}{(Np(1-p))^{k/2}} = \frac{1}{Nq^{k-2}} (1+ O(p)) = \kappa^{(k)} (1+ O(p)).
\end{align}
We will also use the normalized cumulants, $s^{(k)}$, by setting
\begin{align}
s^{(1)}:=0, \quad s^{(2)}:=1, \quad s^{(k)}:=Nq^{k-2}\kappa^{(k)}, \quad (k\geq 3).
\end{align}

\subsubsection{Improved local law up to the upper edge for sparse random matrices} 

Let $\lambda_1 \geq \lambda_2 \geq \cdots \geq \lambda_N\geq 0$ be the ordered eigenvalues of $X^\dagger X$. Note that $m^{X^\dagger X}$ is the Stieltjes transform of the empirical eigenvalue distributions, $\mu^{X^\dagger X}$, of $X^\dagger X$ given by
\begin{align}
\mu^{X^\dagger X} := \frac{1}{N} \sum_{i=1}^N \delta_{\lambda_i} .
\end{align}
Recall $d$ from~\eqref{le d}. We introduce the following domain of the upper half-plane
\begin{align}
\mathcal{E} = \{ E+\ii \eta : \frac{\lambda-}{2} \leq E \leq \lambda_+ +1, \, 0 < \eta < 3 \} \qquad \text{if } d>1,
\end{align}
respectively,
\begin{align}
\mathcal{E} = \{ E+\ii \eta : \frac{1}{10} \leq E \leq \lambda_+ +1, \, 0 < \eta < 3 \} \qquad \text{if } d=1.
\end{align}

Our first main result is the local law for $m^{X^\dagger X}$ up to the upper spectral edge.

\begin{theorem}\label{thm:locallaw}
Let $X$ satisfy Assumption \ref{assume} with $\phi>0$. Then, there exist deterministic numbers $L_+ \geq L_- \geq 0$ and an algebraic function $\widetilde{m}:\mathbb{C}^+ \rightarrow \mathbb{C}^+$ such that the following hold:
\begin{enumerate}
\item[(1)] The function $\widetilde{m}$ is the Stieltjes transform of a deterministic probability measure $\widetilde{\rho}$, i.e., $\widetilde{m}(z) = m_{\widetilde{\rho}}(z).$ The measure $\widetilde\rho$ is supported on the interval $[L_-, L_+]$ and is absolutely continuous with respect to Lebesgue measure with a strictly positive, continuous density on $(L_-, L_+)$. 
\item[(2)] The function $\widetilde{m}\equiv\widetilde{m}(z)$, $z\in\mathbb{C}^+$, is a solution to the polynomial equation
\begin{align} \label{eq:polynomial}
P_z(\widetilde{m}):=1+\big(z+1-\frac{1}{d}\big)\widetilde{m}+z\widetilde{m}^2+\frac{s^{(4)}}{q^2}\widetilde{m}^2\big(z\widetilde{m}+1-\frac{1}{d}\big)^2 =0.
\end{align}
\item[(3)] The normalized trace $m^{X^\dagger X}$ of the Green function of $X^\dagger X$ satisfies the local law
\begin{align}\label{eq:locallaw}
|m^{X^\dagger X}(z) -\widetilde{m}(z)| \prec \frac{1}{q^2}+\frac{1}{N\eta},
\end{align}
uniformly on the domain $\mathcal{E}, z=E+\ii\eta$.
\end{enumerate}
The right endpoint $L_+$ is given by
\begin{align}\label{eq:L_+}
L_+=\Big(1+\frac{1}{\sqrt d}\Big)^2+\frac{1}{\sqrt d}\Big(1+\frac{1}{\sqrt d}\Big)^2 \frac{s^{(4)}}{q^2} +O(q^{-4})
\end{align}
and the left endpoint $L_-$ for the case $d > 1$ is given by
\begin{align}
L_-=\Big(1-\frac{1}{\sqrt d}\Big)^2-\frac{1}{\sqrt d}\Big(1-\frac{1}{\sqrt d}\Big)^2 \frac{s^{(4)}}{q^2} +O(q^{-4}).
\end{align}
\end{theorem}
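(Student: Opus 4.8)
The plan is to prove Theorem \ref{thm:locallaw} following the strategy of \cite{LeeSchnelli2016}, adapted to the sample covariance (Marchenko--Pastur) setting. First I would establish a \emph{linearization} of the problem: since $S = X^\dagger X$ is not a Wigner-type matrix, I introduce the $(M+N)\times(M+N)$ symmetric block matrix
\begin{align}
H = \begin{pmatrix} 0 & X \\ X^\dagger & 0 \end{pmatrix},
\end{align}
whose resolvent encodes $G^{X^\dagger X}(z)$ via a Schur-type identity, and then work with resolvent expansions for $H$ rather than the Schur complement formula. The deterministic parts (1) and (2) of the theorem are handled in Section \ref{sec:stieltjes} (assumed here): the polynomial $P_z$ in \eqref{eq:polynomial} is a small perturbation of the Marchenko--Pastur cubic $1+(z+1-\tfrac1d)m+zm^2=0$ by the term $\tfrac{s^{(4)}}{q^2}m^2(zm+1-\tfrac1d)^2$, so a perturbative/implicit-function analysis of its discriminant yields the algebraic function $\widetilde m$, the measure $\widetilde\rho$, and in particular the expansions \eqref{eq:L_+} of the edges $L_\pm$ — one sets the discriminant to zero and expands in powers of $q^{-2}$, using that at leading order the square-root edge of $m_{\mathrm{MP}}$ is at $\lambda_\pm$.

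The heart of the proof is part (3), the local law, which I would obtain from a \emph{recursive moment estimate}. Writing $P \equiv P_z(m^{X^\dagger X})$, the goal is to show $\mathbb{E}|P|^{2p} \lesssim (\text{error})\cdot \mathbb{E}|P|^{2p-1} + \dots$ for arbitrary $p$, where the error involves $\Psi := \sqrt{\tfrac{\operatorname{Im}\widetilde m}{N\eta}}+\tfrac1{N\eta}$ and the sparsity scale $q^{-2}$. This is produced by starting from $\mathbb{E}[\,\overline{P}^{p-1}P^p\cdot(1+(z+1-\tfrac1d)m^{X^\dagger X}+\dots)\,]$, using the identity $z m^{X^\dagger X} = -1 + \tfrac1N\sum X_{\alpha i}(X G^{X^\dagger X})_{i\alpha}$ type expansions obtained from the linearized resolvent, and then repeatedly applying the cumulant expansion (generalized Stein identity) in the entries $X_{\alpha i}$. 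Each derivative hitting a resolvent entry produces either a gain of $\Psi$ or, from the higher cumulants $\kappa^{(k)}$ with $k\ge 3$, a factor $q^{-(k-2)}$; the combinatorics is arranged so that the $k=4$ cumulant reproduces precisely the $\tfrac{s^{(4)}}{q^2}$-term already present in $P_z$, while all $k=3$ and $k\ge 5$ contributions are absorbed into the error because they come with extra factors of $q^{-1}$ or higher and odd-order terms vanish in expectation to leading order. Standard arguments (a priori bounds from the global law, $\operatorname{Im} m^{X^\dagger X}\prec \operatorname{Im}\widetilde m + \dots$, Schwarz and Young inequalities to close the recursion, then a continuity/bootstrap argument in $\eta$ from $\eta\sim 1$ down to the real axis, together with a stability analysis of $P_z$ near its roots) convert the moment bound into the high-probability estimate $|P_z(m^{X^\dagger X})|\prec q^{-4}+\Psi^2$, and the stability of the polynomial equation then gives \eqref{eq:locallaw}.

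The main obstacle I anticipate is the stability analysis of the polynomial equation $P_z(m)=0$ uniformly up to the spectral edge. Away from the edges $L_\pm$ the derivative $P_z'(\widetilde m)$ is bounded below, so $|m^{X^\dagger X}-\widetilde m|\lesssim |P_z(m^{X^\dagger X})|$, but near the edge $P_z'(\widetilde m)$ degenerates like $\sqrt{\kappa}$ (with $\kappa$ the distance to the edge) and one must instead use the quadratic term, obtaining $|m^{X^\dagger X}-\widetilde m|\lesssim |P_z(m^{X^\dagger X})|^{1/2}/(\kappa+\eta)^{1/4}$ or similar; controlling this self-consistently while $P$ itself is only known to be small requires a careful case distinction and a bound on $\operatorname{Im}\widetilde m$ in terms of $\sqrt{\kappa+\eta}$, all complicated by the fact that — unlike the Wigner case — $P_z$ is a genuine quartic with no symmetry about the spectral edge, so the left and right edges behave differently and the quartic's additional (spurious) roots must be shown to stay away from the contour. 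A secondary difficulty is bookkeeping in the cumulant expansion: the linearization doubles the index set, so one must track both `$\alpha$-type' and `$i$-type' resolvent entries and verify that the fluctuation averaging (the extra $\Psi$-gain when an off-diagonal resolvent entry is summed against an independent entry) still works in the block setting. These are adaptations of \cite{LeeSchnelli2016}, and the technical lemmas are deferred to the Supplement \cite{supplement}, but the sample-covariance structure genuinely complicates the edge stability, which is where I would expect to spend the most effort.
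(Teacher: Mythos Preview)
Your proposal is correct and follows essentially the same strategy as the paper: a recursive moment estimate for $\mathbb{E}|P(m)|^{2D}$ via the cumulant expansion on a linearized resolvent, Young's inequality to close the recursion, and a dichotomy-plus-continuity stability argument to handle the degeneracy of $P'_z(\widetilde m)\sim\sqrt{\varkappa+\eta}$ at the edge. Two technical details where the paper differs from what you wrote: (i) the linearization used is the asymmetric block matrix $H=\bigl(\begin{smallmatrix}-zI & X^\dagger\\ X & -I\end{smallmatrix}\bigr)$ with $G:=H^{-1}$, so that the upper-left block of $G$ is directly $(X^\dagger X - zI)^{-1}$ and no $\sqrt z$ change of spectral variable is needed; and (ii) the a~priori input is not merely the global law but the weak \emph{local} Marchenko--Pastur law of \cite{DingYang2017} (Proposition~\ref{prop26}), which already supplies the entrywise bounds $|G_{\mathfrak{i}\mathfrak{j}}-\Pi_{\mathfrak{i}\mathfrak{j}}|\prec q^{-1}+\Psi$ needed to start the bootstrap.
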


The sparsity of the entries of $X$ makes its eigenvalues follow the deterministic law $\widetilde{\rho}$ that depends on the sparsity parameter $q$. While this law approaches the Marchenko--Pastur law,~$\rho_{\mathrm{MP}}$, when $N\rightarrow\infty$, its deterministic refinement to the standard Marchenko--Pastur law for finite~$N$ accounts for the non-optimality at the edge of previous results obtained in \cite{DingYang2017}. (See \eqref{eq:localMP1} below for the estimate in \cite{DingYang2017}.)

From the local law \eqref{eq:locallaw}, we can also obtain estimates on the local density of states of $X^\dagger X$. For $E_1 <E_2$, define
\begin{align}
\mathbf{n}(E_1, E_2):=\frac{1}{N}|\{i:E_1<\lambda_i\leq E_2\}|, \quad n_{\widetilde{\rho}}(E_1, E_2):=\int^{E_2}_{E_1}\widetilde{\rho}(x) \di x.
\end{align}
Applying the Helffer-Sj\"ostrand calculus with Theorem \ref{thm:locallaw}, we get the following corollary:

\begin{corollary}\label{cor:localdensity}
Suppose that $X$ satisfies Assumption \ref{assume} with $\phi>0$. Let $E_1, E_2\in\mathbb{R}, E_1<E_2$. Then,
\begin{align}
|\mathbf{n}(E_1, E_2)-n_{\widetilde{\rho}}(E_1, E_2)|\prec\frac{E_2-E_1}{q^2} +\frac{1}{N}.
\end{align} 
\end{corollary}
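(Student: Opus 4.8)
The plan is to deduce Corollary~\ref{cor:localdensity} from the local law \eqref{eq:locallaw} in Theorem~\ref{thm:locallaw} by the by-now-standard route of the Helffer--Sj\"ostrand functional calculus, which turns a Stieltjes-transform estimate into a bound on the integrated density of states.

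First I would fix a small scale $\eta_0$ (it is enough to take $\eta_0=N^{-1}$, up to an $N^\epsilon$ that is harmless for $\prec$) and a width $r$ with $N^{-1}\le r\le E_2-E_1$, and introduce a smooth cutoff $f\equiv f_{E_1,E_2,r}$ with $f\equiv 1$ on $[E_1,E_2]$, $\operatorname{supp}f\subset[E_1-r,E_2+r]$, and $\|f^{(k)}\|_\infty\lesssim r^{-k}$. With a smooth even cutoff $\chi$, $\chi\equiv1$ on $[-1,1]$ and $\operatorname{supp}\chi\subset[-2,2]$, the Helffer--Sj\"ostrand formula gives
\begin{align}
\frac{1}{N}\sum_{i=1}^N f(\lambda_i)-\int_{\mathbb{R}}f(x)\,\widetilde{\rho}(x)\,\di x
 =\frac{1}{2\pi}\int_{\mathbb{R}^2}\Big(\ii y f''(x)\chi(y)+\ii\big(f(x)+\ii y f'(x)\big)\chi'(y)\Big)\big(m^{X^\dagger X}-\widetilde{m}\big)(x+\ii y)\,\di x\,\di y.
\end{align}
Using $m^{X^\dagger X}(\bar z)=\overline{m^{X^\dagger X}(z)}$ and the same identity for $\widetilde{m}$ I would reduce to $y>0$ and split the $y$-integral at $\eta_0$. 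On $\eta_0\le y\le2$ (and $x$ in the $r$-neighbourhood of $[E_1,E_2]$) we lie in the domain $\mathcal E$ --- except, for $d=1$, in the hard-edge region $x<\tfrac1{10}$, where Theorem~\ref{thm:locallaw} does not apply and one instead uses the coarser local Marchenko--Pastur law \eqref{eq:localMP1}, and where in addition both $\mathbf{n}$ and $n_{\widetilde{\rho}}$ can be controlled directly by the known rigidity of the smallest eigenvalues --- so \eqref{eq:locallaw} gives $|m^{X^\dagger X}-\widetilde{m}|(x+\ii y)\prec q^{-2}+(Ny)^{-1}$. The $\chi'$-term has $y\sim1$ on its support and contributes $\prec(E_2-E_1)\big(q^{-2}+N^{-1}\big)$. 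For the $f''$-term I would exploit that $\int f''=0$ separately over each of the two transition intervals: Taylor expanding $(m^{X^\dagger X}-\widetilde{m})(x+\ii y)$ around $E_1+\ii y$, respectively $E_2+\ii y$, the leading contributions combine into $\int_{E_1}^{E_2}\partial_x^2(m^{X^\dagger X}-\widetilde{m})(x+\ii y)\,\di x$, and the Cauchy estimate $|\partial_x^k(m^{X^\dagger X}-\widetilde{m})(x+\ii y)|\prec y^{-k}\big(q^{-2}+(Ny)^{-1}\big)$ then bounds the whole $f''$-contribution by $\prec(E_2-E_1)q^{-2}+N^{-1}$ after the $y$-integration over $[\eta_0,2]$. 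On $0<y<\eta_0$ I would use the trivial bound together with the monotonicity of $y\mapsto y\operatorname{Im}m^{X^\dagger X}(x+\ii y)$ (and likewise for $\widetilde{m}$) and one integration by parts in $y$, which contributes $\prec\eta_0 q^{-2}+N^{-1}$. Collecting the pieces gives $\big|\tfrac1N\sum_i f(\lambda_i)-\int f\,\widetilde{\rho}\big|\prec(E_2-E_1)q^{-2}+N^{-1}$.

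It remains to pass from the smoothed count to $\mathbf{n}(E_1,E_2)$. For this I would sandwich $\mathbbm{1}_{[E_1,E_2]}$ between two cutoffs of the above type whose transition regions of width $r$ lie just inside, respectively just outside, $[E_1,E_2]$; the difference between $\mathbf{n}(E_1,E_2)$ and either smoothed count is then bounded by $\tfrac1N\sum_i h(\lambda_i)$ for bump functions $h$ supported in the $O(r)$-neighbourhoods of $E_1$ and $E_2$, which by the same Helffer--Sj\"ostrand estimate is $\prec n_{\widetilde{\rho}}(O(r)\text{-nbhd})+rq^{-2}+N^{-1}$, and similarly $|n_{\widetilde{\rho}}(E_1,E_2)-\int f\,\widetilde{\rho}|\prec n_{\widetilde{\rho}}(O(r)\text{-nbhd})$. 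Away from the spectral edges $n_{\widetilde{\rho}}$ of an interval of length $O(r)$ is $O(r)$, so taking $r=N^{-1}$ closes the estimate; at the edges, where $\widetilde{\rho}$ has at most a square-root singularity (Section~\ref{sec:stieltjes}), one invokes instead the rigidity of the extremal eigenvalues. The main obstacle in making this rigorous is the analysis in the regime $y\lesssim N^{-1}$ and, relatedly, the choice of the scales $\eta_0,r$: there a naive use of the local law only yields $q^{-2}+N^{-1}$, and the sharper bound $(E_2-E_1)q^{-2}+N^{-1}$ really requires exploiting the cancellation $\int f''=0$ over each transition interval as above, which for short intervals is what prevents the two vertical portions of the contour near $E_1$ and $E_2$ from each producing a full $q^{-2}$.
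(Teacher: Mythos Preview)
Your overall approach---Helffer--Sj\"ostrand calculus together with the local law~\eqref{eq:locallaw}---is exactly what the paper indicates (the paper gives no further details beyond the sentence preceding the corollary). Your sketch is considerably more explicit than what the paper provides.

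There is, however, a genuine arithmetic gap in your treatment of the $f''$-term. After the cancellation you correctly identify that the leading contribution becomes $\int_{E_1}^{E_2}\partial_x^2(m^{X^\dagger X}-\widetilde m)(x+\ii y)\,\di x$, but the Cauchy bound $|\partial_x^2(m^{X^\dagger X}-\widetilde m)|\prec y^{-2}\big(q^{-2}+(Ny)^{-1}\big)$ does \emph{not} integrate to $(E_2-E_1)q^{-2}+N^{-1}$ over $y\in[\eta_0,2]$ with $\eta_0=N^{-1}$. For the $(Ny)^{-1}$ portion one gets
\[
\int_{\eta_0}^{2} y\cdot (E_2-E_1)\cdot y^{-2}(Ny)^{-1}\,\di y \;\sim\; (E_2-E_1)\,N^{-1}\eta_0^{-1}\;\sim\;E_2-E_1,
\]
which is far larger than $N^{-1}$ unless the interval is microscopically short. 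The same blow-up afflicts the second-order Taylor remainder. In other words, the cancellation trick gains the factor $E_2-E_1$ in front of $q^{-2}$ but simultaneously loses control of the $(N\eta)^{-1}$ part.

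A clean repair is to introduce a second splitting scale $y_*:=E_2-E_1$ in the $y$-integral. For $y\ge y_*$ use your cancellation argument; the $(Ny)^{-1}$ piece then integrates to $(E_2-E_1)N^{-1}y_*^{-1}=N^{-1}$, and the $q^{-2}$ piece to $(E_2-E_1)q^{-2}\log(2/y_*)\prec(E_2-E_1)q^{-2}$. For $\eta_0\le y<y_*$ do \emph{not} combine the two boundary contributions; bound each $\partial_x(m^{X^\dagger X}-\widetilde m)(E_j+\ii y)$ directly by $y^{-1}\big(q^{-2}+(Ny)^{-1}\big)$, which after multiplying by $y$ and integrating over $[\eta_0,y_*]$ gives $y_*\,q^{-2}+N^{-1}\log(y_*/\eta_0)\prec(E_2-E_1)q^{-2}+N^{-1}$. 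With this two-scale splitting your outline goes through.
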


We can obtain the following estimates of the operator norm $\|X^\dagger X\|$ of $X^{\dagger}X$, by combining the local law with the deterministic refinement to the Marchenko--Pastur law. This allows us to assert a strong statement on the location of the extremal eigenvalues of $X^\dagger X$.
\begin{theorem} \label{thm:norm}
Suppose that $X$ satisfies Assumption \ref{assume} with $\phi>0$. Then,
\begin{align}
| \|X^\dagger X\| -L_+| \prec \frac{1}{q^4}+\frac{1}{N^{2/3}},
\end{align}
where $L_+$ is the right endpoint of the support of the measure $\widetilde{\rho}$ given in \eqref{eq:L_+}.
\end{theorem}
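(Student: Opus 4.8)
The plan is to deduce Theorem~\ref{thm:norm} from the local law in Theorem~\ref{thm:locallaw} together with the structural properties of the deterministic measure $\widetilde\rho$, via a standard resolvent-bootstrap argument of the type pioneered in \cite{ErdosKnowlesYauYin2013}. The starting point is the observation that $\|X^\dagger X\|=\lambda_1$, so the claim amounts to showing that there is no eigenvalue above $L_++N^\epsilon(q^{-4}+N^{-2/3})$ with high probability, and that $\lambda_1$ is not far below $L_+$ either. I would first record the needed behavior of $\widetilde\rho$ near the right edge $L_+$: by the square-root edge behavior built into Theorem~\ref{thm:locallaw}(1), together with the explicit polynomial \eqref{eq:polynomial}, one has $\widetilde\rho(x)\sim c\sqrt{L_+-x}$ as $x\uparrow L_+$, and correspondingly the imaginary part of $\widetilde m$ behaves, for $z=E+\ii\eta$ near $L_+$, like the familiar square-root profile $\operatorname{Im}\widetilde m(z)\sim\sqrt{\kappa+\eta}$ for $E\le L_+$ and $\sim\eta/\sqrt{\kappa+\eta}$ for $E\ge L_+$, where $\kappa=|E-L_+|$. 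These are exactly the estimates one extracts from Section~\ref{sec:stieltjes}, and I would cite them rather than re-derive them.

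Next I would run the upper-bound half. Fix $E=L_++N^{2\epsilon}(q^{-4}+N^{-2/3})$ and $\eta=N^{-2/3-\epsilon}$ (or, in the regime where $q^{-4}$ dominates, $\eta$ chosen slightly below $q^{-6}$), so that $z=E+\ii\eta\in\mathcal E$. On one hand, the local law \eqref{eq:locallaw} gives $\operatorname{Im} m^{X^\dagger X}(z)\prec\operatorname{Im}\widetilde m(z)+q^{-2}+(N\eta)^{-1}$, and by the edge estimate just quoted the right-hand side is $\prec\eta/\sqrt{\kappa}+\cdots$, which at this choice of parameters is much smaller than $(N\eta)^{-1}$; so $\operatorname{Im} m^{X^\dagger X}(z)\prec(N\eta)^{-1}$. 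On the other hand, directly from the spectral decomposition,
\begin{align}
\operatorname{Im} m^{X^\dagger X}(z)=\frac{1}{N}\sum_{i=1}^N\frac{\eta}{(\lambda_i-E)^2+\eta^2}\ge\frac{1}{N}\cdot\frac{\eta}{(\lambda_1-E)^2+\eta^2}\mathbbm 1(\lambda_1>E).
\end{align}
If $\lambda_1>E+\eta$ were to hold, combining the two displays would force $(\lambda_1-E)^2+\eta^2\gtrsim N\eta\cdot N^{-\epsilon'}\cdot\eta$, which, tracking constants, contradicts the assumption for large $N$; hence $\lambda_1\le L_++N^{2\epsilon}(q^{-4}+N^{-2/3})$ with high probability. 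Since $\epsilon>0$ is arbitrary this is the desired one-sided bound.

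For the matching lower bound $\lambda_1\ge L_+-N^\epsilon(q^{-4}+N^{-2/3})$ I would use Corollary~\ref{cor:localdensity} together with the edge behavior of $\widetilde\rho$: taking $E_1=L_+-N^{2\epsilon}(q^{-4}+N^{-2/3})$ and $E_2=L_+$, the deterministic mass $n_{\widetilde\rho}(E_1,E_2)=\int_{E_1}^{L_+}\widetilde\rho(x)\,\di x\sim c(L_+-E_1)^{3/2}=c\,N^{3\epsilon}(q^{-4}+N^{-2/3})^{3/2}$, which is much larger than the error term $(E_2-E_1)q^{-2}+N^{-1}$ in the corollary, because $(q^{-4}+N^{-2/3})^{3/2}q^{-2}\cdot N^{2\epsilon}$ beats $N^{2\epsilon}(q^{-4}+N^{-2/3})q^{-2}$ and likewise beats $N^{-1}$ once one checks $N^{3\epsilon}(q^{-4}+N^{-2/3})^{3/2}\gg N^{-1}$ (which holds since $q\le N^{1/2}$ forces $q^{-4}\ge N^{-2}$, wait — more carefully, $(N^{-2/3})^{3/2}=N^{-1}$, so the $N^{-2/3}$ term alone contributes exactly at the threshold and the $N^{3\epsilon}$ prefactor wins). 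Thus $\mathbf n(E_1,L_+)>0$ with high probability, i.e., there is at least one eigenvalue in $(E_1,L_+]$, which in particular gives $\lambda_1\ge E_1$. Combining the two halves yields $|\lambda_1-L_+|\prec q^{-4}+N^{-2/3}$, which is the assertion.

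The main obstacle is the \emph{upper} bound, and specifically making sure that the choice of scales $\eta$ is legitimate throughout the whole range $N^\phi\le q\le N^{1/2}$: when $q$ is small the correction term $q^{-4}$ dominates $N^{-2/3}$ and one must push $\eta$ below $q^{-6}$ while still keeping $z\in\mathcal E$ and still having the local-law error $q^{-2}+(N\eta)^{-1}$ under control against $\operatorname{Im}\widetilde m$; one has to verify that $q^{-2}$ never overwhelms the argument, which uses $\phi>0$ and a short case analysis comparing $q^{-2}$, $(N\eta)^{-1}$ and the square-root profile. A secondary technical point is that the rigidity-type argument above only controls $\lambda_1$ through $\operatorname{Im} m^{X^\dagger X}$ just above the edge; to be fully rigorous one should either iterate the bound on a dyadic sequence of scales (a standard bootstrap) or invoke the already-established fact that $\widetilde\rho$ has no mass beyond $L_+$ together with a continuity/monotonicity argument. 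Both are routine given Theorem~\ref{thm:locallaw} and Section~\ref{sec:stieltjes}, so I would present the scale bookkeeping carefully and refer to the literature for the dyadic iteration.
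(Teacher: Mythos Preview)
Your lower-bound argument via Corollary~\ref{cor:localdensity} is sound and matches the paper's remark that the lower bound is a direct consequence of the local law. The upper-bound argument, however, has a genuine gap.

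The difficulty is that the local law~\eqref{eq:locallaw} as stated cannot rule out a single stray eigenvalue above $L_+$. Concretely: if $\lambda_1$ sits at some $E>L_+$, then $\Im m^{X^\dagger X}(E+\ii\eta)\ge (N\eta)^{-1}$, while the local law only gives
\[
\Im m^{X^\dagger X}(E+\ii\eta)\le \Im\widetilde m(E+\ii\eta)+N^{\epsilon'}\Big(\frac{1}{q^2}+\frac{1}{N\eta}\Big).
\]
The term $N^{\epsilon'}(N\eta)^{-1}$ on the right is at least as large as the $(N\eta)^{-1}$ on the left, so no contradiction is obtained. Your displayed inequality $(\lambda_1-E)^2+\eta^2\gtrsim N^{-\epsilon'}\eta^2$ is trivially true and says nothing about $\lambda_1-E$. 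Neither a dyadic bootstrap of~\eqref{eq:locallaw} nor the monotonicity of $\eta\mapsto\eta\Im m$ removes the $(N\eta)^{-1}$ error; that term is intrinsic to a black-box use of~\eqref{eq:locallaw}.

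What the paper does instead is precisely to avoid using~\eqref{eq:locallaw} as a black box for the upper bound. It returns to the first line of the recursive moment estimate~\eqref{eq:recursive3bis}, where the bound reads schematically $\mathbb{E}|P(m)|^{2D}\le C\,\mathbb{E}[\beta^{2D}(\alpha_1+\Lambda_t)^D(|\alpha_2|+\Lambda_t)^D]+\cdots$, and exploits that for $E>L_+$ one has $\alpha_1=\Im\widetilde m\sim\eta/\sqrt{\varkappa+\eta}$, which is \emph{much smaller} than $|\alpha_2|\sim\sqrt{\varkappa+\eta}$. Feeding this into the stability analysis yields an improved estimate of the type $\Lambda_t\prec (N\eta\sqrt{\varkappa+\eta})^{-1}+q_t^{-2}(\varkappa+\eta)^{-1/2}$ outside the spectrum, and it is this sharper bound (strictly smaller than $(N\eta)^{-1}$ once $\varkappa\gg q^{-4}+N^{-2/3}$) that produces the contradiction with the presence of an eigenvalue. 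In short, the missing ingredient in your proposal is an improved local law outside the support, and obtaining it requires reopening the moment/self-consistent-equation machinery rather than citing Theorem~\ref{thm:locallaw}.
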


\subsubsection{Tracy--Widom limit of the largest eigenvalues}
Our second main result shows that the rescaled largest eigenvalues of the sparse sample covariance matrices of sparse random matrix converge in distribution to Tracy--Widom law, if the sparsity parameter $q$ satisfies $q\gg N^{1/6}$.
\begin{theorem}\label{thm:TWlimit}
Suppose that $H$ satisfies Assumption \ref{assume} with $\phi>1/6.$ Denote by $\lambda_1^{X^\dagger X}$ the largest eigenvalue of $X^\dagger X$. Then
\begin{align}
\lim_{N\rightarrow \infty}\mathbb{P}\Big( \gamma N^{2/3}\big(\lambda_1^{X^\dagger X} - L_+\big)\leq s\Big) = F_1(s)
\end{align}
where $L_+$ is given in \eqref{eq:L_+}, $\gamma$ is a constant defined as
\begin{align}
\gamma = d^{1/2} (1+\sqrt{d})^{-4/3},
\end{align}
and $F_1$ is the cumulative distribution function of the GOE Tracy--Widom law.
\end{theorem}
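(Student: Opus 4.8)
The plan is to compare the edge statistics of the sparse sample covariance matrix $S = X^\dagger X$ with those of a reference real Wishart ensemble $S^{G} = (X^{G})^\dagger X^{G}$, whose largest eigenvalue is known to obey the GOE Tracy--Widom law (Johnstone \cite{Johnstone2001}), via the Green function comparison method. Since Theorem \ref{thm:TWlimit} predicts that the correct centering is $L_+$ rather than $\lambda_+$, the first step is to rephrase the problem entirely in terms of the deterministic refinement $\widetilde m$ and $\widetilde\rho$ from Theorem \ref{thm:locallaw}: I would establish a local law for $\widetilde m$ near its edge $L_+$ with the correct square-root behavior $\mathrm{Im}\,\widetilde m(E + \ii\eta) \sim \sqrt{\kappa + \eta}$ (where $\kappa = |E - L_+|$), which follows from the fact that $\widetilde m$ solves the quartic $P_z(\widetilde m) = 0$ and from the square-root edge of $\widetilde\rho$ asserted in part (1) of Theorem \ref{thm:locallaw}; the technical input on the polynomial $P$ is gathered in Section \ref{sec:stieltjes}. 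Combined with the local law \eqref{eq:locallaw} and the rigidity/norm estimate of Theorem \ref{thm:norm}, this gives optimal eigenvalue rigidity near $L_+$ on scales down to $N^{-2/3+\epsilon}$, which is the deterministic backbone for the comparison.

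Next I would set up the interpolating family. Rather than a Lindeberg swap entry by entry, I would use a continuous matrix flow $X_t$, $t \in [0,1]$, interpolating between $X_0 = X$ and $X_1$ a matrix with Gaussian entries (an Ornstein--Uhlenbeck-type flow, or a linear interpolation of the entries with variances kept fixed at $1/N$), so that the cumulants $\kappa^{(k)}_t$ for $k\geq 3$ decay along the flow; the sparsity parameter effectively increases, and at $t=1$ one has the Wishart ensemble. Because of the product structure $S = X^\dagger X$, I would first apply the linearization trick used throughout the paper, replacing $S - zI$ by the $(M+N)\times(M+N)$ block matrix $\begin{pmatrix} -I & X \\ X^\dagger & -zI\end{pmatrix}$ (or its symmetrized version), so that resolvent expansions become available in the linearized variable. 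The quantity to track is a smoothed linear statistic of the form $\mathbb{E}\, F\!\big(N \int f(x)\, \mathrm{Im}\, m^{X_t^\dagger X_t}(x + \ii\eta_0)\,\mathrm{d}x\big)$ with $\eta_0 = N^{-2/3-\epsilon}$ and $F$ smooth, which by standard arguments (Helffer--Sj\"ostrand plus the rigidity above) encodes $\mathbb{P}(\gamma N^{2/3}(\lambda_1 - L_+) \le s)$ up to negligible error; one must also verify that the deterministic edge $L_+(t)$ moves continuously and that re-centering at each $L_+(t)$ is consistent, which is where the explicit formula \eqref{eq:L_+} with its $s^{(4)}/q^2$ correction enters.

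The core estimate is then $\frac{\mathrm{d}}{\mathrm{d}t}\, \mathbb{E}\, F(\cdots) = o(1)$ uniformly in $t$, obtained by differentiating in $t$, applying the cumulant expansion (integration-by-parts formula for each entry) to the resolvent chains produced, and power-counting the resulting terms using the edge local law and rigidity. The terms involving $\kappa^{(3)}_t$ and $\kappa^{(4)}_t$ produce contributions of size $N \cdot \eta_0 \cdot q_t^{-1}$ and $N\cdot\eta_0\cdot q_t^{-2}$-type expressions after the self-improving resummation; here the sparsity hypothesis $\phi > 1/6$, i.e. $q \gg N^{1/6}$, is exactly what makes the leading $q^{-2}$ correction — which we have already subtracted by centering at $L_+$ instead of $\lambda_+$ — the only one of dangerous size, while all remaining terms are genuinely $o(1)$. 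The main obstacle, as the authors flag in the introduction, is precisely this power counting in the non-symmetric Marchenko--Pastur setting: the deterministic correction polynomial $P$ has a more complicated structure than in the Wigner case of \cite{LeeSchnelli2016}, so isolating which combinations of resolvent entries of the linearized matrix recombine into $\widetilde m$ (and verifying the needed cancellations against the subtracted $s^{(4)}/q^2$ shift) requires careful bookkeeping; I would handle it by proving a stability estimate for $P_z$ near the edge (a quantitative implicit-function statement) and feeding the optimal edge local law into each expansion term, deferring the routine but lengthy term-by-term analysis to the Supplement \cite{supplement} as the paper does for analogous Wigner-type estimates.
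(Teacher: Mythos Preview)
Your proposal is correct and follows essentially the same route as the paper: an Ornstein--Uhlenbeck flow $X_t$ towards the Wishart ensemble (the paper runs $t\in[0,6\log N]$ rather than $[0,1]$), the linearization trick, time-dependent re-centering at the moving edge $L_t$, and a cumulant expansion for $\frac{\mathrm d}{\mathrm dt}\mathbb{E}F(Y_t)$ in which the fourth-cumulant term $J_3$ of Lemma~\ref{lemma:J_r} cancels exactly against the $\dot{L}_t$ contribution produced by differentiating the spectral parameter $z=x+L_t+\ii\eta_0$. This dynamic cancellation is the precise mechanism behind your ``subtracted by centering at $L_+$'' remark, and together with the bounds $J_r=O(N^{2/3-\epsilon'})$ for $r\neq 3$ it gives $\frac{\mathrm d}{\mathrm dt}\mathbb{E}F(Y_t)=O(N^{-\epsilon'/2})$ and hence Proposition~\ref{prop:Greencomparison}.
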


In the regime $q \leq N^{1/3}$, the deterministic shift of the right endpoint $L_+$ is crucial in the analysis of the largest eigenvalue, since the shift is of order $q^{-2}$, which is larger than $N^{-2/3}$, the scale of the Tracy--Widom fluctuation. Thus, to use the Tracy--Widom fluctuation of the largest eigenvalue in this regime, the correction from the fourth cumulant should be taken into consideration.

We further remark that all results in this subsection also hold for complex sparse sample covariance matrices with the GUE Tracy--Widom law as the governing law for the limiting edge fluctuation.

\subsection{Outline of proofs} \label{subsec:outline}
In the proof of the local law, we apply the strategy of cumulant expansion, which was used for sparse Wigner-type matrices in \cite{LeeSchnelli2016}. For the basic ideas of the cumulant expansion method in the proof of the local law, we refer to Sections 3.1 and 3.2 in \cite{LeeSchnelli2016}, where the local law for the Gaussian Orthogonal Ensemble (GOE) matrices is proved with the method. For a GOE matrix $W$, it is based on the observation
\begin{align*}
1+zG^W_{ii} = \sum_{k=1}^N W_{ik} G^W_{ki}
\end{align*}
and the expectation of the right-hand side can be obtained by the Stein lemma. Since
\begin{align*}
\mathbb{E}[W_{ik} G^W_{ki}] = -\mathbb{E}[G^W_{kk} G^W_{ii}] - \mathbb{E}[G^W_{ki} G^W_{ki}],
\end{align*}
the method suggests to consider the polynomial $1+zm^W+(m^W)^2$. In case the matrix is sparse, we need to add a correction term to the polynomial, which can be obtained from the following generalized Stein lemma, introduced in \cite{Stein1981} and applied to the proof of CLT for the linear eigenvalue statistics of random matrices by Lytova and Pastur \cite{LytovaPastur2009}.
\begin{lemma}\label{lemma:Stein}
 Fix $\ell\in\mathbb{N}$ and let $F\in C^{\ell +1} (\mathbb{R}; \mathbb{C}^{+})$. Let $Y$ be a centered random variable with finite moments to order $\ell +2$. Then,
\begin{equation}
\mathbb{E}[YF(Y)] = \sum_{r=1}^{\ell} \frac{\kappa ^{(r+1)}(Y)}{r!} \mathbb{E}[F^{(r)}(Y)] + \mathbb{E}[\Omega_{\ell}(YF(Y))],
\end{equation}
where $\mathbb{E}$ denotes the expectation with respect to $Y$, $\kappa^{(r+1)}(Y)$ denotes the $(r+1)$-st cumulant of~$Y$ and $F^{(r)}$ denotes the $r$-th derivative of the function~$F$. The error term $ \Omega_{\ell}(YF(Y))$ satisfies
\begin{align}
\mathbb{E}[\Omega_{\ell}(YF(Y))]& \leq C_{\ell} \mathbb{E}[|Y|^{\ell +2}] \sup_{|t|\leq Q} |F^{(\ell +1)}(t)| + C_{\ell}\mathbb{E}[|Y|^{\ell +2} \mathds{1}(|Y|>Q) \sup_{t\in \mathbb{R}}|F^{(\ell +1)}(t)|,
\end{align}
where $Q>0$ is an arbitrary fixed cutoff and $C_{\ell}$ satisfies $C_{\ell}\leq(C\ell)^\ell / \ell !$ for some numerical constant~$C$.
\end{lemma}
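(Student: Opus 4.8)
The plan is to establish the formula first as an \emph{exact} identity for polynomials, and then to pass to general $F\in C^{\ell+1}$ by Taylor's theorem with integral remainder, the remainder being what becomes $\Omega_\ell$. The polynomial input is the statement that, for every polynomial $Q$ and every $Y$ with $\mathbb{E}[|Y|^{\deg Q+1}]<\infty$,
\begin{equation}
\mathbb{E}[YQ(Y)]=\sum_{r\ge 1}\frac{\kappa^{(r+1)}(Y)}{r!}\,\mathbb{E}\big[Q^{(r)}(Y)\big],
\end{equation}
where the sum terminates at $r=\deg Q$. By linearity in $Q$ it suffices to verify this for $Q(Y)=Y^{n-1}$, in which case it reduces to $\mathbb{E}[Y^{n}]=\sum_{s=2}^{n}\binom{n-1}{s-1}\kappa^{(s)}(Y)\,\mathbb{E}[Y^{n-s}]$; this is exactly the classical moment--cumulant recursion, with the $s=1$ term absent because $\kappa^{(1)}(Y)=\mathbb{E}[Y]=0$. (One may instead read it off the exponential generating identity $\mathbb{E}[\mathrm{e}^{\ii tY}]=\exp(\sum_{r\ge1}\kappa^{(r)}(Y)(\ii t)^r/r!)$.)

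Next I would expand $F$ and all of its derivatives about the common base point $0$. Let $P$ be the degree-$\ell$ Taylor polynomial of $F$ at $0$; then $F=P+\mathcal R_0$ and, differentiating $r$ times, $F^{(r)}=P^{(r)}+\mathcal R_r$ for $0\le r\le\ell$, with
\begin{equation}
\mathcal R_r(y)=\frac{y^{\,\ell+1-r}}{(\ell-r)!}\int_0^1(1-s)^{\ell-r}\,F^{(\ell+1)}(sy)\,\di s .
\end{equation}
Since $P^{(r)}\equiv 0$ for $r>\ell$, feeding $Q=P$ into the polynomial identity and then replacing $P^{(r)}$ by $F^{(r)}-\mathcal R_r$ produces exactly
\begin{equation}
\mathbb{E}[YF(Y)]=\sum_{r=1}^{\ell}\frac{\kappa^{(r+1)}(Y)}{r!}\,\mathbb{E}\big[F^{(r)}(Y)\big]+\mathbb{E}\big[\Omega_\ell(YF(Y))\big],
\end{equation}
with $\Omega_\ell(YF(Y)):=Y\mathcal R_0(Y)-\sum_{r=1}^{\ell}\tfrac{\kappa^{(r+1)}(Y)}{r!}\mathcal R_r(Y)$. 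Using the \emph{same} Taylor polynomial for $F$ and for every $F^{(r)}$ is what guarantees that each term of $\Omega_\ell$ contains only the top derivative $F^{(\ell+1)}$.

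It then remains to bound $\mathbb{E}|\Omega_\ell(YF(Y))|$. The integral remainder gives $|Y\mathcal R_0(Y)|\le \tfrac{|Y|^{\ell+2}}{(\ell+1)!}\sup_{|t|\le|Y|}|F^{(\ell+1)}(t)|$ and $|\mathcal R_r(Y)|\le \tfrac{|Y|^{\ell+1-r}}{(\ell-r+1)!}\sup_{|t|\le|Y|}|F^{(\ell+1)}(t)|$. Splitting each expectation over $\{|Y|\le Q\}$ and $\{|Y|>Q\}$ and bounding $\sup_{|t|\le|Y|}|F^{(\ell+1)}|$ by $\sup_{|t|\le Q}|F^{(\ell+1)}|$ on the first event and by $\sup_{\mathbb R}|F^{(\ell+1)}|$ on the second, one is left with the moment factors $|\kappa^{(r+1)}(Y)|\,\mathbb{E}[|Y|^{\ell+1-r}]$ and $|\kappa^{(r+1)}(Y)|\,\mathbb{E}[|Y|^{\ell+1-r}\mathds{1}(|Y|>Q)]$. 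Inserting $|\kappa^{(n)}(Y)|\le C^{n}n!\,\mathbb{E}|Y|^{n}$ (which follows from the partition formula for cumulants together with Lyapunov's inequality, the size-one blocks vanishing because $\mathbb{E}[Y]=0$), Lyapunov's inequality $\mathbb{E}[|Y|^{a}]\,\mathbb{E}[|Y|^{b}]\le\mathbb{E}[|Y|^{a+b}]$ turns the first factor into $\mathbb{E}|Y|^{\ell+2}$, while a variant of it — split the free moment at $Q$ as well and apply H\"older to its tail, at the cost of an absolute constant — turns the second into $\mathbb{E}[|Y|^{\ell+2}\mathds{1}(|Y|>Q)]$. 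Summing the resulting prefactors, of the order of $\sum_{r=1}^{\ell}C^{r+1}(r+1)/(\ell-r+1)!+1/(\ell+1)!$, gives a constant of the asserted form $C_\ell\le(C\ell)^\ell/\ell!$.

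The difficulty here is bookkeeping rather than conceptual: one must expand $F$ and all its derivatives about a single point so that $\Omega_\ell$ is controlled purely by $F^{(\ell+1)}$, and then extract the combinatorial constant in the sharp shape $(C\ell)^\ell/\ell!$, which forces a reasonably efficient bound of $|\kappa^{(n)}(Y)|$ by $\mathbb{E}|Y|^{n}$ rather than a wasteful one. The truncation at $Q$ is routine; the only mildly delicate point is that on $\{|Y|>Q\}$ a product of a full moment $\mathbb{E}|Y|^{r+1}$ and a truncated moment $\mathbb{E}[|Y|^{\ell+1-r}\mathds{1}(|Y|>Q)]$ is still dominated, up to a universal constant, by $\mathbb{E}[|Y|^{\ell+2}\mathds{1}(|Y|>Q)]$.
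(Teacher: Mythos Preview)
The paper does not actually prove Lemma~\ref{lemma:Stein}; it is quoted as a known tool, with attribution to Stein~\cite{Stein1981} and Lytova--Pastur~\cite{LytovaPastur2009}, so there is no ``paper's own proof'' to compare against. Your argument therefore has to stand on its own, and it does: the route via the exact polynomial identity (equivalently, the moment--cumulant recursion) followed by Taylor expansion with integral remainder is the standard and correct way to establish cumulant expansions of this type.

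Two small remarks. First, the step where you claim $\mathbb{E}|Y|^{r+1}\cdot\mathbb{E}\big[|Y|^{\ell+1-r}\mathds{1}(|Y|>Q)\big]\le C\,\mathbb{E}\big[|Y|^{\ell+2}\mathds{1}(|Y|>Q)\big]$ is correct but deserves one line: split $\mathbb{E}|Y|^{r+1}$ over $\{|Y|\le Q\}$ and $\{|Y|>Q\}$; on the first event use $|Y|^{r+1}\le Q^{r+1}$ and then $Q^{r+1}\le|Y|^{r+1}$ inside the tail expectation; on the second use H\"older with exponents $\tfrac{\ell+2}{r+1}$ and $\tfrac{\ell+2}{\ell+1-r}$. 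Second, the cumulant bound you invoke, $|\kappa^{(n)}(Y)|\le C^{n}n!\,\mathbb{E}|Y|^{n}$, is a bit crude as stated (the partition formula naively gives $(n-1)!$ times a Bell number), but any bound of the form $|\kappa^{(n)}(Y)|\le n^{n}\,\mathbb{E}|Y|^{n}$ already suffices to land inside $(C\ell)^\ell/\ell!$ after the final summation, so the conclusion is unaffected.
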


While the cumulant expansion method is applicable to the sample covariance matrices we consider, we face a new problem with the direct application of the method; since the Green function of $X^{\dag}X$ is $G^{X^{\dag}X}(z) = (X^{\dag}X-zI)^{-1}$, we are led to consider
\begin{equation*}
1+ zG^{X^{\dag}X}_{ii} = \sum_{k=1}^N (X^{\dag}X)_{ik} G^{X^{\dag}X}_{ki},
\end{equation*}
and the Stein lemma applied on the right-hand side proposes a cubic polynomial instead of the quadratic polynomial we use for the Wigner case. This makes the analysis significantly harder, especially when the correction term due to the sparsity is taken into consideration.

The difficulty is resolved by introducing the self-adjoint linearization of a sample covariance matrix $X^\dagger X$, which showed to be useful in the study of sample covariance matrices \emph{e.g.} in \cite{Girko1984,Anderson2013,LeeSchnelli2016,KnowlesYin2017,DingYang2017}. Let~$H$ be an $(N+M)\times (N+M)$ matrix such that 
\begin{align*}
H(X,z)\equiv H=THT+TH\overline{T}+\overline{T}HT+\overline{T}H\overline{T},
\end{align*}
where
\begin{align*}
THT:=-zI,\quad TH\overline{T}:=X^{\dagger}, \quad \overline{T}HT:=X, \quad \overline{T}H\overline{T}:=-I.
\end{align*}
Here $T$ is the projection on the first $N$ coordinates in $\mathbb{R}^{N+M}$ and $\overline{T}:=\mathds{1}-T$. In block matrix form, this is written as
\begin{align} \label{eq:linearization}
H= \left(
\begin{array} {c|c}
-zI & X^\dagger \\
\hline
X & -I
\end{array}
\right) .
\end{align}
Define the inverse matrix of $H(X, z)$ by $G:=H(X, z)^{-1}$. From the Schur complement formula, we find that
\begin{align}
G_{ab}(z)&=(TG(z)T)_{ab} = \big(X^{\dagger}X-zI\big)^{-1}_{ab}, \quad  1\leq a,b\leq N, \\
G_{\alpha\beta}(z)& = (\overline{T}G(z)\overline{T})_{\alpha\beta} = z\big(XX^{\dagger}-zI\big)^{-1}_{\alpha\beta},\quad N+1\leq \alpha,\beta\leq M+N.
\end{align}
Thus,
\begin{align}
\frac{1}{N}\sum^{N}_{a=1}G_{aa}= \frac{1}{N}\Tr \big(X^\dagger X-zI\big)^{-1} =m^{X^\dagger X}(z),
\end{align}
and
\begin{align}
\quad \frac{1}{N}\sum^{M+N}_{\alpha=N+1}G_{\alpha\alpha} &= \frac{1}{N}\Tr \Big(z\big(XX^{\dagger}-zI\big)^{-1}\Big) \notag \\
 &=zm^{X^\dagger X}(z)+\frac{1}{N}(N-M)=zm^{X^\dagger X}(z)+1-\frac{1}{d}.
\end{align}
We get last line from the fact that $XX^\dagger$ have $M$ eigenvalues identical to eigenvalues of $X^\dagger X$ and $(N-M)$ zero eigenvalues.

With the linearization trick, we can identify the quadratic polynomial that naturally arises from the sample covariance matrices, and we can also estimate the correction term as in \eqref{eq:polynomial}. With the quartic polynomial in \eqref{eq:polynomial}, we perform the analysis and prove the local law by the recursive moment method. The details for the recursive moment method are discussed in Appendix A of the Supplement~\cite{supplement}.

To prove the Tracy--Widom fluctuations of the largest eigenvalue, we consider the Dyson matrix flow with initial condition $X_0=X$ defined by
\begin{align}
X_t:=\mathrm{e}^{-t/2}X_0 +\sqrt{1-\mathrm{e}^{-t}}W^{G},\quad (t\geq 0),
\end{align}
where $W^{G}$ is an $M\times N$ matrix with i.i.d.\ Gaussian entries independent of $H_0$. The Dyson matrix flow is one of the key ideas in the proof of universality results in random matrix theory. For its application to the proof of Tracy--Widom limit, we refer to \cite{BourgadeErdosYau,LeeSchnelli2015,BourgadeHuangYau}.

In this work, we use the Dyson matrix flow in the Green function comparison in Section \ref{sec:TW} in conjunction with the linearization trick. Let $H_t :=H(X_t, z)$ be an $(N+M)\times (N+M)$ matrix defined as in \eqref{eq:linearization}. The local law can also be established for the normalized trace of the Green function of $H_t$ by considering
\begin{align}
G_t(z) = (H_t)^{-1}, \quad m_t(z) = \frac{1}{N} \sum_{i=1}^N (G_t)_{ii}(z),\quad (z\in\mathbb{C}^+).
\end{align}
Let $\kappa_t^{(k)}$ be the $k$-th cumulant of $(X_t)_{ij}$. Then, by the linearity of the cumulants under the addition of independent random variables, we have $\kappa_t^{(1)}=0, \kappa_t^{(2)}=1/N$ and $\kappa_t^{(k)}=\mathrm{e}^{-kt/2}\kappa^{(k)}$ for $k\geq 3$. In particular, we have the bound
\begin{align}
|\kappa_t^{(k)}| \leq \mathrm{e}^{-t}\frac{(Ck)^{ck}}{Nq_t^{k-2}} , \quad (k\geq 3),
\end{align}
where we introduced the time-dependent sparsity parameter
\begin{align}
q_t:= q\mathrm{e}^{t/2}.
\end{align}
We also let $s_t^{(k)}$ be the normalized cumulants defined by
\begin{equation}
s_t^{(k)} := N q_t^{k-2} \kappa_t^{(k)}.
\end{equation}

With the parameters depending on $t$, we generalize Theorem \ref{thm:norm} as follows: 

\begin{proposition} \label{prop:locallaw}
Let $X_0$ satisfy Assumption \ref{assume} with $\phi>0$. Then, for any $t\geq 0$, 
there exist deterministic numbers $L_t^+ \geq L_t^- \geq 0$ and an algebraic function $\widetilde{m}_t:\mathbb{C}^+ \rightarrow \mathbb{C}^+$ 
such that the following hold:
\begin{enumerate}
\item[(1)] The function $\widetilde{m}_t$ is the Stieltjes transform of a deterministic probability measure $\widetilde{\rho}_t$, i.e., $\widetilde{m}_t(z) = m_{\widetilde{\rho}_t}(z).$ The measure $\rho_t$ is supported on $[L_t^-, L_t^+]$ and $\widetilde{\rho}_t$ is absolutely continuous with respect to Lebesgue measure with a strictly positive density on $(L_t^-, L_t^+)$.
\item[(2)] The function $\widetilde{m}_t\equiv\widetilde{m}_t(z)$, $z\in\mathbb{C}^+$, is a solution to the polynomial equation
\begin{align}
P_z(\widetilde{m}_t):=1+(z+1-\frac{1}{d})\widetilde{m}_t+z\widetilde{m}_t^2+\frac{s_t^{(4)}}{q_t^2}\widetilde{m}_t^2(z\widetilde{m}_t+1-\frac{1}{d})^2 = 0.
\end{align}
\item[(3)] The normalized trace $m_t^{X^\dagger X}$ of the Green function of $X_t^\dagger X_t$ satisfies the local law
\begin{align}\label{eq:locallaw_t}
|m_t^{X^\dagger X}(z) -\widetilde{m}_t(z)| \prec \frac{1}{q_t^2}+\frac{1}{N\eta},
\end{align}
uniformly on the domain $\mathcal{E}$ and uniformly in $t\in [0, 6\log N]$.
\end{enumerate}
\end{proposition}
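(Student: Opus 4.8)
The starting point is that, for every fixed $t\ge0$, the matrix $X_t=\mathrm{e}^{-t/2}X_0+\sqrt{1-\mathrm{e}^{-t}}\,W^{G}$ is again the data matrix of a sparse sample covariance matrix: its entries are i.i.d.\ real with mean zero and variance $\mathrm{e}^{-t}N^{-1}+(1-\mathrm{e}^{-t})N^{-1}=N^{-1}$, and, by linearity of cumulants under addition of independent variables together with $\kappa^{(k)}(W^{G}_{\alpha i})=0$ for $k\ge3$, its $k$-th cumulant is $\kappa_t^{(k)}=\mathrm{e}^{-kt/2}\kappa^{(k)}$. The plan is therefore: (i) check that $X_t$ satisfies Assumption~\ref{assume} with the same exponent $\phi$ and sparsity parameter $\widetilde q_t:=\min\{q_t,N^{1/2}\}$; (ii) invoke Theorem~\ref{thm:locallaw} (and the deterministic analysis of Section~\ref{sec:stieltjes} and the recursive moment estimate behind it) for each fixed $t$; (iii) upgrade the resulting family of local laws to one valid uniformly in $t\in[0,6\log N]$.

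For (i): since $N^{\phi}\le q\le q_t$ and $\phi<1/2$ one has $N^{\phi}\le\widetilde q_t\le N^{1/2}$, and it remains to verify the moment bound \eqref{eq:moments} for $X_t$ with parameter $\widetilde q_t$. I would expand $\mathbb{E}|(X_t)_{\alpha i}|^{k}$ into a sum over set partitions of $\{1,\dots,k\}$: a partition with $m$ blocks (all of size $\ge2$, as the mean vanishes) contributes at most $(C'k)^{c'k}\,N^{-m}\widetilde q_t^{\,2m-k}$, and the inequality $\widetilde q_t\le N^{1/2}$ upgrades this to $(C'k)^{c'k}\,N^{-1}\widetilde q_t^{\,2-k}$; summing over the at most $k^{k}$ partitions and adjusting constants yields \eqref{eq:moments} for $X_t$. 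Step (ii) is then immediate: Theorem~\ref{thm:locallaw} applied to $X_t$ produces $L_t^{+}\ge L_t^{-}\ge0$ and an algebraic Stieltjes transform $\widetilde m_t=m_{\widetilde\rho_t}$ enjoying all the properties of item~(1); $\widetilde m_t$ is a root of the quartic of item~(2), whose coefficient $s_t^{(4)}/q_t^{2}$ equals $N\kappa_t^{(4)}=\mathrm{e}^{-2t}N\kappa^{(4)}$ and is hence unchanged whether the sparsity parameter is normalized by $q_t$ or by $\widetilde q_t$; and $|m_t^{X^\dagger X}(z)-\widetilde m_t(z)|\prec\widetilde q_t^{-2}+(N\eta)^{-1}$ uniformly on $\mathcal E$. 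If $q_t\le N^{1/2}$ this is exactly \eqref{eq:locallaw_t}; if $q_t>N^{1/2}$ the right-hand side equals $N^{-1}+(N\eta)^{-1}\le4(N\eta)^{-1}\prec q_t^{-2}+(N\eta)^{-1}$ on $\mathcal E$ (where $\eta<3$), so \eqref{eq:locallaw_t} holds in all cases. This gives (1)--(3) for each fixed $t$.

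For (iii): all bounds in the proof of Theorem~\ref{thm:locallaw} applied to $X_t$ are controlled in terms of $\widetilde q_t\in[N^{\phi},N^{1/2}]$ (through the uniform estimates $|\kappa_t^{(k)}|\le(Ck)^{ck}(N\widetilde q_t^{\,k-2})^{-1}$), and $\widetilde m_t$ depends explicitly on $t$ only through the coefficient $\mathrm{e}^{-2t}s^{(4)}/q^{2}$, so the implicit constants and $\prec$-bounds are uniform in $t\in[0,6\log N]$. To make \eqref{eq:locallaw_t} hold for all such $t$ simultaneously I would run the standard net argument: fix nets in $[0,6\log N]$ and in $\mathcal E$ of cardinality $\le N^{C}$ and mesh $\le N^{-C}$ for a large constant $C$, apply \eqref{eq:locallaw_t} at each net point and take a union bound, and then interpolate using the deterministic Lipschitz estimate $|\widetilde m_s(z)-\widetilde m_{s'}(z)|\lesssim|s-s'|$ (from Section~\ref{sec:stieltjes}) together with $\|X_s^\dagger X_s-X_{s'}^\dagger X_{s'}\|\prec|s-s'|^{1/2}$, which follows from the semigroup relation $X_s\overset{\mathrm d}{=}\mathrm{e}^{-(s-s')/2}X_{s'}+\sqrt{1-\mathrm{e}^{-(s-s')}}\,W'$ with $W'$ Gaussian and independent and $\|X_{s'}\|,\|W'\|\prec1$ (the former by Theorem~\ref{thm:norm}), hence $|m_s^{X^\dagger X}(z)-m_{s'}^{X^\dagger X}(z)|\le\eta^{-2}\|X_s^\dagger X_s-X_{s'}^\dagger X_{s'}\|\prec|s-s'|^{1/2}\eta^{-2}$. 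Choosing $C$ large renders these interpolation errors negligible on the part of $\mathcal E$ with $\eta\ge N^{-10}$, and the remaining tiny-$\eta$ part is disposed of exactly as in the proof of Theorem~\ref{thm:locallaw}.

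I do not expect a genuine obstacle, as the proposition is essentially a parametrized restatement of Theorems~\ref{thm:locallaw} and~\ref{thm:norm}; the two points that need a little care are the reduction to $\widetilde q_t=\min\{q_t,N^{1/2}\}$ when $q_t>N^{1/2}$ (where $X_t$ is \emph{less} sparse than a Wishart matrix, so one must not pretend its higher moments are as small as $\widetilde q_t^{\,2-k}N^{-1}$) and the continuity-in-$t$ step, where only H\"older-$\tfrac12$ continuity of $s\mapsto X_s$ is available because the generator $\partial_s X_s$ is singular at $s=0$; the latter is still comfortably enough against a polynomially fine net. The same reasoning applied to Theorem~\ref{thm:norm} yields the companion bound $|\,\|X_t^\dagger X_t\|-L_t^{+}\,|\prec\widetilde q_t^{-4}+N^{-2/3}$ uniformly in $t\in[0,6\log N]$, which is the form actually used in the Green function comparison of Section~\ref{sec:TW}.
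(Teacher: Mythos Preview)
Your approach is correct and follows the same arc as the paper's: establish the local law at each fixed $t$ and then pass to uniformity in $t\in[0,6\log N]$ via a lattice of mesh $N^{-C}$ together with continuity of the flow. The difference is only in framing. The paper runs the recursive moment estimate (Lemma~\ref{lemma:recursive}) and the stability analysis directly for $X_t$ with parameter $q_t$, without any upper restriction on $q_t$; Theorem~\ref{thm:locallaw} is then the special case $t=0$. You instead verify that $X_t$ satisfies Assumption~\ref{assume} with truncated parameter $\widetilde q_t=\min\{q_t,N^{1/2}\}$ and invoke Theorem~\ref{thm:locallaw} as a black box. Your observation that the polynomial $P_z$ depends on $t$ only through $s_t^{(4)}/q_t^{2}=N\kappa_t^{(4)}$ (hence is insensitive to whether one normalizes by $q_t$ or $\widetilde q_t$) is exactly what makes the black-box reduction clean, and your handling of the case $q_t>N^{1/2}$ via $N^{-1}\le 3(N\eta)^{-1}$ on $\mathcal E$ is fine.

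One point needs correcting. The distributional semigroup identity $X_s\overset{\mathrm d}{=}e^{-(s-s')/2}X_{s'}+\sqrt{1-e^{-(s-s')}}\,W'$ gives no pointwise control of $X_s-X_{s'}$ and hence cannot by itself yield $\|X_s^\dagger X_s-X_{s'}^\dagger X_{s'}\|\prec|s-s'|^{1/2}$, which is what the net argument requires. The H\"older-$\tfrac12$ bound you want follows instead from the explicit coupling used to \emph{define} the flow, $X_t=e^{-t/2}X_0+\sqrt{1-e^{-t}}\,W^G$ with $X_0$ and $W^G$ held fixed: then
\[
X_s-X_{s'}=(e^{-s/2}-e^{-s'/2})X_0+\bigl(\sqrt{1-e^{-s}}-\sqrt{1-e^{-s'}}\bigr)W^G,
\]
the first coefficient is $O(|s-s'|)$ and the second is $O(|s-s'|^{1/2})$ uniformly on $[0,6\log N]$ (the square root is the worst case, near $s=0$), and $\|X_0\|,\|W^G\|\prec1$ finishes the bound. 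This pointwise continuity of the Dyson flow is precisely what the paper invokes in its lattice step. Your claim that $\widetilde m_t$ is Lipschitz in $t$ is also slightly too strong near the edge (since $|P_{t,z}'(\widetilde m_t)|\sim\sqrt{\varkappa_t+\eta}$ there), but on $\{\eta\ge N^{-10}\}$ one still gets $|\partial_t\widetilde m_t|\le N^{C'}$, which is ample against a mesh of size $N^{-C}$ with $C$ large; you already account for this in your last sentence.
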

We remark that the local eigenvalue statistics of $X_t$ and $W^G$ agree up to negligible error for $t \geq 6\log N$. 

For simplicity, we let $L_t \equiv L_t^+$, the upper edge of the support of $\widetilde{\rho}_t$. In Section \ref{sec:stieltjes}, we show that 
\begin{align} \label{eq:L_t}
L_t =\Big(1+\frac{1}{\sqrt d}\Big)^2+\frac{1}{\sqrt d}\big(1+\frac{1}{\sqrt d}\big)^2\mathrm{e}^{-t}q_t^{-2}s^{(4)}+O(\mathrm{e}^{-2t}q_t^{-4}),
\end{align}
and also satisfies
\begin{align} \label{eq:L'_t}
\dot{L}_t = -\frac{2}{\sqrt d}\big(1+\frac{1}{\sqrt d}\big)^2\mathrm{e}^{-t}q_t^{-2}s^{(4)}+O(\mathrm{e}^{-2t}q_t^{-4}),
\end{align}
where $\dot{L}_t$ denotes the derivative of $L_t$ with respect to $t$. The actual proof of the Tracy--Widom fluctuation in Section \ref{sec:TW} will be done by comparing $\dot{L}_t$ and the time change of a suitable functional of the Green function.

In the proof of the local law, Theorem \ref{thm:locallaw}, we use the following results of \cite{DingYang2017} as a priori estimates.

\begin{proposition} (Lemma 3.11 of \cite{DingYang2017}) \label{prop26} Suppose $X$ satisfies Assumption \ref{assume} with $\phi>0$. Then,
\begin{enumerate}
\item[(1)] (Local Marchenko Pastur law) The following estimates hold uniformly for $z\in\mathcal{E}$:
\begin{equation}\label{eq:localMP1}
|m^{X^\dagger X}(z)-m_{\mathrm{MP}}(z)|\prec\min \Big\{\frac{1}{q}, \frac{1}{q^2 \sqrt{\kappa+\eta}}\Big\} + \frac{1}{N\eta},
\end{equation}
\begin{equation}\label{eq:localMP2}
\max_{\mathfrak{i},\mathfrak{j}}|G_{\mathfrak{i}\mathfrak{j}}(z)-\delta_{\mathfrak{i}\mathfrak{j}}\Pi_{\mathfrak{i}\mathfrak{j}}(z))|\prec \frac{1}{q}+\sqrt{\frac{\Im m_{\mathrm{MP}}(z)}{N\eta}} +\frac{1}{N\eta},
\end{equation}
where $\kappa\equiv\kappa(z):=|E-\lambda_+|,$ $z=E+\ii\eta$, and
\begin{align*}
\Pi(z)= \left(
\begin{array} {c|c}
m_{\mathrm{MP}}(z)I_{N\times N} & 0 \\
\hline
0 & -(1+m_{\mathrm{MP}}(z))^{-1}I_{M\times M}
\end{array}
\right) .
\end{align*}
\item[(2)] (Bound on $\|H \|$) There exists a constant $\Lambda>0$ such that
\begin{equation}
\|H \|\leq \Lambda .
\end{equation}
\item[(3)] (Delocalization) For the $\ell^2$-normalized eigenvectors $(\mathbf{u}_k^{X^\dagger X})$ and $(\mathbf{v}_{\alpha}^{XX^{\dagger}})$,
\begin{equation}\label{eq:deloc}
\max_{k}\|\mathbf{u}_k^{X^\dagger X}\|_{\infty} + \max_{\alpha} \|\mathbf{v}_{\alpha}^{XX^{\dagger}} \|_{\infty} \prec\frac{1}{\sqrt{N}}.
\end{equation}
\item[(4)] (Rigidity) Let $\gamma_j$ be the classical location of the $j$-th eigenvalue of $X^{\dagger}X$, i.e., $\gamma_j$ is defined by
\begin{align*}
N\int_{-\infty}^{\gamma_j}\rho_{\mathrm{MP}}(x)\di x=j, \quad 1\leq j\leq N.
\end{align*} If $\phi>1/3$, then
\begin{align} \label{rigidity}
|\lambda_j-\gamma_j|\prec j^{-1/3}N^{-2/3}+q^{-2},
\end{align}
for $\gamma_j\in[L_+-c, L_+]$ where $c$ is sufficiently small constant.
\end{enumerate}
\end{proposition}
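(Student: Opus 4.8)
The plan is to prove Proposition~\ref{prop26} by the standard self-consistent-equation (resolvent expansion) analysis for sample covariance matrices, carried out at the level of the self-adjoint matrix $H$ from \eqref{eq:linearization} and adapted to the sparse moment bounds \eqref{eq:moments}. The point of passing to $H$ is that $G=H^{-1}$ resolves $X^\dagger X$ and $XX^\dagger$ at once, and that its diagonal blocks should concentrate around the deterministic matrix $\Pi(z)$, whose scalar reduction is the Marchenko--Pastur self-consistent equation $1+(z+1-\tfrac{1}{d})m+zm^2=0$ with stable solution $m_{\mathrm{MP}}$. Throughout one works on the domain $\mathcal{E}$, which stays away from the hard edge at $0$.

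First I would derive approximate self-consistent equations for the entries $G_{aa}$ ($1\le a\le N$) and $G_{\alpha\alpha}$ ($N+1\le\alpha\le M+N$) via the Schur complement formula (or a single-resolvent cumulant expansion). The resulting error is a centered quadratic form in a column/row of $X$ against a minor of $G$; its concentration is controlled by a large-deviation bound, and with only the moments \eqref{eq:moments} available this bound carries, besides the usual $\sqrt{\Im m_{\mathrm{MP}}/(N\eta)}+(N\eta)^{-1}$ fluctuation, an extra $q^{-1}$ term --- the signature of the fourth moment being of order $1/(Nq^2)$ rather than $1/N^2$. This yields the entrywise estimates and the approximate scalar equation $|1+(z+1-\tfrac{1}{d})m^{X^\dagger X}+z(m^{X^\dagger X})^2|\prec q^{-1}+\sqrt{\Im m_{\mathrm{MP}}/(N\eta)}+(N\eta)^{-1}$ on a high-probability event. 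Next I would invoke the stability of the Marchenko--Pastur equation: an approximate solution with error $\delta$ obeys $|m-m_{\mathrm{MP}}|\lesssim\delta/\sqrt{\kappa+\eta}$ in the bulk and $|m-m_{\mathrm{MP}}|\lesssim\sqrt{\delta}$ near the edge, reflecting the square-root vanishing of $\rho_{\mathrm{MP}}$ at $\lambda_+$ (with $\kappa=|E-\lambda_+|$). A continuity (bootstrap) argument in $\eta$, started at large $\eta$ and decreased to the scale $N^{-1+\epsilon}$, then converts the approximate equation into \eqref{eq:localMP1}; a fluctuation-averaging step over the index $a$ is what improves the pointwise $q^{-1}$ error to $q^{-2}(\kappa+\eta)^{-1/2}$ in the bulk, while at the edge the $q^{-1}$ term survives because the stability constant blows up. The entrywise law \eqref{eq:localMP2} then follows by feeding \eqref{eq:localMP1} back into the entrywise equations.

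The remaining items are then standard. The crude bound (2) follows from a moment/net estimate using \eqref{eq:moments} (after a harmless truncation of atypically large entries, if needed), and is in any case a byproduct of the bootstrap since the local law confines the spectrum to a bounded set. For (3), using $\Im G_{aa}(E+\ii\eta)=\eta\sum_k|\mathbf{u}_k^{X^\dagger X}(a)|^2/((\lambda_k-E)^2+\eta^2)$ with $E=\lambda_k$ and $\eta=N^{-1+\epsilon}$, and inserting the entrywise bound on $\Im G_{aa}$, gives $|\mathbf{u}_k^{X^\dagger X}(a)|^2\prec N^{-1}$, and likewise for the $\mathbf{v}_\alpha^{XX^\dagger}$. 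For (4), integrating the local law against a Helffer--Sj\"ostrand test function (exactly as in the proof of Corollary~\ref{cor:localdensity}) controls $|\mathbf{n}(-\infty,E)-n_{\rho_{\mathrm{MP}}}(-\infty,E)|$; inverting the counting function and using the square-root edge behavior of $\rho_{\mathrm{MP}}$ produces the scale $j^{-1/3}N^{-2/3}$, and the $q^{-2}$ term in \eqref{rigidity} absorbs the displacement between $\rho_{\mathrm{MP}}$ and the true deterministic edge (cf.\ \eqref{eq:L_+}).

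The main obstacle I anticipate is the sparse large-deviation bookkeeping. Because \eqref{eq:moments} only supplies moments growing polynomially in the order, the high-moment estimates underpinning the high-probability statements must be tracked with care; and one must check that the various error contributions recombine to exactly $\min\{q^{-1},q^{-2}(\kappa+\eta)^{-1/2}\}+(N\eta)^{-1}$ uniformly over $\mathcal{E}$ --- in particular that the edge instability (the $1/\sqrt{\kappa+\eta}$ blow-up of the Marchenko--Pastur stability constant) is propagated through the $\eta$-bootstrap without spoiling the optimal $N^{-2/3}$ edge scale needed for (4).
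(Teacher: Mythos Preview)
The paper does not prove Proposition~\ref{prop26} at all: it is quoted verbatim as Lemma~3.11 of \cite{DingYang2017} and used as an a~priori input (``we use the following results of \cite{DingYang2017} as a priori estimates''). So there is nothing to compare your proposal against in this paper; what you have written is a sketch of how the cited result is obtained in \cite{DingYang2017} (and, ultimately, in the sparse local-law machinery of \cite{ErdosKnowlesYauYin2013,ErdosKnowlesYauYin2012}).

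Your outline is broadly correct as such a sketch. Two points are worth sharpening. First, the passage from the entrywise $q^{-1}$ error to the $q^{-2}(\kappa+\eta)^{-1/2}$ bound in \eqref{eq:localMP1} is not the usual fluctuation-averaging mechanism (which gains a factor $(N\eta)^{-1/2}$); rather, when one averages the self-consistent equations over $a$, the dominant $q^{-1}$ contribution --- coming from the diagonal part $\sum_\alpha(|X_{\alpha a}|^2-\tfrac1N)G^{(a)}_{\alpha\alpha}$ --- has mean zero and variance of order $q^{-2}$, so the \emph{averaged} self-consistent error is $O(q^{-2})$ directly. The stability factor $(\kappa+\eta)^{-1/2}$ then produces the second branch of the minimum. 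Second, for item~(2) you should not rely on the local law to bound $\|H\|$, since the bootstrap itself typically needs an a~priori deterministic bound on $|G_{\mathfrak{ij}}|$ to start; the moment/net argument you mention (or the weak local law at $\eta\sim 1$) is the right route, and this is logically prior to, not a consequence of, the entrywise law.
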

Note that the estimate \eqref{eq:localMP1} is essentially optimal as long as the spectral parameter $z$ stay away from the spectral edges, \emph{e.g.} for energies in the bulk $E\in[\lambda_- +\delta, \lambda_+-\delta]$ for some ($N$-independent) $\delta>0$. For the individual Green function entries $G_{i\alpha}$, we believe that the estimate \eqref{eq:localMP2} is already essentially optimal.

\begin{remark}[Notational remark 2]
We use Latin letters for indices in $[1, N]$, Greek letters for indices in $[N+1, M+N]$, and fraktur letters for indices ranging from $1$ to $M+N$. For Latin, respectively Greek,  indices, we abbreviate
\begin{align*}
\sum_{i} := \sum_{i=1}^N\;,\qquad \sum_{\alpha} := \sum_{\alpha=N+1}^{M+N}.
\end{align*}
For simplicity, we also let
\begin{align*}
\sum_{i,\alpha} := \sum_{\substack{1\leq i \leq N \\ N+1\leq \alpha \leq N+M}}\;,\qquad\sum_{\mathfrak{i}}:=\sum_{\mathfrak{i}=1}^{M+N}.
\end{align*}
\end{remark}

\section{Stieltjes transform of $\widetilde{\rho}$} \label{sec:stieltjes}

In this section, we prove several properties of $\widetilde{m}_t$ and its Stieltjes inversion $\widetilde\rho_t$.

\begin{lemma}\label{lemma:rho_t} For fixed $z=E+\ii\eta\in\mathcal{E}$ and $t\geq 0$, the equation $P_{t,z}(w_t)=0$ has a unique solution $w_t\equiv w_t(z)$ satisfying $\Im w_t>0$ and $|w_t|\leq \frac{6\lambda_+}{\lambda_-}$. Furthermore, $w_t$ satisfies the following properties:
\begin{enumerate}
\item[(1)] There exists a probability measure $\widetilde{\rho}_t$ such that the analytic continuation of $w_t(z)$ coincides with the Stieltjes transform, $m_{\widetilde{\rho}_t}(z)$, of $\widetilde{\rho}_t$.
\item[(2)] The probability measure $\widetilde{\rho}_t$ is supported on $[L_t^-, L_t^+]$ for some $L_t^- \geq 0$ and $L_t^+ \equiv L_t\geq (1+\sqrt{1/d})^2$, and it exhibits a square-root decay at the upper edge, $i.e.$
\begin{align}
\widetilde{\rho}_t(E)\sim\sqrt{L_t -E},\quad (E\in [L_t -\frac{1}{\sqrt d},L_t]).
\end{align}
Moreover,
\begin{align}
L_t = \Big(1+\frac{1}{\sqrt d}\Big)^2+\frac{1}{\sqrt d}\big(1+\frac{1}{\sqrt d}\big)^2\mathrm{e}^{-t}q_t^{-2}s^{(4)}+O(\mathrm{e}^{-2t}q_t^{-4}).
\end{align}
\item[(3)] Setting
\begin{align}
\varkappa_t\equiv\varkappa_t(E) :=\min\{|E+L_t|, |E-L_t|\},
\end{align}
the solution $w_t$ satisfies that
\begin{align}
|P_{t,z}'(w_t)| \sim \sqrt{\varkappa_t(E)+\eta}
\end{align}

and
\begin{align}
\Im w_t(E+\ii\eta)\sim\frac{\eta}{\sqrt{\varkappa_t(E)+\eta}} &\qquad\textrm{if }E > L_t \,, \notag \\
\Im w_t(E+\ii\eta)\sim\sqrt{\varkappa_t(E)+\eta} &\qquad\textrm{if } L_t -\frac{1}{\sqrt d} \leq E \leq L_t \,, \\
\Im w_t(E+\ii\eta) =O(1) &\qquad\textrm{if }E < L_t -\frac{1}{\sqrt d} \,. \notag 
\end{align}
\end{enumerate}
\end{lemma}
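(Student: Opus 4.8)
The plan is to treat the quartic polynomial equation $P_{t,z}(w)=0$ as a small (size $\mathrm{e}^{-t}q_t^{-2}$) perturbation of the Marchenko--Pastur self-consistent equation $1+(z+1-\tfrac1d)w+zw^2=0$, whose unique solution in $\mathbb{C}^+$ is $m_{\mathrm{MP}}(z)$. First I would establish the \emph{existence and uniqueness} of a root $w_t$ with $\Im w_t>0$ and $|w_t|\le 6\lambda_+/\lambda_-$: on the domain $\mathcal{E}$, the a priori bound $|m^{X^\dagger X}(z)-m_{\mathrm{MP}}(z)|\prec q^{-1}+(N\eta)^{-1}$ from Proposition~\ref{prop26}, together with $m^{X^\dagger X}$ satisfying an approximate version of $P_{t,z}=0$, shows that $P_{t,z}$ has a root near $m_{\mathrm{MP}}(z)$; quantitatively, since $m_{\mathrm{MP}}$ is bounded away from the other three roots of the quartic uniformly on $\mathcal{E}$ (the extra factor $\tfrac{s^{(4)}}{q_t^2}w^2(zw+1-\tfrac1d)^2$ perturbs the two MP roots by $O(\mathrm{e}^{-t}q_t^{-2})$ and sends the two spurious roots off to size $\sim q_t/\sqrt{|s^{(4)}|}$), a Rouché / implicit-function-theorem argument pins down a unique root $w_t$ inside the disc of radius $6\lambda_+/\lambda_-$ with positive imaginary part. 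Then $w_t$ is analytic in $z$ on $\mathbb{C}^+$ since $P'_{t,z}(w_t)\ne 0$ there (this non-degeneracy is exactly part (3), which I prove below), and $\ii\eta\, w_t(\ii\eta)\to -1$ as $\eta\to\infty$ by matching leading coefficients, so by the Stieltjes inversion theorem $w_t=m_{\widetilde\rho_t}$ for a probability measure $\widetilde\rho_t$; this gives part (1).

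For parts (2) and (3) I would pass to the functional inverse. Writing $z=z_t(w)$ by solving $P_{t,z}=0$ for $z$ — the equation is quadratic in $z$ once we collect terms, or one can simply invert perturbatively — one gets $z_t(w)=\tfrac{-1-(1-\tfrac1d)w-\tfrac{s^{(4)}}{q_t^2}w^2(1-\tfrac1d)^2+\cdots}{w+w^2+\cdots}$, a rational perturbation of the MP inverse $z_{\mathrm{MP}}(w)=-(1+w)(1+\tfrac{1}{dw})$ whose critical points are $w=\pm 1/\sqrt d$ (recovering $\lambda_\pm$). The edges $L_t^\pm$ are the images under $z_t$ of the two real critical points $w_t^\pm$ of $z_t$ near $\mp 1/\sqrt d$; perturbation theory gives $w_t^+ = -\tfrac{1}{\sqrt d} + O(\mathrm{e}^{-t}q_t^{-2})$ and hence, by Taylor-expanding $z_t$ around $-1/\sqrt d$ and tracking the $\tfrac{s^{(4)}}{q_t^2}$ term, the claimed formula $L_t=(1+\tfrac1{\sqrt d})^2+\tfrac1{\sqrt d}(1+\tfrac1{\sqrt d})^2\mathrm{e}^{-t}q_t^{-2}s^{(4)}+O(\mathrm{e}^{-2t}q_t^{-4})$; one also checks $z_t''(w_t^+)\ne 0$, i.e. the critical point is non-degenerate (again a perturbation of the MP value). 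The square-root edge behavior $\widetilde\rho_t(E)\sim\sqrt{L_t-E}$ and the three regimes for $\Im w_t$ then follow from a standard local analysis of the inverse function near a simple critical point: from $z-L_t \approx \tfrac12 z_t''(w_t^+)(w-w_t^+)^2$ we read off $w_t(z)-w_t^+\sim\sqrt{L_t-z}$, and separating $z=E+\ii\eta$ into $E>L_t$, $L_t-\tfrac1{\sqrt d}\le E\le L_t$, $E<L_t-\tfrac1{\sqrt d}$ gives the stated magnitudes of $\Im w_t$, while $|P'_{t,z}(w_t)|\sim|z_t'(w_t)|^{-1}\cdot|\text{(bounded factor)}|\sim\sqrt{\varkappa_t+\eta}$ because $z_t'$ vanishes linearly at the edge. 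The lower edge $L_t^-$ for $d>1$ is handled symmetrically via the other critical point $w_t^-\approx 1/\sqrt d$; for $d=1$ one has $\lambda_-=0$ and the left-edge analysis is not needed on $\mathcal{E}$.

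The main obstacle is \textbf{controlling the perturbation uniformly}, in two senses. First, uniformity over $t\in[0,6\log N]$: since $q_t=q\mathrm{e}^{t/2}$ can grow, the small parameter $\mathrm{e}^{-t}q_t^{-2}=\mathrm{e}^{-2t}q^{-2}$ actually \emph{shrinks} with $t$, so the perturbative estimates only improve — the real care is ensuring all implicit constants (e.g. the gap between $m_{\mathrm{MP}}$ and the spurious roots, the lower bound on $|z_t''(w_t^+)|$) are $t$-independent, which they are because the unperturbed MP objects are. Second, and more delicate, is making the edge expansion \emph{effective} near the critical point: one must verify that the quartic's discriminant factors as (square-root branch point at $L_t$) $\times$ (smooth part), with no collision between the edge critical point $w_t^+$ and either the spurious large roots or the other critical point $w_t^-$ — this is where the explicit bound $|w_t|\le 6\lambda_+/\lambda_-$ and the separation of scales ($|w_t^\pm|\sim 1$ versus spurious roots $\sim q_t$) are used to keep the Newton-polygon / Puiseux analysis under control. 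Once that separation is in hand, everything reduces to the classical square-root edge analysis for $m_{\mathrm{MP}}$ with explicit error terms, and the perturbation is routine.
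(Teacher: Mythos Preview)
Your strategy matches the paper's: solve $P_{t,z}(w)=0$ for $z$ as a function of $w$ (the paper writes $z=Q(w)$, obtained via the quadratic formula in~$z$), locate the real critical point $\tau_t$ of $Q$, Taylor-expand $z-L_t\approx\tfrac12 Q''(\tau_t)(w-\tau_t)^2$ to read off the square-root branch and the expansion of $L_t$, and use Rouch\'e's theorem on a bounded disc to count roots of $P_{t,z}$ against the MP quadratic. The relation $P'_{t,z}(w_t)=-\partial_z P\cdot Q'(w_t)$ together with $\partial_z P\sim 1$ (which the paper obtains from the lower bounds $|w_t|,|1+w_t|\ge c$) then yields part~(3); note, incidentally, that this gives $|P'_{t,z}(w_t)|\sim |z_t'(w_t)|$, not $|z_t'(w_t)|^{-1}$ as you wrote.

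Two concrete corrections are needed. First, your unperturbed inverse and its critical points are wrong: from $1+(z+1-\tfrac1d)w+zw^2=0$ one gets
\[
z_{\mathrm{MP}}(w)=-\frac{1+(1-\tfrac1d)w}{w(1+w)},\qquad z_{\mathrm{MP}}'(w)=\frac{1}{w^2}-\frac{1}{d(1+w)^2},
\]
so the relevant critical point in $(-1,0)$ is $\tau=-\dfrac{1}{1+1/\sqrt d}=m_{\mathrm{MP}}(\lambda_+)$, \emph{not} $-1/\sqrt d$. Carrying $w_t^+=-1/\sqrt d$ through the perturbation would not reproduce the stated formula for $L_t$, and for $d=1$ it even lands on the pole $w=-1$. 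Second, invoking the \emph{random} bound $|m^{X^\dagger X}-m_{\mathrm{MP}}|\prec q^{-1}+(N\eta)^{-1}$ from Proposition~\ref{prop26} to argue that the \emph{deterministic} polynomial $P_{t,z}$ has a root near $m_{\mathrm{MP}}$ is circular (this lemma is an input to the local law, not a consequence of it) and unnecessary: Rouch\'e applied directly on the circle $|w|=6\lambda_+/\lambda_-$ (respectively $|w|=10$ when $d=1$, since then $\lambda_-=0$ and your stated radius is infinite) already yields exactly two roots in the disc, and a continuity argument along the curve $z\mapsto w_t(z)$ for $z\in\partial\mathcal{E}\cap\mathbb{R}$ separates them into $\mathbb{C}^+$ and $\mathbb{C}^-$.
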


\begin{proof}
Assume first that $d > 1$. Recall the definition of $P \equiv P_{t,z}$,
\begin{align}
P(w) =1+(z+1-\frac{1}{d})w+zw^{2}+\frac{s^{(4)}\mathrm{e}^{-t}}{q_t^2}w^2(zw+1-\frac{1}{d})^2.
\end{align}
Solving the equation $P=0$ for $z$ by the quadratic formula, we get
\begin{align}
z &= Q(w)\equiv Q_{z,t}(w_t) \\
&= \frac{-\Big( 2\mathrm{e}^{-t}q_t^{-2}s^{(4)}(1-\frac{1}{d})w^2 + w + 1\Big) +\sqrt{-4\mathrm{e}^{-t}q_t^{-2}s^{(4)}w^2 \cdot\frac{1}{d}+(w+1)^2}}{2\mathrm{e}^{-t}q_t^{-2}s^{(4)}w^3}. \notag
\end{align}
For $w \sim 1$, its derivative is given by
\begin{align}
Q'(w) &= \frac{1}{2\mathrm{e}^{-t}q_t^{-2}s^{(4)}} \left( 2\mathrm{e}^{-t}q_t^{-2}s^{(4)}(1-\frac{1}{d}) w^{-2} + 2w^{-3} +3w^{-4} \right. \notag \\
&\qquad \qquad \qquad \quad \left. + \frac{8\mathrm{e}^{-t}q_t^{-2}s^{(4)} w^{-5} \cdot \frac{1}{d} - (w^{-3}+w^{-2})(2w^{-3}+3w^{-4})}{\sqrt{-4\mathrm{e}^{-t}q_t^{-2}s^{(4)} w^{-4} \cdot \frac{1}{d} + (w^{-3}+w^{-2})^2}} \right) \notag \\
&= \frac{1}{2\mathrm{e}^{-t}q_t^{-2}s^{(4)}} \bigg( 2w^{-3}+3w^{-4} 
-(2w^{-3}+3w^{-4}) \left[ 1- 4\mathrm{e}^{-t}q_t^{-2}s^{(4)} \frac{1}{d} \frac{w^2}{(1+w)^2} \right]^{-1/2} \\
&\qquad \qquad \qquad \quad + 2\mathrm{e}^{-t}q_t^{-2}s^{(4)}(1-\frac{1}{d}) w^{-2} + 8\mathrm{e}^{-t}q_t^{-2}s^{(4)} \frac{1}{w^2(1+w)} \cdot \frac{1}{d} \bigg) + O(q_t^{-2}) \notag \\
&= -\frac{2w+3}{w^2(1+w)^2} \cdot \frac{1}{d} + (1-\frac{1}{d}) w^{-2} + \frac{4}{w^2(1+w)} \cdot \frac{1}{d} + O(\mathrm{e}^{-t}q_t^{-2}). \notag
\end{align}
Let
\begin{align}
\widetilde{Q}'(w) = -\frac{2w+3}{(1+w)^2} \cdot \frac{1}{d} + (1-\frac{1}{d}) w^{-2} + \frac{2}{1+w} = \frac{1}{w^2} - \frac{1}{d} \frac{1}{(1+w)^2}.
\end{align}
Then, $\widetilde{Q}'(w)$ is independent of $q_t$ and strictly increasing on $(-1, 0)$. Furthermore,
\begin{align}
\widetilde{Q}'\left(-\frac{1}{1+\frac{1}{\sqrt d}} \right) = 0.
\end{align}
Thus, there exists a unique solution $w=\tau_t$ of the equation $Q'(w)=0$ in $(-1, 0)$, which satisfies
\begin{align}
\tau_t = -\frac{1}{1+\frac{1}{\sqrt d}} + O(\mathrm{e}^{-t}q_t^{-2}).
\end{align}
Let $L_t = Q(\tau_t)$. Calculating the terms of order $\mathrm{e}^{-t}q_t^{-2}$ precisely, we get  
\begin{align}
\tau_t &= \frac{1}{-1-\frac{1}{\sqrt d}}+O(\mathrm{e}^{-2t}q_t^{-4}), \\
 L_t &=\Big(1+\frac{1}{\sqrt d}\Big)^2+\frac{1}{\sqrt d}\big(1+\frac{1}{\sqrt d}\big)^2\mathrm{e}^{-t}q_t^{-2}s^{(4)}+O(\mathrm{e}^{-2t}q_t^{-4}). \notag
\end{align}

For simplicity, we let $L \equiv L_t$ and $\tau = \tau_t$. We now expand $z$ about $\tau$ to find that
\begin{align} \label{eq:sq-branch0}
z &= Q(\tau) + Q'(\tau)(w-\tau) +\frac{Q''(\tau)}{2}(w-\tau)^2 + O(|w-\tau|^3) \notag \\
&= L+ \frac{Q''(\tau)}{2}(w-\tau)^2 + O(|w-\tau|^3)
\end{align}
in a $q_t^{-1/2}$-neighborhood of $\tau$. Since 
\begin{align}
Q''(\tau) = \widetilde{Q}''(\tau) + O(\mathrm{e}^{-t}q_t^{-2})
\end{align}
and $\widetilde{Q}''(\tau) \sim 1$ where $\widetilde{Q}'$ is monotone increasing on $(-1, 0)$, we find that $Q''(\tau) > 0$. We hence find that
\begin{align} \label{eq:sq-branch}
w = \tau + \left(\frac{2}{Q''(\tau)}\right)^{1/2} \sqrt{z-L} + O(|z-L|)
\end{align}
in this neighborhood. Choosing the branch of the square root so that $\sqrt{z-L} \in \mathbb{C}^+$, we find that $\Im w > 0$. 

Let $B_0 := \{ w \in \mathbb{C} : |w| < \frac{6\lambda_+}{\lambda_-} \}$. For $z \in \mathcal{E}$ and $|w| = \frac{6\lambda_+}{\lambda_-}$,
\begin{align}
|zw^2| = \frac{|w|}{2}|zw| + \frac{|zw|}{2}|w| \geq 3|zw| + \lambda_+ |w| > 2 + |zw| + \left| 1-\frac{1}{d} \right| |w|
\end{align}
hence
\begin{align}
&\left| 1+(z+1-\frac{1}{d})w+zw^{2} \right| \geq |zw^2| - \left| (z+1-\frac{1}{d})w \right| -1 \\
&> 1 > \left| \frac{s^{(4)}\mathrm{e}^{-t}}{q_t^2}w^2(zw+1-\frac{1}{d})^2 \right|. \notag
\end{align}
Now, Rouch\'e's theorem implies that the polynomial $P(w)$ has the same number of roots as the quadratic polynomial $1+(z+1-\frac{1}{d})w+zw^{2} =0$ in $B_0$. Hence, we conclude that $P(w)=0$ has two solutions on $B_0$.

Let us extend $w \equiv w(z)$ to cover $z \in \partial \mathcal{E} \cap \mathbb{R}$. Then, $w$ forms a curve $w:\partial \mathcal{E} \cap \mathbb{R} \to \mathbb{C}$, which we will denote by $\Gamma$. We already know that $\Gamma$ intersects the real axis at $\tau$. Let $\widetilde{\tau}$ be the largest real number such that $\widetilde{\tau} < \tau$ and $\Gamma$ intersects the real axis at $\widetilde{\tau}$. Since
\begin{align}
1+(\widetilde{\tau}+1-\frac{1}{d})w(\widetilde{\tau})+\widetilde{\tau} w(\widetilde{\tau})^2 = O(\mathrm{e}^{-t}q_t^{-2}),
\end{align}
it can be easily checked from the quadratic formula 
\begin{align} \label{eq:quadratic_w}
w&= \frac{1}{2z} \left[ -(z+1-\frac{1}{d}) \right.  \left. + \sqrt{(z+1-\frac{1}{d})^2 -4z \left( 1+ \mathrm{e}^{-t}q_t^{-2}s^{(4)} w^2(zw+1-\frac{1}{d})^2 \right)} \right] \notag
\end{align}
that $m_{\mathrm{MP}}(\widetilde{\tau}) - w(\widetilde{\tau}) = O(\mathrm{e}^{-t}q_t^{-2})$ and also $|\widetilde{\tau} - \lambda_-| = O(\mathrm{e}^{-t/2}q_t^{-1})$, where $m_{\mathrm{MP}}$ is the Stieltjes transform of the  Marchenko--Pastur law; see~\eqref{le MP}. Since we chose the branch of the square root in \eqref{eq:sq-branch} so that $\sqrt{z-L} \in \mathbb{C}^+$, we find that the curve $\Gamma \in \mathbb{C}^+ \cup \mathbb{R}$, joining $\tau$ and $\widetilde{\tau}$. This shows that one solution of $P(w)=0$ is in $\mathbb{C}^+$. Choosing the other branch for the square root in \eqref{eq:sq-branch}, we can identify another solution of $P(w)=0$ in $\mathbb{C}^-$. Since there are only two solutions of $P(w)=0$ in $B_0$, this proves the uniqueness statement of the lemma.
Furthermore, by the analytic inverse function theorem, we also find that $w(z)$ is analytic for $z \in (w^{-1}(\widetilde{\tau}), L)$ since $Q'(w) \neq 0$ for such~$z$.

For a large but $N$-independent $z$, we can find from the quadratic formula \eqref{eq:quadratic_w} that $w(z) = -\frac{1}{z} + o(\frac{1}{z})$. By continuity, this shows that the analytic continuation of $w(z)$ for $z \in \mathbb{C}^+$ is in the domain $D_{\Gamma}$ enclosed by $\Gamma$ and the real axis. In particular, $|w(z)| < \frac{6\lambda_+}{\lambda_-}$ for all $z \in \mathbb{C}^+$.

We then prove the analyticity of $w(z)$ in $\mathbb{C}^+$. It suffices to show that $Q'(w) \neq 0$ for $w \in D_{\Gamma}$. If $Q'(w) = 0$ for some $w \in D_{\Gamma}$, we have
\begin{align}
0 = w^2 Q'(w) = 1 - \frac{1}{d} \frac{w^2}{(1+w)^2} + O(\mathrm{e}^{-t}q_t^{-2}).
\end{align}
We again use Rouch\'e's theorem. Since $d \geq 1$, we have $1 - \frac{1}{d} \frac{w^2}{(1+w)^2} > c \gg \mathrm{e}^{-t}q_t^{-2}$ for $|w|=\frac{6\lambda_+}{\lambda_-}$. Hence, $w^2 Q'(w) = 0$ has two solutions in the disk $B_0$. We already know that those solutions are $\tau$ and $\widetilde{\tau}$. Thus, $Q'(w) \neq 0$ for $w \in D_{\Gamma}$ and $w(z)$ is analytic.

Let $\widetilde{\rho} \equiv \widetilde{\rho}$ be the Stieltjes inversion of $w \equiv w(z)$. In order to show that $\widetilde{\rho}$ is a probability measure, it suffices to show that $\lim_{y \to \infty} \ii y \, w(\ii y) = -1$. By considering $z = \ii y$ in \eqref{eq:quadratic_w}, it can be easily checked. This proves the first part of the lemma.

The second part of the lemma is already proved in the previous computation in the proof. To prove the last part of the lemma, with a slight abuse of notation, let $P_t (w, z) = P_{t,z}(w)$. We notice that
\begin{align}
0 = \frac{\di}{\di w} P_t (w, z) = \frac{\partial z}{\partial w} \cdot \frac{\partial}{\partial z} P(w, z) + \frac{\partial}{\partial w} P(w, z).
\end{align}
From \eqref{eq:sq-branch0} and \eqref{eq:sq-branch}, we find that
\begin{align}
\frac{\partial z}{\partial w} \sim \sqrt{z-L} \sim \sqrt{\varkappa+\eta}.
\end{align}
We claim that $\frac{\partial}{\partial z} P(w, z) \sim 1$, which would prove the first relation in the last part of the lemma. Since
\begin{align}
\frac{\partial}{\partial z} P(w, z) = w+w^2 + \frac{2s^{(4)}\mathrm{e}^{-t} w}{q_t^2}w^2(zw+1-\frac{1}{d}),
\end{align}
it suffices to prove that 
\begin{align} \label{eq:stability}
|w|, |1+w|>c>0,
\end{align}
for some constant $c$ independent of $N$. If we assume $|w|\leq c$, then
\begin{align}
|P(w, z)| \geq 1- c \left| z+1 -\frac{1}{d} \right| - c^2 |z|-C q_t^{-2} > c,
\end{align}
for some (small) $c>0$, which contradicts that $P(w, z)=0$. Similarly, if we assume $|1+w| \leq c$, then
\begin{align}
|P(w, z)| \geq \frac{1}{d} - |1+w| - |zw| \cdot |1+w| - \frac{|1+w|}{d} - C q_t^{-2} > c,
\end{align}
for some (small) $c>0$. This proves that $\frac{\partial}{\partial z} P(w, z) \sim 1$, and we find that 
\begin{align}
P_{t, z}'(w) = \frac{\partial}{\partial w} P(w, z) =- \frac{\partial z}{\partial w} \cdot \frac{\partial}{\partial z} P(w, z) \sim \sqrt{\varkappa+\eta}.
\end{align} 
This proves the first relation in the last part of the lemma. Other relations in the last part of the lemma can be easily proved from the first property and \eqref{eq:sq-branch}.

If $d=1$, the polynomial $P(w)$ reduces to
\begin{align}
P(w) = 1+zw+zw^{2}+\mathrm{e}^{-t}q_t^{-2}s^{(4)} z^2 w^4
\end{align}
and
\begin{align}
Q(w) = \frac{-(w + 1) +\sqrt{-4\mathrm{e}^{-t}q_t^{-2}s^{(4)}w^2 +(w+1)^2}}{2\mathrm{e}^{-t}q_t^{-2}s^{(4)}w^3}.
\end{align}
The square root behavior does not change in this case, with $\tau_t = -\frac{1}{2} + O(\mathrm{e}^{-t}q_t^{-2})$. For uniqueness, we consider the disk $B_{10} = \{ w \in \mathbb{C}: |w| < 10 \}$. For $z \in \mathcal{E}$ and $w \in \partial B_{10}$, $|zw^2| \geq |zw| + 2$. Hence, we can use Rouch\'e's theorem and the uniqueness statement follows.

For the analyticity, if $Q'(w)=0$ then
\begin{align}
0 = w^2 Q'(w) = 1 - \frac{w^2}{(1+w)^2} + O(\mathrm{e}^{-t}q_t^{-2}).
\end{align}
In this case, the equation $1 - \frac{w^2}{(1+w)^2} = 0$ has only one solution $w = -\frac{1}{2}$. Thus, $w^2 Q'(w) = 0$ has only one solution in the disk $B_{10}$, and it proves the analyticity. The remaining parts can be proved with suitable changes.
\end{proof}

\section{Proof of local laws} \label{sec:local}

\subsection{Proof of Proposition \ref{prop:locallaw}} 
In this subsection, we prove Proposition \ref{prop:locallaw}. The following lemma provides the main tool of the proof, which is the recursive moment estimate for $P(m_t)$. It is analogous to Lemma~5.1 in~\cite{LeeSchnelli2016} for sparse Wigner matrices.
\begin{lemma}{(Recursive moment estimate)} \label{lemma:recursive} Fix $\phi>0$ and $t\geq 0$. Let $X_0$ satisfies Assumption \ref{assume}. Then, for any $D>10$ and small $\epsilon>0$, the normalized trace of the Green function, $m_t\equiv m_t(z)$, of the matrix $H_t$ satisfies
\begin{align}\label{eq:recursive}
&\mathbb{E}|P(m_t)|^{2D}\leq N^{\epsilon}\mathbb{E}\Big[\Big(\frac{1}{q_t^4} +\frac{\Im m_t}{N\eta} + \frac{N-M}{N^2}\Big)|P(m_t)|^{2D-1}\Big] \\
&\qquad + N^{-\epsilon/4}q_t^{-1}\mathbb{E}\Big[|m_t -\widetilde{m}_t|^2|P(m_t)|^{2D-1}\Big] +N^{\epsilon}q_t^{-8D}  \notag \\
&\qquad +N^{\epsilon} q_t^{-1}\sum_{s=2}^{2D}\sum_{u'=0}^{s-2}\mathbb{E}\Big[\Big(\frac{\Im m_t}{N\eta}+\frac{N-M}{N^2}\Big)^{2s-u'-2}|P'(m_t)|^{u'}|P(m_t)|^{2D-s}\Big] \notag \\
&\qquad +N^{\epsilon}\sum_{s=2}^{2D}\mathbb{E}\Big[\Big(\frac{1}{N\eta}+\frac{1}{q_t}\Big(\frac{\Im m_t}{N\eta} + \frac{N-M}{N^2}\Big)^{1/2}+\frac{1}{q_t^2}\Big) \Big(\frac{\Im m_t}{N\eta} + \frac{N-M}{N^2}\Big)^{s-1}|P'(m_t)|^{s-1}|P(m_t)|^{2D-s}\Big], \notag
\end{align}
uniformly on the domain $\mathcal{E}$, for any sufficiently large $N$.
\end{lemma}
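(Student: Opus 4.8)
The plan is to derive \eqref{eq:recursive} from a single high-order cumulant expansion of an unnormalized version of the moment and then reorganize. Abbreviate $m\equiv m_t$, $G\equiv G_t$, $P\equiv P_{t,z}$, and set $Q:=P(m)^{D-1}\overline{P(m)}^{D}$, so that $\mathbb{E}|P(m)|^{2D}=\mathbb{E}[P(m)\,Q]$. From the linearization, for $1\le i\le N$ one has $(H_tG_t)_{ii}=1$, and reading off the block form \eqref{eq:linearization} gives $1+zG_{ii}=\sum_{\alpha}(X_t)_{\alpha i}G_{\alpha i}$; averaging over $i$ yields $1+zm=\frac1N\sum_{i,\alpha}(X_t)_{\alpha i}G_{\alpha i}$. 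Hence
\[
P(m)=\frac1N\sum_{i,\alpha}(X_t)_{\alpha i}G_{\alpha i}\;+\;\Big(1-\tfrac1d\Big)m+zm^2+\frac{s_t^{(4)}}{q_t^2}\,m^2\Big(zm+1-\tfrac1d\Big)^2,
\]
and the first step is to expand $\mathbb{E}\!\big[\frac1N\sum_{i,\alpha}(X_t)_{\alpha i}G_{\alpha i}\,Q\big]$ by the generalized Stein lemma (Lemma~\ref{lemma:Stein}) with $Y=(X_t)_{\alpha i}$, $F(Y)=G_{\alpha i}\,Q$, carried to an order $\ell\sim 8D$ and with the cutoff in Lemma~\ref{lemma:Stein} chosen at scale $q_t^{-1}$.

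The second step is to identify the two algebraic cancellations. The $r=1$ term is $\kappa^{(2)}_t\sum_{i,\alpha}\mathbb{E}[\partial_{(X_t)_{\alpha i}}(G_{\alpha i}Q)]$; using $\partial_{(X_t)_{\alpha i}}G_{\alpha i}=-G_{\alpha i}^2-G_{\alpha\alpha}G_{ii}$ and $\kappa^{(2)}_t=1/N$, the fully diagonal part equals $-\mathbb{E}\big[\big(\tfrac1N\sum_\alpha G_{\alpha\alpha}\big)\big(\tfrac1N\sum_i G_{ii}\big)Q\big]=-\mathbb{E}[(zm+1-\tfrac1d)m\,Q]$, which cancels the explicit $(1-\tfrac1d)m+zm^2$ in $P(m)$ (the companion term with $G_{\alpha i}^2$, and all terms where a derivative hits $Q$, are errors). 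The $r=3$ term carries $\kappa^{(4)}_t=s_t^{(4)}/(Nq_t^2)$; among the terms of $\partial^3_{(X_t)_{\alpha i}}G_{\alpha i}$ the only one free of off-diagonal Green-function entries is $-6\,G_{\alpha\alpha}^2G_{ii}^2$, so invoking the a priori approximations $G_{ii}\approx m_{\mathrm{MP}}(z)$, $G_{\alpha\alpha}\approx-(1+m_{\mathrm{MP}}(z))^{-1}$ from \eqref{eq:localMP2} together with $\tfrac1N\sum_\alpha G_{\alpha\alpha}=zm+1-\tfrac1d$, one recovers up to admissible errors the monomial $-\frac{s_t^{(4)}}{q_t^2}m^2(zm+1-\tfrac1d)^2$, cancelling its explicit counterpart in $P(m)$. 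After these cancellations every surviving contribution is an error term.

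The third step is to bound those errors by the right-hand side of \eqref{eq:recursive}, using the a priori estimates of Proposition~\ref{prop26}: the entrywise bound \eqref{eq:localMP2} and the delocalization bound \eqref{eq:deloc} for the off-diagonal entries $G_{\alpha i}$; the Ward identity $\sum_\alpha|G_{\alpha i}|^2=\Re G_{ii}+\tfrac E\eta\Im G_{ii}$ (and the analogous identity for Latin indices), which turns sums of squares of off-diagonal entries into $\Im m/\eta$ plus $O(1)$; the bound $\|H_t\|\le\Lambda$; and the a priori closeness $|m-\widetilde m_t|\prec q_t^{-2}+(N\eta)^{-1}$. Powers are distributed by Young's and Hölder's inequalities to leave the factors $|P(m)|^{2D-s}$. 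Each derivative that falls on $Q$ pulls out a factor $P'(m)\,\partial_{(X_t)_{\alpha i}}m$ with $\partial_{(X_t)_{\alpha i}}m=-\tfrac2N(GTG)_{\alpha i}$ of order $N^{-1}$ times a Ward-controlled quantity; iterating this accounts for the double sums over $s$ and $u'$ and for the powers of $\big(\tfrac{\Im m}{N\eta}+\tfrac{N-M}{N^2}\big)$, the term $\tfrac{N-M}{N^2}$ reflecting the $M/N$ imbalance of the two index blocks. The truncation error of Lemma~\ref{lemma:Stein}, estimated through $\mathbb{E}|X_t|^{\ell+2}\le(C\ell)^{c\ell}(Nq_t^\ell)^{-1}$ and the deterministic bounds on $\sup|F^{(\ell+1)}|$, is absorbed into $N^\epsilon q_t^{-8D}$ once $\ell$ is chosen large enough.

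The main obstacle I anticipate is the third-cumulant ($r=2$) term. Its leading part is $\kappa^{(3)}_t$ (of size $(Nq_t)^{-1}$) times $\sum_{i,\alpha}\mathbb{E}[G_{\alpha i}G_{\alpha\alpha}G_{ii}Q]$ together with $\sum_{i,\alpha}\mathbb{E}[G_{\alpha i}^3Q]$; a crude Ward bound on these only gives a $q_t^{-2}$ contribution, which is too large for the $q_t^{-4}$ allowed in the leading term of \eqref{eq:recursive}. The resolution is to extract the stable combination: after replacing diagonal entries by their deterministic values one is left with a polynomial in $m$ which, because $P(\widetilde m_t)=0$ and by the stability $|P_{t,z}'(\widetilde m_t)|\sim\sqrt{\varkappa_t+\eta}$ from Lemma~\ref{lemma:rho_t}, is controlled by $|m-\widetilde m_t|^2$, so the whole contribution is dominated by the self-improving term $N^{-\epsilon/4}q_t^{-1}\mathbb{E}[|m-\widetilde m_t|^2|P(m)|^{2D-1}]$; this step may require a further cumulant expansion applied to the sub-leading pieces. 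Apart from this, the difficulty relative to the Wigner case of \cite{LeeSchnelli2016} is organizational: the two index blocks and the absence of a symmetry swapping $G_{ii}$ and $G_{\alpha\alpha}$ raise the degree of $P$ and multiply the number of terms at each order, so the bookkeeping of which Green-function factors are diagonal versus off-diagonal — which controls the power of $q_t^{-1}$ gained — must be carried out carefully. The full argument is deferred to Appendix~A of the Supplement~\cite{supplement}, following the scheme of \cite[Section~5]{LeeSchnelli2016}.
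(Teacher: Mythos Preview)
Your overall strategy is the paper's strategy: start from $1+zm=\frac1N\sum_{i,\alpha}(X_t)_{\alpha i}G_{\alpha i}$ via the linearization, apply the cumulant expansion (Lemma~\ref{lemma:Stein}) to $\mathbb{E}[(X_t)_{\alpha i}G_{\alpha i}Q]$ with $Q=P(m)^{D-1}\overline{P(m)}^{D}$, extract the quadratic part of $P$ from $r=1$ and the quartic correction from $r=3$, and bound everything else by Ward identities and the entrywise law~\eqref{eq:localMP2}. This is exactly the scheme the paper defers to the Supplement, modeled on~\cite[Section~5]{LeeSchnelli2016}. Two points in your sketch need correction.

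First, you invoke ``the a~priori closeness $|m-\widetilde m_t|\prec q_t^{-2}+(N\eta)^{-1}$.'' That is the local law~\eqref{eq:locallaw_t} you are trying to prove; the only a~priori input available is~\eqref{eq:localMP1}, which gives $|m-m_{\mathrm{MP}}|\prec q_t^{-1}+(N\eta)^{-1}$ and hence only $\Lambda_t\prec q_t^{-1}+(N\eta)^{-1}$. Fortunately the recursive moment estimate does not need more than this (and in fact the $|m-\widetilde m_t|^2$ term on the right of~\eqref{eq:recursive} is precisely the self-improving piece that later yields the sharper bound), but you must not use the $q_t^{-2}$ rate anywhere inside the proof of Lemma~\ref{lemma:recursive}.

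Second, your identification of the $r=3$ cancellation is too coarse. Your computation that the only fully diagonal piece of $\partial^3_{(X_t)_{\alpha i}}G_{\alpha i}$ is $-6G_{\alpha\alpha}^2G_{ii}^2$ is correct, but replacing $G_{ii}\approx m$ and $G_{\alpha\alpha}\approx -(1+m_{\mathrm{MP}})^{-1}$ and then summing does \emph{not} produce $-\frac{s_t^{(4)}}{q_t^2}m^2(zm+1-\tfrac1d)^2$. Using $(1+m_{\mathrm{MP}})^{-1}=-d(zm_{\mathrm{MP}}+1-\tfrac1d)$ (which follows from the Marchenko--Pastur equation) one finds
\[
-\frac{\kappa_t^{(4)}}{N}\sum_{i,\alpha}G_{\alpha\alpha}^2G_{ii}^2
\;\approx\;-\frac{s_t^{(4)}}{q_t^2}\,d\,m^2\Big(zm+1-\tfrac1d\Big)^2,
\]
an extra factor of $d$ compared to the quartic term in $P$. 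The discrepancy is of order $q_t^{-2}$, not $q_t^{-4}$, so it cannot be absorbed into the first error bracket of~\eqref{eq:recursive}. The paper's proof does not obtain the quartic term by a one-shot substitution of diagonal entries; one has to further expand one of the factors $G_{\alpha\alpha}$ (or $G_{ii}$) using the identity $G_{\alpha\alpha}=-1+\sum_j(X_t)_{\alpha j}G_{j\alpha}$ and re-apply the cumulant expansion, so that the surviving leading piece is expressed through the traces $\frac1N\sum_i G_{ii}=m$ and $\frac1N\sum_\alpha G_{\alpha\alpha}=zm+1-\tfrac1d$ rather than through the entrywise value $-(1+m_{\mathrm{MP}})^{-1}$. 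That second expansion is also what produces the additional error structures with $\frac{N-M}{N^2}$ in~\eqref{eq:recursive}. Your plan is right in spirit, but the $r=3$ step needs this extra layer before the cancellation with $P$ goes through for $d>1$.
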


We prove Lemma \ref{lemma:recursive} in Appendix A of the Supplement~\cite{supplement}. In the following, we sketch the proof of the local law in~Proposition~\ref{prop:locallaw}; the details are found in Appendix B of the Supplement~\cite{supplement}. The remaining parts of this section are partly adapted from Section 4 in~\cite{LeeSchnelli2016}, and we reproduce the argument here in a more structured and clear way.

 Let $\widetilde{m}_t$ be the solution $w_t$ in Lemma \ref{lemma:rho_t}. To simplify the notation, we introduce the following $z$- and $t$-dependent deterministic parameters
\begin{align}\label{le alphas}
\alpha_1(z) := \Im \widetilde{m}_t(z), \quad \alpha_2(z):=P'(\widetilde{m}_t(z)), \quad \beta:= \frac{1}{N\eta} + \frac{1}{q_t^2},
\end{align}
with $z=E+\ii\eta$. 
From Lemma \ref{lemma:rho_t}, we check that $\alpha_1\le C |\alpha_2|$. Further let 
\begin{align}\label{le Lambda_t}
\Lambda_t(z):=|m_t(z) - \widetilde{m}_t(z) |,\quad (z\in\mathbb{C}^+).
\end{align}
Note that from~Proposition~\ref{prop26} and~\eqref{eq:quadratic_w}, we have that $\Lambda_t(z)\prec 1$ uniformly on $\mathcal{E}$.

The strategy is now as follows. We apply Young's inequality to split up all the terms on the right side of~\eqref{eq:recursive} and absorb resulting factors of $\mathbb{E}|P(m_t)|^{2D}$ into the left hand side. For example, for the first term on the right of~\eqref{eq:recursive}, we get, upon using the notation in~\eqref{le Lambda_t}, that
\begin{align}
N^{\epsilon}\Big(&\frac{\Im m_t}{N\eta} +\frac{N-M}{N^2}+ q_t^{-4}\Big)|P(m_t)|^{2D-1}\\ &\leq  N^{\epsilon} \frac{\alpha_1 + \Lambda_t}{N\eta}|P(m_t)|^{2D-1}+ N^{\epsilon}  q_t^{-4}|P(m_t)|^{2D-1} \notag\\
&\leq \frac{N^{(2D+1)\epsilon}}{2D}C^{2D}\beta^{2D}(\alpha_1+\Lambda_t )^{2D} + \frac{N^{(2D+1)\epsilon}}{2D}q_t^{-8D}+\frac{2(2D-1)}{2D}N^{-\frac{\epsilon}{2D-1}}|P(m_t)|^{2D}, \notag 
\end{align}
and note that the last term can be absorbed into the left side of~\eqref{eq:recursive}. The same idea can be applied to the second term on the right side of~\eqref{eq:recursive}. To hand the other terms, we Taylor expand $P'(m_t)$ around $\widetilde{m}_t$ as
\begin{align}
|P'(m_t) - \alpha_2 -P''(\widetilde{m}_t)(m_t-\widetilde{m}_t)| \leq Cq_t^{-2}\Lambda_t^2,
\end{align}
where we used~\eqref{le alphas}. Noticing that $P''(\widetilde{m}_t)=2z+O(q_t^{-2})$ , we proceed in a similar way as above using Young's inequality. Skipping over some details, we eventually find
\begin{align}\label{eq:recursive3bis}
&\mathbb{E}[|P(m_t)|^{2D}]\\
&\leq CN^{(2D+1)\epsilon}\mathbb{E}[\beta^{2D}(\alpha_1 +\Lambda_t)^D(|\alpha_2|+15\Lambda_t)^D]+C\frac{N^{(2D+1)\epsilon}}{2D}q_t^{-8D} +C\frac{N^{-(D/4-1)\epsilon}}{2D}q_t^{-2D}\mathbb{E}[\Lambda_t^{4D}] \notag \\
&\leq N^{3D\epsilon}\beta^{2D}|\alpha_2|^{2D} +N^{3D\epsilon}\beta^{2D}\mathbb{E}[\Lambda_t^{2D}]+N^{3D\epsilon}q_t^{-8D}+N^{-D\epsilon/8}q_t^{-2D}\mathbb{E}[\Lambda_t^{4D}], \notag
\end{align}
uniformly on $\mathcal{E}$, where we used $\alpha_1\le C\alpha_2$ to get the second line. 

Next, we aim to control $\Lambda_t$ in terms of $|P(m_t)|$. For that we Taylor expand $P(m_t)$ around $\widetilde m_t$ to get
\begin{align} \label{eq:Taylorbis}
\Big|P(m_t)-\alpha_2(m_t-\widetilde{m}_t)-\frac{1}{2}P''(\widetilde{m}_t)(m_t-\widetilde{m}_t)^2\Big|\leq Cq_t^{-2}\Lambda_t^3,
\end{align}
since $P(\widetilde{m}_t)=0$ and $P'''(\widetilde{m}_t)=8\mathrm{e}^{-t}q_t^{-2}s^{(4)}(z^2 \widetilde{m}_t+z(z\widetilde{m}_t +1-\frac{1}{d}))$. Then using $\Lambda_t\prec 1$ and $P''(\widetilde{m}_t)=2z+O(q_t^{-2})$ we obtain
\begin{align}
\Lambda_t^2\prec 2|\alpha_2|\Lambda_t +2|P(m_t)|, \quad (z\in\mathcal{E}).
\end{align}
This estimate can upon applying a Schwarz inequality be fed back into~\eqref{eq:recursive3bis}, to get the bound
\begin{align}
 \mathbb{E}[|P(m_t)|^{2D}]&\leq N^{5D\epsilon}\beta^{2D}|\alpha_2|^{2D}+N^{5D\epsilon}\beta^{4D}+q_t^{-2D}|\alpha_2|^{4D},
\end{align}
 uniformly on $\mathcal{E}$. For any fixed $z\in\mathcal{E}$, Markov's inequality then yields $|P(m_t)|\prec|\alpha_2|\beta+\beta^2+q_t^{-1}|\alpha_2|^2$. Uniformity in $z$ is easily achieved using a lattice argument and the Lipschitz continuity of $m_t(z)$ and $\widetilde{m}_t(z)$ on $\mathcal{E}$. Finally, a Taylor expansion of $P(m_t)$ around $\widetilde{m}_t$ will give the following self-consistent equation for $m_t(z)-\widetilde{m}_t(z)$:
\begin{lemma}\label{le new lemma 1}
Under the assumptions of Lemma~\ref{lemma:recursive}, we have
\begin{align} \label{eq:TaylorPbis}
 |\alpha_2(m_t-\widetilde{m}_t)+z(m_t-\widetilde{m}_t)^2|\prec\beta\Lambda_t^2 +|\alpha_2|\beta+\beta^2+q_t^{-1}|\alpha_2|^2,
\end{align}
uniformly on $\mathcal{E}$.
\end{lemma}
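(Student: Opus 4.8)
The plan is to read off \eqref{eq:TaylorPbis} from the Taylor expansion \eqref{eq:Taylorbis} of $P(m_t)$ around $\widetilde m_t$ combined with the moment bound for $|P(m_t)|$ derived immediately before the statement. First I would use that $P(\widetilde m_t)=0$ and, by \eqref{le alphas}, $P'(\widetilde m_t)=\alpha_2$, so that \eqref{eq:Taylorbis} reads
\begin{align}
\Big|P(m_t)-\alpha_2(m_t-\widetilde m_t)-\tfrac12 P''(\widetilde m_t)(m_t-\widetilde m_t)^2\Big|\le C q_t^{-2}\Lambda_t^3 .
\end{align}
Next I would replace $\tfrac12 P''(\widetilde m_t)$ by $z$: the explicit form of $P$ gives $P''(\widetilde m_t)=2z+O(q_t^{-2})$, whence $\big|\tfrac12 P''(\widetilde m_t)(m_t-\widetilde m_t)^2-z(m_t-\widetilde m_t)^2\big|\le C q_t^{-2}\Lambda_t^2$. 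Putting the two estimates together with the triangle inequality yields
\begin{align}
\big|\alpha_2(m_t-\widetilde m_t)+z(m_t-\widetilde m_t)^2\big|\le |P(m_t)|+C q_t^{-2}\Lambda_t^3+C q_t^{-2}\Lambda_t^2 .
\end{align}

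It then remains to dispose of the three terms on the right. For the first I would invoke the bound $|P(m_t)|\prec|\alpha_2|\beta+\beta^2+q_t^{-1}|\alpha_2|^2$, established uniformly on $\mathcal E$ (by the lattice plus Lipschitz-continuity argument for $m_t$ and $\widetilde m_t$) in the paragraph preceding the statement. For the remaining two I would use $\Lambda_t\prec 1$ — which follows from Proposition \ref{prop26} and \eqref{eq:quadratic_w} — together with the stability of stochastic domination under products, to get $q_t^{-2}\Lambda_t^3\prec q_t^{-2}\Lambda_t^2$; since $q_t^{-2}\le\beta$ by the definition of $\beta$ in \eqref{le alphas}, both error terms are bounded by $\beta\Lambda_t^2$. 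Collecting everything gives precisely \eqref{eq:TaylorPbis}, uniformly on $\mathcal E$.

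The argument is short and is essentially bookkeeping built on pieces already assembled, so I do not anticipate a serious obstacle; the only points deserving care are the uniformity in $z\in\mathcal E$ (inherited from the corresponding statement for $|P(m_t)|$) and the elementary but not entirely deterministic manipulation $\Lambda_t^3\prec\Lambda_t^2$, which must be justified via $\Lambda_t\prec 1$ rather than a pointwise bound. An alternative route, should one prefer not to quote the $|P(m_t)|$ estimate, would be to carry out the Young-inequality splitting directly on the recursive estimate of Lemma \ref{lemma:recursive} once more, but reusing the already-derived bound is cleaner.
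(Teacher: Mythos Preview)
Your proposal is correct and follows essentially the same route as the paper: the paper sketches exactly this argument in the paragraph preceding the lemma, combining the Taylor expansion~\eqref{eq:Taylorbis} with $P''(\widetilde m_t)=2z+O(q_t^{-2})$ and the already-established bound $|P(m_t)|\prec|\alpha_2|\beta+\beta^2+q_t^{-1}|\alpha_2|^2$, then absorbing the cubic and quadratic remainders into $\beta\Lambda_t^2$ via $q_t^{-2}\le\beta$ and $\Lambda_t\prec 1$. Your attention to the uniformity in $z$ and to the justification of $\Lambda_t^3\prec\Lambda_t^2$ through $\Lambda_t\prec 1$ is appropriate and matches what the detailed proof (deferred to the supplement) must contain.
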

The detailed proof of Lemma~\ref{le new lemma 1} is given in Appendix B of the Supplement~\cite{supplement}.

We remark that the last term on the right-hand side of~\eqref{eq:TaylorPbis} is not yet optimal. To get a better estimate, we use the behavior of the $\alpha_2(z)$ to refine the analysis and obtain the following corollary.
\begin{corollary}
Under the assumptions of Lemma~\ref{lemma:recursive}, we have
\begin{align} \label{eq:TaylorPtre}
 |\alpha_2(m_t-\widetilde{m}_t)+z(m_t-\widetilde{m}_t)^2|\prec\beta\Lambda_t^2 +|\alpha_2|\beta+\beta^2,
\end{align}
uniformly on $\mathcal{E}$.

\end{corollary}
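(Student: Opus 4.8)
The plan is to delete the term $q_t^{-1}|\alpha_2|^2$ from the right side of \eqref{eq:TaylorPbis}. First I observe, by inspecting the proof of Lemma~\ref{le new lemma 1}, that this term enters \emph{only} through the a priori bound $|P(m_t)|\prec|\alpha_2|\beta+\beta^2+q_t^{-1}|\alpha_2|^2$: expanding $P(m_t)$ to second order about $\widetilde m_t$ and using $P(\widetilde m_t)=0$, $P'(\widetilde m_t)=\alpha_2$, $P''(\widetilde m_t)=2z+O(q_t^{-2})$ as in \eqref{eq:Taylorbis} gives $|\alpha_2(m_t-\widetilde m_t)+z(m_t-\widetilde m_t)^2|\prec|P(m_t)|+q_t^{-2}\Lambda_t^2$, and since Lemma~\ref{lemma:rho_t} provides $|\widetilde m_t|\sim1$, $\alpha_1\le C|\alpha_2|$ and $|\alpha_2|\sim\sqrt{\varkappa_t+\eta}\le C$, the bound $\Lambda_t\prec|\alpha_2|+\beta$ of Lemma~\ref{le new lemma 1} makes the remainder harmless: $q_t^{-2}\Lambda_t^2\prec q_t^{-2}(|\alpha_2|^2+\beta^2)\lesssim|\alpha_2|\beta+\beta^2$, using $q_t^{-2}\le\beta$ and $|\alpha_2|\le C$. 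Hence it suffices to improve the a priori bound to $|P(m_t)|\prec|\alpha_2|\beta+\beta^2$, uniformly on $\mathcal{E}$.

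To do this I will split $\mathcal{E}$ into $\mathcal{R}_1:=\{z\in\mathcal{E}:|\alpha_2(z)|\le q_t\beta(z)\}$ and $\mathcal{R}_2:=\mathcal{E}\setminus\mathcal{R}_1$. On $\mathcal{R}_1$ one has $q_t^{-1}|\alpha_2|^2\le|\alpha_2|\beta$, so \eqref{eq:TaylorPbis} already yields \eqref{eq:TaylorPtre} and nothing more is needed. On $\mathcal{R}_2$ one has $\beta<q_t^{-1}|\alpha_2|$, and feeding $\Lambda_t\prec|\alpha_2|+\beta\prec|\alpha_2|$ and $|\alpha_2|\le C$ into \eqref{eq:TaylorPbis} shows that its right side is $\prec q_t^{-1}|\alpha_2|^2$, i.e.\ $|\alpha_2(m_t-\widetilde m_t)+z(m_t-\widetilde m_t)^2|\prec q_t^{-1}|\alpha_2|^2$. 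Since $\widetilde m_t$ is the unique root of $P_{t,z}$ in $\mathbb{C}^+$ (Lemma~\ref{lemma:rho_t}), the usual stability/continuity-in-$\eta$ argument rules out that $m_t$ drifts toward the second root of $P_{t,z}$ and forces $m_t$ into the branch where the linear term dominates the quadratic one, so that $\Lambda_t\prec q_t^{-1}|\alpha_2|\ll|\alpha_2|$. Re-expanding $P(m_t)$ as in \eqref{eq:Taylorbis}, the quadratic term and the cubic remainder are then $o(|\alpha_2|\Lambda_t)$, and I obtain on $\mathcal{R}_2$ the improved relation $\Lambda_t^2\prec|P(m_t)|^2/|\alpha_2|^2$.

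I then feed this back into the recursive moment estimate \eqref{eq:recursive}. The only term there responsible for the $q_t^{-1}|\alpha_2|^2$ loss is the one on the second line, $N^{-\epsilon/4}q_t^{-1}\mathbb{E}[|m_t-\widetilde m_t|^2|P(m_t)|^{2D-1}]$; all remaining terms are controlled by $N^{CD\epsilon}\beta^{2D}(|\alpha_2|+\beta)^{2D}+N^{CD\epsilon}q_t^{-8D}$ once $\Lambda_t\prec|\alpha_2|+\beta$ is used, exactly as in the passage leading to \eqref{eq:recursive3bis}. On $\mathcal{R}_2$ I bound the offending term by $N^{-\epsilon/4}q_t^{-1}|\alpha_2|^{-2}\mathbb{E}[|P(m_t)|^{2D+1}]$, and stripping one power of $|P(m_t)|$ via the a priori estimate $|P(m_t)|\prec q_t^{-1}|\alpha_2|^2$ valid on $\mathcal{R}_2$, this becomes $N^{C\epsilon}q_t^{-2}\mathbb{E}[|P(m_t)|^{2D}]$ up to negligible tail terms, which for $\epsilon$ small enough that $N^{C\epsilon}q_t^{-2}\ll1$ is absorbed into the left side of \eqref{eq:recursive}. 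Running the argument of \eqref{eq:recursive3bis} again without this term gives $\mathbb{E}[|P(m_t)|^{2D}]\le N^{CD\epsilon}\beta^{2D}(|\alpha_2|+\beta)^{2D}+N^{CD\epsilon}q_t^{-8D}$, hence $|P(m_t)|\prec|\alpha_2|\beta+\beta^2$ on $\mathcal{R}_2$ by Markov's inequality. Combined with $\mathcal{R}_1$, this holds for every fixed $z\in\mathcal{E}$; re-inserting it into the Taylor expansion as in the first paragraph yields \eqref{eq:TaylorPtre} pointwise, and uniformity follows from the lattice argument used for Lemma~\ref{le new lemma 1} together with the Lipschitz continuity of $m_t$ and $\widetilde m_t$ on $\mathcal{E}$.

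The hard part will be the stability step on $\mathcal{R}_2$: showing that the smallness of $|\alpha_2(m_t-\widetilde m_t)+z(m_t-\widetilde m_t)^2|$ really pins $m_t$ to $\widetilde m_t+O(q_t^{-1}|\alpha_2|)$ rather than letting it approach the second, non-physical root of $P_{t,z}$, at which the quadratic and linear terms cancel. Near the upper edge, where $\alpha_1$ and $|\alpha_2|$ are both small, this seems to require the continuity-in-$\eta$ bootstrap together with the a priori bound $\Lambda_t\prec1$ from Proposition~\ref{prop26} and the branch structure established in Lemma~\ref{lemma:rho_t}. A secondary, more routine difficulty is bookkeeping when re-opening \eqref{eq:recursive}: one must verify that no other term there reintroduces a contribution of size $q_t^{-1}|\alpha_2|^2$ once $\Lambda_t\prec|\alpha_2|+\beta$ is available, and track the $N^{\epsilon}$ factors hidden in $\prec$ so that the constraint $N^{C\epsilon}q_t^{-2}\ll1$, hence $\epsilon$ small relative to $\phi$, is respected.
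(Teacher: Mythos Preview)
Your proposal is essentially the same as the paper's proof: both split $\mathcal{E}$ according to whether $|\alpha_2|\lessgtr q_t\beta$, note that on the small-$|\alpha_2|$ region the term $q_t^{-1}|\alpha_2|^2$ is already dominated by $|\alpha_2|\beta$, and on the large-$|\alpha_2|$ region run a dichotomy/continuity-in-$\eta$ argument (initialized via the a~priori local law of Proposition~\ref{prop26}) to upgrade \eqref{eq:TaylorPbis} to $\Lambda_t\prec q_t^{-1}|\alpha_2|$, then feed this back into the moment estimate. The only notable difference is in that last feedback step: the paper plugs $\Lambda_t\le 2N^{\epsilon}q_t^{-1}|\alpha_2|$ directly into the already-processed inequality \eqref{eq:recursive3bis}, so that the troublesome term $N^{-D\epsilon/8}q_t^{-2D}\mathbb{E}[\Lambda_t^{4D}]$ becomes $\lesssim q_t^{-6D}|\alpha_2|^{4D}\le (\beta|\alpha_2|)^{2D}+\beta^{4D}$; your route instead converts $\Lambda_t\prec q_t^{-1}|\alpha_2|$ into $\Lambda_t\prec |P(m_t)|/|\alpha_2|$, re-opens \eqref{eq:recursive}, and absorbs a factor $N^{C\epsilon}q_t^{-2}\mathbb{E}[|P(m_t)|^{2D}]$ into the left side. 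Both are correct, but the paper's path is shorter and avoids the extra bookkeeping you flag in your final paragraph.
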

\begin{proof}
Recall from Lemma~\ref{lemma:rho_t} that there is a constant $C_0>1$ such that $C_0^{-1}\sqrt{\varkappa_t(E)+\eta}\leq|\alpha_2|\leq C_0\sqrt{\varkappa_t(E)+\eta}$, where we can choose $C_0$ uniform in $z\in\mathcal{E}$. Note that, for a fixed $E$, $\beta=\beta(E+\ii\eta)$ is a decreasing function of $\eta$ whereas $\sqrt{\varkappa_t(E)+\eta}$ is increasing. Hence there is $\widetilde{\eta_0}\equiv\widetilde{\eta_0}(E)$ such that $\sqrt{\varkappa(E)+\widetilde{\eta_0}}=C_0q_t\beta(E+\ii\widetilde{\eta}_0)$. We consider the subdomain $\widetilde{\mathcal{E}}\subset\mathcal{E}$ defined by
\begin{align}
\widetilde{\mathcal{E}}:=\{z=E+\ii\eta\in\mathcal{E}:\eta>\widetilde{\eta}_0(E)\}.
\end{align}
On this subdomain $\widetilde{\mathcal{E}}$, $\beta\leq q_t^{-1}|\alpha_2|$, hence we get from \eqref{eq:TaylorPbis} that there is a high probability event $\widetilde{\Upxi}$ such that
\begin{align*}
|\alpha_2(m_t-\widetilde{m}_t)+z(m_t-\widetilde{m}_t)^2|\leq N^{\epsilon}\beta\Lambda_t^2+N^\epsilon q_t^{-1}|\alpha_2|^2
\end{align*}
and thus
\begin{align*}
|\alpha_2|\Lambda_t\leq (|z|+N^\epsilon \beta)\Lambda_t^2 +N^\epsilon q_t^{-1}|\alpha_2|^2
\end{align*}
uniformly on $\widetilde{\mathcal{E}}$ on $\widetilde{\Upxi}$. Hence, on $\widetilde{\Upxi}$,
\begin{align}
|\alpha_2|\leq 2(|z|+1)\Lambda_t\leq 12\Lambda_t\quad \textrm{or}\quad \Lambda_t\leq 2N^\epsilon q_t^{-1}|\alpha_2|,\quad(z\in\widetilde{\mathcal{E}}).
\end{align}
When $\eta=N^{-\epsilon}$, it is easy to see that
\begin{align}
|\alpha_2|\geq|z+1-\frac{1}{d}+2z\widetilde{m}_t|-Cq_t^{-2}\geq 2E\Im\widetilde{m}_t \geq c\sqrt{\eta}\gg 2N^\epsilon q_t^{-1}|\alpha_2|,
\end{align}
for some constant $c$ and sufficiently large $N$. We have that either $N^{-\epsilon/2}/12\leq \Lambda_t$ or $\Lambda_t\leq 2N^\epsilon q_t^{-1}|\alpha_2|$ on $\widetilde{\Upxi}$. From the a priori estimate \eqref{eq:localMP1}, we know that $|\Lambda_t|\prec \frac{1}{q_t}+\frac{1}{N\eta}$, we hence find that
\begin{align}\label{le dicho}
\Lambda_t\leq 2N^\epsilon q_t^{-1}|\alpha_2|,
\end{align}
holds on the event $\widetilde{\Upxi}$. Putting~\eqref{le dicho} back into~\eqref{eq:recursive3bis}, we obtain that
\begin{align}
\mathbb{E}[|P(m_t)|^{2D}]&\leq N^{4D\epsilon}\beta^{2D}|\alpha_2|^{2D} +N^{3D\epsilon}q_t^{-8D}+q_t^{-6D}|\alpha_2|^{4D} \notag \\
&\leq N^{6D\epsilon}\beta^{2D}|\alpha_2|^{2D}+N^{6D\epsilon}\beta^{4D},
\end{align}
for any small $\epsilon>0$, and large $D$, uniformly on $\widetilde{\mathcal{E}}$. Note that, for $z\in\mathcal{E}\backslash\widetilde{\mathcal{E}}$, it is direct to check the estimate $\mathbb{E}[|P(m_t)|^{2D}]\leq N^{6D\epsilon}\beta^{2D}|\alpha_2|^{2D}+N^{6D\epsilon}\beta^{4D}$. Applying a lattice argument and the Lipschitz continuity (see \emph{e.g.} the proof of Lemma~\ref{le new lemma 1}), we find from a union bound that for any small $\epsilon>0$ and large $D$ there exists an event $\Upxi$ with $\mathbb{P}(\Upxi)\geq 1-N^{-D}$ such that
\begin{align}\label{le upsi}
|\alpha_2(m_t-\widetilde{m}_t)+z(m_t-\widetilde{m}_t)^2|\leq N^{\epsilon}\beta\Lambda_{t}^{2}+N^{\epsilon}|\alpha_2|\beta+N^{\epsilon}\beta^2,
\end{align}
on $\Upxi$, uniformly on $\mathcal{E}$ for any sufficiently large $N$.
\end{proof}

We now prove Proposition \ref{prop:locallaw}, which will also imply Theorem \ref{thm:norm}.

\begin{proof}[Proof of Proposition \ref{prop:locallaw} and Theorem \ref{thm:norm}] Fix $t\in [0, 6 \log N]$. Let $\widetilde{m}_t$ be the solution $w_t$ in Lemma \ref{lemma:rho_t}. We proved statements $(1)$ and $(2)$ in Lemma~\ref{lemma:rho_t}, it hence remains to prove statement $(3)$ of Proposition~\ref{prop:locallaw}.

Recall that for fixed $E$ $\beta=\beta(E+\ii\eta)$ is a decreasing function of $\eta$, $\sqrt{\varkappa_{t}(E) +\eta}$ is an increasing function of $\eta$, and $\eta_0\equiv \eta_0(E)$ satisfies that $\sqrt{\varkappa(E)+\eta_0} = 10C_0 N^{\epsilon}\beta(E+\ii\eta_0)$. Further notice that $\eta_0(E)$ is a continuous function. We consider the subdomains of $\mathcal{E}$ defined by
\begin{align*}
\mathcal{E}_1&:=\{z=E+\ii\eta\in\mathcal{E} : \eta\leq \eta_0(E), 10N^{\epsilon}\leq N\eta\}, \\
\mathcal{E}_2&:=\{z=E+\ii\eta\in\mathcal{E} : \eta>\eta_0(E), 10N^{\epsilon}\leq N\eta\}.
\end{align*}
We consider the cases $z\in\mathcal{E}_1$, $z\in\mathcal{E}_2$ and $z\in\mathcal{E}\backslash(\mathcal{E}_1\cup\mathcal{E}_2)$, and split the stability analysis accordingly. Let $\Upxi$ be a high probability event such that~\eqref{le upsi} holds.

{\it Case 1:} If $z\in\mathcal{E}_1$, we note that $|\alpha_2|\leq C_0\sqrt{\varkappa(E)+\eta}\leq 10C_0^2 N^{\epsilon}\beta(E+\ii\eta)$. Then, we find that
\begin{align*}
|z|\Lambda_t^2 &\leq|\alpha_2|\Lambda_t + N^{\epsilon}\beta\Lambda_t^2 + N^{\epsilon}|\alpha_2|\beta + N^{\epsilon}\beta^2 \\ 
&\leq 10C_0^2 N^{\epsilon}\beta\Lambda_t + N^{\epsilon}\beta\Lambda_t^2 + (10C_0^2 N^{\epsilon}+1) N^{\epsilon}\beta^2,
\end{align*}
on $\Upxi$. Hence, there is some finite constant $C$ such that on $\Upxi$, we have $\Lambda_t\leq CN^{\epsilon}\beta$, $z\in\mathcal{E}_1$.

{\it Case 2:} If $z\in\mathcal{E}_2$, we obtain that
\begin{align}
|\alpha_2|\Lambda_t\leq (|z|+N^{\epsilon}\beta)\Lambda_t^2 + |\alpha_2|N^{\epsilon}\beta + N^{\epsilon}\beta^2,
\end{align}
on $\Upxi$. We then notice that $C_0|\alpha_2|\geq \sqrt{\varkappa_t(E)+\eta}\geq 10C_0 N^{\epsilon}\beta$, \emph{i.e.} $N^{\epsilon}\beta\leq |\alpha_2|/10$, so that
\begin{align}
|\alpha_2|\Lambda_t\leq (|z|+1)\Lambda_t^2 + (1+N^{-\epsilon})|\alpha_2|\beta,
\end{align}
on $\Upxi$, where we used that $N^{\epsilon}\beta\leq 1$. Hence, on $\Upxi$, either
\begin{align}
|\alpha_2|\leq 2(1+|z|)\Lambda_t \quad\textrm{or}\quad \Lambda_t\leq 3N^{\epsilon}\beta.
\end{align}
We now follow the dichotomy argument and the continuity argument used to obtain~\eqref{le dicho}. Since $3N^{\epsilon}\beta\leq |\alpha_2|/8$ on $\mathcal{E}_2$, by continuity, we find  that on the event $\Upxi$, $\Lambda_t\leq 3N^{\epsilon}\beta$ for $z\in\mathcal{E}_2$.

{\it Case 3:} For $z\in  \mathcal{E}\backslash(\mathcal{E}_1 \cup\mathcal{E}_2)$ we use that $|m'_t(z)|\leq \frac{\Im m_t(z)}{\Im z}, z\in\mathbb{C}^{+}$, since $m_t$ is a Stieltjes transform of a probability measure. Set $\widetilde{\eta}:=10N^{-1+\epsilon}$ and observe that
\begin{align}
|m_t(E+\ii\eta)|
&\leq \int_{\eta}^{\widetilde{\eta}}\frac{s\Im m_t(E+\ii s)}{s^2} \di s+ \Lambda_t(E+\ii\widetilde{\eta})+|\widetilde{m}_t(E+\ii \widetilde{\eta})|.
\end{align}
From the definition of the Stieltjes transform, it is easy to check that $s\rightarrow s\Im m_t(E+\ii s)$ is monotone increasing. Thus, we find that
\begin{align}
|m_t(E+\ii\eta)| &\leq\frac{2\widetilde{\eta}}{\eta}\Im m_t(E+\ii\widetilde{\eta})+\Lambda_t(E+\ii\widetilde{\eta}) + |\widetilde{m}_t(E+\ii\widetilde{\eta})| \notag \\
&\leq C\frac{N^{\epsilon}}{N\eta}\big(\Im \widetilde{m}_t(E+\ii\widetilde{\eta})+\Lambda_t(E+\ii\widetilde{\eta})\big) + |\widetilde{m}_t(E+\ii\widetilde{\eta})|,
\end{align}
for some $C$ where we used $\widetilde{\eta}=10N^{-1+\epsilon}$ to obtain the second inequality. Since $z=E+\ii\widetilde{\eta}\in\mathcal{E}_1 \cup \mathcal{E}_2$, we have $\Lambda_t(E+\ii\widetilde{\eta})\leq CN^{\epsilon}\beta(E+\ii\widetilde{\eta})\leq C$ on $\Upxi$. Using that $\widetilde{m}_t$ is uniformly bounded on $\mathcal{E}$, we get that, on $\Upxi$, $\Lambda_t\leq CN^{\epsilon}\beta$, for all $z\in \mathcal{E}\backslash(\mathcal{E}_1 \cup\mathcal{E}_2)$.

In sum, we get $\Lambda_t\prec\beta$ uniformly on $\mathcal{E}$ for fixed $t\in[0,6\log N]$. Choosing $t=0$, we have proved Theorem~\ref{thm:norm}. To prove that this bound holds for all $t\in[0,6\log N]$, we use the continuity of the Dyson matrix flow. Choosing a lattice $\mathcal{L}\subset[0,6\log N]$ with spacings of order $N^{-3}$, we find that $\Lambda_t\prec\beta$, uniformly on $\mathcal{E}$ and on $\mathcal{L}$, by a union bound. Thus, by continuity, we can extend the conclusion to all $t\in[0,6\log N]$ and conclude the proof of Proposition~\ref{prop:locallaw}.
\end{proof}

\subsection{Proof of Theorem \ref{thm:norm}} \label{sec6}
Theorem~\ref{thm:norm} is a direct consequence of the following result.
\begin{lemma}\label{lemma:norm}
Let $X_0$ satisfy Assumption \ref{assume} with $\phi>0$. Then,
\begin{align}\label{eq:normX'X}
\big| \|X_t^{\dagger}X_t \|-L_t \big|\prec\frac{1}{q_t^4} +\frac{1}{N^{2/3}},
\end{align}
uniformly in $t\in[0,6\log N]$.
\end{lemma}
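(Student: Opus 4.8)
The plan is to deduce the norm estimate from the local law in Proposition~\ref{prop:locallaw} by a standard resolvent/rigidity argument, working at a spectral scale just above the optimal one. First I would fix $t\in[0,6\log N]$ and set $z=E+\ii\eta$ with $E=L_t+\kappa$ for $\kappa>0$ and a carefully chosen $\eta$. From the square-root behavior of $\widetilde\rho_t$ near $L_t$ established in Lemma~\ref{lemma:rho_t}, we have $\Im\widetilde m_t(E+\ii\eta)\sim \eta/\sqrt{\varkappa_t(E)+\eta}$ for $E>L_t$; combining this with the local law $|m_t^{X^\dagger X}(z)-\widetilde m_t(z)|\prec q_t^{-2}+(N\eta)^{-1}$ from~\eqref{eq:locallaw_t} gives a bound on $\Im m_t^{X^\dagger X}(z)=\frac1N\sum_i\frac{\eta}{(\lambda_i-E)^2+\eta^2}$. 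Since every term in this sum is nonnegative, restricting to eigenvalues with $\lambda_i>E$ and using $\frac{\eta}{(\lambda_i-E)^2+\eta^2}\geq \frac{1}{2\eta}$ whenever $|\lambda_i-E|\leq\eta$ shows that there are no eigenvalues in the window $(E-\eta,E+\eta)$ once the right side is smaller than $\eta^{-1}/(2N)$, i.e.\ once $\eta\big(\Im\widetilde m_t+q_t^{-2}+(N\eta)^{-1}\big)\ll \eta^{-1}N^{-1}\cdot N\eta = N^{-1}$ — more precisely one wants $\Im m^{X^\dagger X}_t(z)< \frac{1}{2N\eta}$, which forces an eigenvalue-free strip and hence an upper bound on $\|X_t^\dagger X_t\|$.

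Quantitatively, I would choose $\kappa$ and $\eta$ so that $\eta\sim\kappa$ and balance the two error contributions: the contribution $\eta/\sqrt{\varkappa_t+\eta}\sim\sqrt{\kappa}$ from $\Im\widetilde m_t$ must be compared against $q_t^{-2}$ and against $(N\eta)^{-1}\sim(N\kappa)^{-1}$. Optimizing, the relevant scale is $\kappa\sim N^{-2/3}+q_t^{-4}$: at $\kappa\sim N^{-2/3}$ one has $\sqrt{\kappa}\sim N^{-1/3}$ and $(N\kappa)^{-1}\sim N^{-1/3}$, matching, while the $q_t^{-2}$ error is dominated provided $q_t^{-2}\lesssim N^{-1/3}$, and otherwise $\kappa\sim q_t^{-4}$ makes $\sqrt\kappa\sim q_t^{-2}$ the dominant term. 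Running the stochastic-domination bookkeeping with $N^\epsilon$ factors as in the usual $\prec$ calculus, this yields the upper bound $\|X_t^\dagger X_t\|\leq L_t+N^\epsilon(N^{-2/3}+q_t^{-4})$ with high probability. For the matching lower bound $\|X_t^\dagger X_t\|\geq L_t-N^\epsilon(N^{-2/3}+q_t^{-4})$, I would use the rigidity-type input together with the local density estimate (Corollary~\ref{cor:localdensity}, in its $t$-dependent form), or directly a lower bound on $\Im m^{X^\dagger X}_t$ on a strip just below $L_t$: since $\widetilde\rho_t$ has mass $\sim\kappa^{3/2}$ in $[L_t-\kappa,L_t]$, the local law forces at least one eigenvalue in $[L_t-\kappa,L_t]$ once $N\kappa^{3/2}\gg 1$ plus error terms, giving $\lambda_1^{X_t^\dagger X_t}\geq L_t-\kappa$ for $\kappa\sim N^{-2/3}+q_t^{-4}$ up to $N^\epsilon$.

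Finally, to promote the bound from a fixed $t$ to uniformity in $t\in[0,6\log N]$, I would use the continuity of the Dyson matrix flow: the map $t\mapsto X_t$ is Lipschitz on compact time intervals (with polynomial-in-$N$ Lipschitz constant), and $t\mapsto L_t$ is Lipschitz by~\eqref{eq:L'_t}, so running the above on a lattice $\mathcal L\subset[0,6\log N]$ with spacing $N^{-C}$ for large $C$, taking a union bound, and interpolating gives~\eqref{eq:normX'X} uniformly in $t$. The main obstacle I anticipate is not any single step in isolation but getting the two-sided matching at the \emph{sharp} scale $N^{-2/3}+q_t^{-4}$: the naive argument from the local law at scale $\eta$ only controls $\|X_t^\dagger X_t\|$ up to errors of order $\eta+(\text{error of }m_t\text{ at scale }\eta)$, and squeezing this down to $N^{-2/3}$ (rather than, say, $N^{-1/2}$) requires exploiting the square-root edge behavior of $\widetilde\rho_t$ precisely — essentially the same mechanism by which one upgrades a local law to optimal edge rigidity — and carefully tracking that the deterministic edge $L_t$, not $\lambda_+$, is the correct centering, which is exactly where the $q_t^{-4}$ term (the size of the subleading correction to $L_t$ beyond the $q_t^{-2}s^{(4)}$ shift) enters.
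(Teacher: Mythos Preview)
Your lower bound argument and the uniformity-in-$t$ step via a lattice plus continuity are both fine and match the paper's approach. The genuine gap is in the upper bound.

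The argument you sketch requires showing, for $E>L_t$ with $\kappa=E-L_t$ and some $\eta$, that $\Im m_t^{X^\dagger X}(E+\ii\eta)<\frac{1}{2N\eta}$ with high probability. But the only input you invoke is the local law~\eqref{eq:locallaw_t}, which gives
\[
\Im m_t^{X^\dagger X}(z)\leq N^{\epsilon}\Big(\frac{\eta}{\sqrt{\varkappa_t+\eta}}+\frac{1}{q_t^{2}}+\frac{1}{N\eta}\Big)
\]
on a high-probability event. The last term alone contributes $N^{\epsilon}/(N\eta)$, which is \emph{never} below $\frac{1}{2N\eta}$ for any $\epsilon>0$. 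This is the well-known obstruction: the error $(N\eta)^{-1}$ in a generic local law is exactly the threshold for detecting a single eigenvalue, so it cannot be used by itself to exclude eigenvalues at the optimal edge scale. The same obstruction blocks the alternative route through Corollary~\ref{cor:localdensity}, since the $N^{-1}$ error there (inherited from the local law via Helffer--Sj\"ostrand) again prevents concluding $\mathbf n(L_t,L_t+C)<1/N$. Your final paragraph acknowledges the difficulty but does not propose a mechanism to overcome it.

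The paper's resolution is different: for the upper bound it does \emph{not} rely on the local law~\eqref{eq:locallaw_t}, but goes back to the recursive moment estimate, specifically the first inequality in~\eqref{eq:recursive3bis}, and redoes the stability analysis using that $\alpha_1(z)=\Im\widetilde m_t(z)\sim\eta/\sqrt{\varkappa_t+\eta}$ is small for $E>L_t$. This feeds the smallness of $\alpha_1$ (rather than just $|\alpha_2|$) into the moment bound and yields an \emph{improved} estimate on $|m_t-\widetilde m_t|$ outside the spectrum, sharper than $q_t^{-2}+(N\eta)^{-1}$, which is what is needed to push the eigenvalue-exclusion argument down to scale $N^{-2/3}+q_t^{-4}$. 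Your proposal is missing this improved-outside-the-spectrum step.
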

The proof of Lemma~\ref{lemma:norm} is split into a lower and an upper bound. The lower bound is a direct consequence of the local law in Proposition~\ref{prop:locallaw}. The upper bound requires an additional stability analysis starting from the first inequality in~\eqref{eq:recursive3bis}. This time we capitalize on the fact that $\alpha_1(z)=\Im\widetilde{m}_t(z)$ behaves as $\eta/\sqrt{\varkappa_t(E)+\eta}$, for $E\ge L_+$, to get sharper estimates outside of the spectrum. Since the arguments for the lower and upper bounds are similar to the ones in~\cite{LeeSchnelli2016}, we postpone their proofs to the Appendix B of the Supplement~\cite{supplement}.

\section{Proof of Tracy--Widom limit for the largest eigenvalue} \label{sec:TW}
In this section, we prove the Theorem \ref{thm:TWlimit}, the Tracy--Widom limiting distribution of the largest eigenvalue. Following the idea from \cite{ErdosYauYin2012}, we consider the imaginary part of the normalized trace of the Green function $m\equiv m^{X^\dagger X}$ of $X^\dagger X$. For $\eta>0$, let
\begin{align}
\theta_\eta(y)=\frac{\eta}{\pi(y^2+\eta^2)},\quad(y\in\mathbb{R}).
\end{align}
It can be easily checked from the definition of the Green function that
\begin{align}
\Im m(E+\ii\eta)=\frac{\pi}{N}\Tr \theta_\eta(X^\dagger X-E).
\end{align}

The first proposition in this section shows how we can approximate the distribution of the largest eigenvalue by using the Green function. Recall that $L_+$ is the right endpoint of the deterministic probability measure in Theorem \ref{thm:locallaw}. 
\begin{proposition} \label{prop:Greenftn}
Let $X$ satisfy Assumption \ref{assume}, with $\phi>1/6$. Denote by $\lambda_1^{X^\dagger X}$ the largest eigenvalue of $X^{\dagger}X$. Fix $\epsilon>0$ and let $E\in\mathbb{R}$ be such that $|E-L_+|\leq N^{-2/3+\epsilon}$. Set $E_{+} :=L_++2N^{-2/3+\epsilon}$ and define $\chi_E:=\mathds{1}_{[E,E_{+}]}.$ Let $\eta_1:=N^{-2/3-3\epsilon}$ and $\eta_2:=N^{-2/3-9\epsilon}$. Let $K:\mathbb{R}\rightarrow[0,\infty)$ be a smooth function satisfying
\begin{align}
K(x)=\left\{\begin{array}{ll}
1&\textrm{if $|x|<1/3$} \\
0&\textrm{if $|x|>2/3$},
\end{array}\right.
\end{align}
which is a monotone decreasing on $[0,\infty)$. Then, for any $D>0$,
\begin{align}
\mathbb{E}[K(\Tr(\chi_E *\theta_{\eta_2})(X^\dagger X))]>\mathbb{P}(\lambda_1^{X^\dagger X}\leq E-\eta_1 )-N^{-D}
\end{align}
and
\begin{align}
\mathbb{E}[K(\Tr(\chi_E *\theta_{\eta_2})(X^\dagger X))]<\mathbb{P}(\lambda_1^{X^\dagger X}\leq E+\eta_1 )+N^{-D}
\end{align}
for $N$ sufficiently large, with $\theta_{\eta_2}$.
\end{proposition}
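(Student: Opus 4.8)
The plan is to carry out the by-now-standard reduction of the distribution of $\lambda_1^{X^\dagger X}$ to a smoothed functional of $\Im m^{X^\dagger X}$, in the spirit of \cite{ErdosYauYin2012}. The basic identity is
\[
\Tr(\chi_E *\theta_{\eta_2})(X^\dagger X) = \sum_{i=1}^N (\chi_E *\theta_{\eta_2})(\lambda_i^{X^\dagger X}) = \int_E^{E_+} \frac{N}{\pi}\,\Im m^{X^\dagger X}(y+\ii\eta_2)\,\di y ,
\]
together with the elementary facts that $(\chi_E *\theta_{\eta_2})(x)=\int_E^{E_+}\theta_{\eta_2}(x-y)\,\di y$ is non-negative, is $\geq 1-C\eta_2/\min\{x-E,\,E_+-x\}$ when $x$ lies well inside $(E,E_+)$, and is $\leq C\eta_2(E_+-E)/\operatorname{dist}(x,[E,E_+])^2$ when $x\notin[E,E_+]$. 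The goal is to show that on a high-probability event, and up to the permitted $\eta_1$-shift of the threshold, $K(\Tr(\chi_E *\theta_{\eta_2})(X^\dagger X))$ coincides with $\mathds{1}(\lambda_1^{X^\dagger X}<E)$.

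For the first inequality I would argue that on $\Omega\cap\{\lambda_1^{X^\dagger X}\leq E-\eta_1\}$, where $\Omega$ is a high-probability event to be specified, one has $\Tr(\chi_E *\theta_{\eta_2})(X^\dagger X)<1/3$. Indeed, all eigenvalues then lie to the left of $[E,E_+]$ at distance $\geq\eta_1\gg\eta_2$, so
\[
\Tr(\chi_E *\theta_{\eta_2})(X^\dagger X)\leq C\eta_2(E_+-E)\sum_{i=1}^N\frac{1}{(E-\lambda_i^{X^\dagger X})^2}.
\]
Decomposing the eigenvalues into dyadic shells $\{E-\lambda_i^{X^\dagger X}\in[2^{k-1}\eta_1,2^k\eta_1)\}$, bounding the eigenvalue count in each shell by Corollary \ref{cor:localdensity} together with the square-root decay of $\widetilde\rho$ at $L_+$ from Lemma \ref{lemma:rho_t}, and summing the resulting geometric series, one gets $\Tr(\chi_E *\theta_{\eta_2})(X^\dagger X)\prec N^{-c\epsilon}$ for some $c>0$; the event $\Omega$ is the one on which these shell estimates hold (obtained by a union bound over the $O(\log N)$ shells and a fine net of energies $y\in[E,E_+]$). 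Here the hypothesis $\phi>1/6$ enters to render the $q^{-2}$-contributions of the shell counts negligible. Since then $K=1$ on this event and $K\geq 0$, this yields
\[
\mathbb{E}\big[K(\Tr(\chi_E *\theta_{\eta_2})(X^\dagger X))\big]\geq\mathbb{P}\big(\Omega\cap\{\lambda_1^{X^\dagger X}\leq E-\eta_1\}\big)\geq\mathbb{P}(\lambda_1^{X^\dagger X}\leq E-\eta_1)-N^{-D}.
\]

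For the second inequality I would fix the high-probability event $\Omega':=\{\|X^\dagger X\|\leq L_++N^{-2/3+\epsilon/2}\}$, which satisfies $\mathbb{P}(\Omega'^c)\leq N^{-D}$ by Theorem \ref{thm:norm} since $q^{-4}\leq N^{-4\phi}\ll N^{-2/3}$ for $\phi>1/6$; this is the second, decisive use of the sparsity hypothesis, as it guarantees $\lambda_1^{X^\dagger X}<E_+$ on $\Omega'$. On $\Omega'\cap\{\lambda_1^{X^\dagger X}>E+\eta_1\}$ the largest eigenvalue lies in $(E+\eta_1,E_+)$, at distance $\geq\eta_1\gg\eta_2$ from $E$ and $\geq N^{-2/3+\epsilon}\gg\eta_2$ from $E_+$, hence
\[
(\chi_E *\theta_{\eta_2})(\lambda_1^{X^\dagger X})=\int_{\lambda_1^{X^\dagger X}-E_+}^{\lambda_1^{X^\dagger X}-E}\theta_{\eta_2}(u)\,\di u\geq 1-C\frac{\eta_2}{\eta_1}>\frac{2}{3}
\]
for large $N$; since all other summands are non-negative this forces $\Tr(\chi_E *\theta_{\eta_2})(X^\dagger X)>2/3$, so $K=0$ there. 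Combined with $0\leq K\leq 1$,
\[
\mathbb{E}\big[K(\Tr(\chi_E *\theta_{\eta_2})(X^\dagger X))\big]\leq\mathbb{P}(\lambda_1^{X^\dagger X}\leq E+\eta_1)+\mathbb{P}(\Omega'^c)\leq\mathbb{P}(\lambda_1^{X^\dagger X}\leq E+\eta_1)+N^{-D}.
\]

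The main obstacle is the quantitative step in the first inequality: one must show that the smoothed trace dips below the cutoff $1/3$ on the event that the spectrum vacates a window of width $O(N^{-2/3+\epsilon})$ near the edge. The naive bound $\Im m^{X^\dagger X}(y+\ii\eta_2)\leq\eta_2/\eta_1^2$ only gives $\Tr(\chi_E *\theta_{\eta_2})(X^\dagger X)\prec N^{1-2\epsilon}$, which is far too weak, and the scale-$\eta_2$ local law is useless here because its error $1/(N\eta_2)$ dwarfs the typical size of $\Im\widetilde m$ at that scale --- so one genuinely needs the layered bound via Corollary \ref{cor:localdensity} (whose error $|I|/q^2+1/N$ carries no $1/(N\eta)$), the square-root edge profile, and the exact choice of exponents $\eta_1=N^{-2/3-3\epsilon}$, $\eta_2=N^{-2/3-9\epsilon}$ together with $\phi>1/6$, which are tuned precisely so that every term in the resulting sum is a negative power of $N$. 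The remaining ingredients --- the convolution identity, the elementary estimates on $\chi_E *\theta_{\eta_2}$, and the union bounds --- are routine.
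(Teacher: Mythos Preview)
Your argument is correct and is precisely the standard reduction going back to \cite{ErdosYauYin2012}; the paper does not give its own proof but simply refers to Proposition~7.1 of~\cite{LeeSchnelli2016}, remarking only that the absence of an improved local law near the lower edge does not affect the argument. Your dyadic estimate via Corollary~\ref{cor:localdensity} and the use of Theorem~\ref{thm:norm} (with $\phi>1/6$ ensuring both $q^{-2}$ and $q^{-4}$ errors are harmless) are exactly what is carried out in that reference.
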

For the proof, we refer to Proposition 7.1 of \cite{LeeSchnelli2016}. We remark that the lack of the improved local law near the lower edge does not alter the proof of Proposition \ref{prop:Greenftn}. 

Next, we state the Green function comparison result for our model. We let $W^{G}$ be a $M\times N$ Gaussian matrix independent of $X$ and denote by $m^{G}\equiv m^{W^{G}}$ the normalized trace of its Green function.
\begin{proposition} \label{prop:Greencomparison}
Under the assumptions of Proposition~\ref{prop:Greenftn} the following holds. Let $\epsilon>0$ and set $\eta_0=N^{-2/3-\epsilon}$. Let $E_1, E_2\in\mathbb{R}$ satisfy $|E_1|,|E_2|\leq N^{-2/3+\epsilon}$. Consider a smooth function $F:\mathbb{R}\rightarrow\mathbb{R}$ such that
\begin{align}
\max_{x\in\mathbb{R}}|F^{(l)}(x)|(|x|+1)^{-C}\leq C,\quad(l\in\mathbf{[}1, 11\mathbf{]}).
\end{align}
Then, for any sufficiently small $\epsilon>0$, there exists $\delta>0$ such that
\begin{align}
\left| \mathbb{E}F\Big(\int^{E_2}_{E_1}\Im m(x+L_++\ii\eta_0)dx\Big) -\mathbb{E}F\Big(\int^{E_2}_{E_1}\Im m^{G}(x+\lambda_{+}+\ii\eta_0)dx\Big) \right|\leq N^{-\delta}
\end{align}
for large enough $N$.
\end{proposition}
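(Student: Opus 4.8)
The plan is to prove Proposition~\ref{prop:Greencomparison} by interpolating between $X^\dagger X$ and $(W^G)^\dagger W^G$ via the Dyson matrix flow $X_t = \mathrm{e}^{-t/2}X_0 + \sqrt{1-\mathrm{e}^{-t}}\,W^G$ and showing that the quantity of interest is essentially constant in $t$. Concretely, set $\mathcal{X}_t := \int_{E_1}^{E_2}\Im m_t(x+L_t+\ii\eta_0)\,\di x$, where $m_t$ is the normalized trace of the Green function of $X_t^\dagger X_t$ (equivalently of the linearization $H_t$) and $L_t$ is the $t$-dependent edge from Proposition~\ref{prop:locallaw}; note that for $t\ge 6\log N$ the matrix $X_t$ is a Wishart matrix up to negligible error, with edge $\lambda_+$, so $\mathcal{X}_{6\log N}$ matches the Gaussian side. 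The goal is then to show $\big|\frac{\di}{\di t}\mathbb{E}F(\mathcal{X}_t)\big|$ is integrably small in $t$, so that $|\mathbb{E}F(\mathcal{X}_0) - \mathbb{E}F(\mathcal{X}_{6\log N})| \le N^{-\delta}$.

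The key computational step is to differentiate $\mathbb{E}F(\mathcal{X}_t)$ in $t$. By the chain rule this produces $\mathbb{E}\big[F'(\mathcal{X}_t)\,\dot{\mathcal{X}}_t\big]$, and $\dot{\mathcal{X}}_t$ has two contributions: the time derivative of $m_t$ at fixed spectral parameter, and the term coming from $\dot L_t$, which by~\eqref{eq:L'_t} is of order $\mathrm{e}^{-t}q_t^{-2}s^{(4)}$. The derivative of $m_t$ in $t$ is computed from the flow: $\partial_t (X_t)_{\alpha i} = -\tfrac12(X_t)_{\alpha i} + \tfrac{1}{2\sqrt{\mathrm{e}^t-1}}(W^G)_{\alpha i}$, and after expressing the $W^G$-part via Gaussian integration by parts (Stein's lemma, in its truncated form Lemma~\ref{lemma:Stein} is not needed here since $W^G$ is genuinely Gaussian) one obtains an expression in terms of Green function entries of $H_t$. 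The crucial cancellation, as in~\cite{LeeSchnelli2016}, is that the leading term in $\partial_t m_t$ is precisely matched — up to the correction governed by $P_z(\widetilde m_t)=0$ and the motion of the edge $\dot L_t$ — by the change in the deterministic reference $\widetilde m_t(x+L_t+\ii\eta_0)$, so that the net drift of $\mathcal{X}_t$ is controlled by the sparsity-driven cumulant terms ($\kappa_t^{(k)}$, $k\ge 3$) plus the self-consistent error from the local law. Since $|\kappa_t^{(k)}|\lesssim \mathrm{e}^{-t}(Ck)^{ck}/(Nq_t^{k-2})$ decays in $t$, and the local law Proposition~\ref{prop:locallaw} gives $|m_t - \widetilde m_t|\prec q_t^{-2} + (N\eta_0)^{-1}$ together with the optimal bounds on individual Green function entries near the edge, each such term is bounded by $N^{\epsilon}(q_t^{-2} + N^{-1/3})$ times something integrable in $t$; the condition $\phi>1/6$, i.e.\ $q\gg N^{1/6}$, is exactly what makes $\int_0^{6\log N} q_t^{-2}\,\di t \sim q^{-2} = o(N^{-1/3})$ and hence all error contributions of order $N^{-\delta}$.

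More precisely, I would organize the argument as follows. First, reduce to controlling $\frac{\di}{\di t}\mathbb{E}F(\mathcal{X}_t)$ pointwise in $t$, using Proposition~\ref{prop:Greenftn}-type a priori bounds to ensure $F'(\mathcal{X}_t)$ and its relevant derivatives are bounded (the polynomial growth bound on $F^{(l)}$ and the deterministic bound $\|H_t\|\le\Lambda$ from Proposition~\ref{prop26} handle this). Second, carry out the Gaussian integration by parts for the $W^G$-derivative and the explicit $t$-derivative of the $X_t$-term, collecting terms: a "self-consistent" main term, a term proportional to $\dot L_t + \partial_t(\text{edge correction})$, and remainder terms involving higher cumulants $\kappa_t^{(k)}$ and products of off-diagonal Green function entries. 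Third, invoke the local law (Proposition~\ref{prop:locallaw}) and the entrywise bounds~\eqref{eq:localMP2} to estimate each remainder, and use the choice of $L_t$ (which solves $P_z(\widetilde m_t)=0$, hence kills the main term by construction) to see the self-consistent term vanishes to the required order. Fourth, integrate over $t\in[0,6\log N]$ and use $\phi>1/6$ to conclude. The main obstacle I expect is the bookkeeping of the many terms generated by the cumulant/integration-by-parts expansion near the edge, where $\eta_0 = N^{-2/3-\epsilon}$ is below the scale where the naive local law is strong — one must use the refined edge behavior $\Im\widetilde m_t(x+L_t+\ii\eta_0)\sim\eta_0/\sqrt{\varkappa_t+\eta_0}$ from Lemma~\ref{lemma:rho_t}(3) and the fluctuation averaging mechanism to gain the extra powers of $N^{-1/3}$ needed; this is where the asymmetry of the Marchenko--Pastur law (via the linearization $H_t$ and the more complicated polynomial $P$) makes the estimates more delicate than in the Wigner case of~\cite{LeeSchnelli2016}.
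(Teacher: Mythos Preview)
Your overall strategy---interpolate via the Dyson flow $X_t$, differentiate $\mathbb{E}F(\mathcal{X}_t)$ in $t$, expand in cumulants, and integrate over $t\in[0,6\log N]$---is exactly the paper's approach. However, your description of the central cancellation is off in a way that matters.

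You frame the mechanism as: the ``leading term in $\partial_t m_t$'' is matched by ``the change in the deterministic reference $\widetilde m_t$'', after which the higher cumulants $\kappa_t^{(k)}$, $k\ge 3$, are the small remainders you bound by $N^{\epsilon}(q_t^{-2}+N^{-1/3})$. That is not how the argument runs. The deterministic $\widetilde m_t$ plays no role in the comparison beyond fixing $L_t$; there is no ``self-consistent main term'' to kill via $P_z(\widetilde m_t)=0$. The paper applies the Stein-type identity for $\dot{(X_t)}_{\alpha j}$ (the lemma stated just before the proof), which by construction has vanishing first- and second-cumulant contributions---the second moments of $X_t$ are constant in $t$---so the expansion of $\sum_{i,j,\alpha}\mathbb{E}\big[\dot X_{\alpha j}\,\partial_{H_{\alpha j}}G_{ii}\,F'(Y)\big]$ begins at the \emph{third} cumulant. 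Writing $J_r$ for the term involving $\kappa^{(r+1)}$, the point is that $J_3$ (the fourth-cumulant term) is \emph{not} small: it equals $-\dot L_t\sum_{i,j}\mathbb{E}[F'(Y)G_{ij}G_{ji}]+O(N^{2/3-\epsilon'})$ and cancels \emph{exactly} against the $\dot L_t$ contribution from the moving edge in~\eqref{eq:F_derivative}. The remaining $J_r$ with $r=2$ or $r\ge 4$ are each $O(N^{2/3-\epsilon'})$; this is Lemma~\ref{lemma:J_r}, and its proof (in the supplement) is where the entrywise Green function bounds and the hypothesis $\phi>1/6$ actually enter. After this cancellation the derivative $\frac{\di}{\di t}\mathbb{E}F(Y)$ is $O(N^{-\epsilon'/2})$ \emph{uniformly} in $t$, so the integration over $[0,6\log N]$ costs only a logarithm---your integrability argument for $\int q_t^{-2}\,\di t$ is not the mechanism, and no fluctuation averaging is needed here.

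In short: drop the reference to $\widetilde m_t$ in the cancellation, recognize that the fourth-cumulant term is large and is what $\dot L_t$ is designed to absorb, and isolate the work into proving the analogue of Lemma~\ref{lemma:J_r}.
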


Proposition \ref{prop:Greencomparison} directly implies Theorem \ref{thm:TWlimit}, the Tracy--Widom limit for the largest eigenvalue. A detailed proof is found, {\it e.g.}, with the same notation in~\cite{LeeSchnelli2016}, Section 7.

In the remainder of the section, we prove Proposition \ref{prop:Greencomparison}. We begin by the following application of the generalized Stein lemma.
\begin{lemma}
Fix $\ell \in \mathbb{N}$ and let $F\in C^{\ell+1} (\mathbb{R}; \mathbb{C}^+).$ Let $Y\equiv Y_0$ be a random variable with finite moments to order $\ell+2$ and let $W$ be a Gaussian random variable independent of $Y$. Assume that $\mathbb{E}[Y]=\mathbb{E}[W]=0$ and $\mathbb{E}[Y^2] =\mathbb{E}[W^2]$. Introduce
\begin{align}
Y_t := \mathrm{e}^{-t/2} Y_0 +\sqrt{1-\mathrm{e}^{-t}}W,
\end{align}
and let $\dot Y_t\equiv \di Y_t /\di t$.  Then,
\begin{align}\label{eq:genStein}
\mathbb{E}\Big[ \dot Y_t F(Y_t)\Big] = -\frac{1}{2} \sum_{r=2}^{\ell} \frac{\kappa^{(r+1)}(Y_0)}{r!} \mathrm{e}^{-\frac{(r+1)t}{2}}\mathbb{E} \big[F^{(r)}(Y_t)\big] + \mathbb{E}\big[\Omega_\ell (\dot Y_t F(Y_t))\big],
\end{align}
where $\mathbb{E}$ denotes the expectation with respect to $Y$ and $W$, $\kappa^{(r+1)}(Y)$ denotes the $(r+1)$-th cumulant of $Y$ and $F^{(r)}$ denotes the $r$-th derivative of the function $F$. The error term $\Omega_\ell$ in \eqref{eq:genStein} satisfies
\begin{align}
\big|\mathbb{E}\big[\Omega_\ell (\dot Y_t F(Y_t))\big]\big|
\leq C_\ell \mathbb{E}[|Y_t|^{|\ell+2}] \sup_{|x|\leq Q} |F^{(\ell+1)} (x)|+ C_\ell \mathbb{E}[|Y_t|^{\ell+2}\mathbf{1}(|Y_t|>Q)]\sup_{x\in \mathbb{R}}|F^{(\ell+1)} (x)|, 
\end{align}
where $Q> 0$ is an arbitrary fixed cutoff and $C_\ell$ satisfies $C_\ell \leq \frac{(C\ell)^{\ell}}{\ell !}$ for some numerical constant $C$.
\end{lemma}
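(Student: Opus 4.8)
The plan is to derive \eqref{eq:genStein} as a time-differentiated version of the standard generalized Stein lemma (Lemma~\ref{lemma:Stein}), exploiting the Gaussian interpolation structure. First I would record the elementary identity $\dot Y_t = -\tfrac12 \mathrm{e}^{-t/2} Y_0 + \tfrac12 \mathrm{e}^{-t}(1-\mathrm{e}^{-t})^{-1/2} W$; together with the covariance assumption $\mathbb{E}[Y_0^2]=\mathbb{E}[W^2]$ one computes $\mathbb{E}[\dot Y_t Y_t] = 0$. This is the key cancellation: it will be responsible for the sum in \eqref{eq:genStein} starting at $r=2$ rather than $r=1$, mirroring how the quadratic (second-cumulant) term drops out along the Dyson flow.

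Next I would apply Lemma~\ref{lemma:Stein} separately to the two pieces of $\dot Y_t$. For the Gaussian piece $W$, integration by parts (the Gaussian Stein identity, i.e.\ Lemma~\ref{lemma:Stein} with all cumulants of order $\ge 3$ vanishing) gives $\mathbb{E}[W F(Y_t)] = \mathbb{E}[W^2]\sqrt{1-\mathrm{e}^{-t}}\,\mathbb{E}[F'(Y_t)]$, since $\partial_W Y_t = \sqrt{1-\mathrm{e}^{-t}}$ and $F$ depends on $W$ only through $Y_t$. For the $Y_0$ piece, conditioning on $W$ and applying Lemma~\ref{lemma:Stein} in the variable $Y_0$ with $\partial_{Y_0} Y_t = \mathrm{e}^{-t/2}$ yields $\mathbb{E}[Y_0 F(Y_t)] = \sum_{r=1}^{\ell} \frac{\kappa^{(r+1)}(Y_0)}{r!}\mathrm{e}^{-rt/2}\mathbb{E}[F^{(r)}(Y_t)] + \mathbb{E}[\Omega_\ell^{(0)}]$, where $\kappa^{(r+1)}(Y_0)$ are the cumulants of $Y_0$ and the $r=1$ term is $\kappa^{(2)}(Y_0)\mathrm{e}^{-t/2}\mathbb{E}[F'(Y_t)]$. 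Combining the two contributions with the coefficients from $\dot Y_t$, the two first-derivative terms are
\begin{align*}
-\tfrac12 \mathrm{e}^{-t/2}\cdot \kappa^{(2)}(Y_0)\mathrm{e}^{-t/2}\,\mathbb{E}[F'(Y_t)] + \tfrac12 \frac{\mathrm{e}^{-t}}{\sqrt{1-\mathrm{e}^{-t}}}\cdot \mathbb{E}[W^2]\sqrt{1-\mathrm{e}^{-t}}\,\mathbb{E}[F'(Y_t)],
\end{align*}
which cancel exactly because $\kappa^{(2)}(Y_0)=\mathbb{E}[Y_0^2]=\mathbb{E}[W^2]$. The surviving terms are $-\tfrac12\sum_{r=2}^{\ell}\frac{\kappa^{(r+1)}(Y_0)}{r!}\mathrm{e}^{-t/2}\mathrm{e}^{-rt/2}\mathbb{E}[F^{(r)}(Y_t)] = -\tfrac12\sum_{r=2}^{\ell}\frac{\kappa^{(r+1)}(Y_0)}{r!}\mathrm{e}^{-(r+1)t/2}\mathbb{E}[F^{(r)}(Y_t)]$, which is the main term of \eqref{eq:genStein}.

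It remains to collect the error terms: $\mathbb{E}[\Omega_\ell(\dot Y_t F(Y_t))] := -\tfrac12\mathrm{e}^{-t/2}\,\mathbb{E}[\Omega_\ell^{(0)}]$, the error from applying Lemma~\ref{lemma:Stein} to the non-Gaussian variable $Y_0$ (the Gaussian piece produces no error). The bound on $\mathbb{E}[\Omega_\ell^{(0)}]$ supplied by Lemma~\ref{lemma:Stein}, stated in the variable $Y_0$, involves $\mathbb{E}[|Y_0|^{\ell+2}]$ and $\sup|F^{(\ell+1)}|$; to rewrite it in terms of $Y_t$ as claimed, I would use that $F^{(\ell+1)}$ is evaluated at $Y_t$ and that $Y_t$ is a contraction-plus-Gaussian perturbation of $Y_0$, so moments and tail contributions transfer up to the prefactor $\mathrm{e}^{-t/2}\le 1$ absorbed into $C_\ell$; conditioning on $W$ and then taking expectation is the clean way to do this bookkeeping. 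The main obstacle I anticipate is precisely this last step — matching the error term to the form with $\mathbb{E}[|Y_t|^{\ell+2}]$ and the cutoff indicator $\mathbf{1}(|Y_t|>Q)$ rather than the naturally-appearing $Y_0$-version — which requires care with the conditioning and with how the cutoff $Q$ interacts with the shift $\mathrm{e}^{-t/2}Y_0 \mapsto Y_t$, but is otherwise routine given the explicit error bound in Lemma~\ref{lemma:Stein}. The constant $C_\ell \le (C\ell)^\ell/\ell!$ is inherited directly from Lemma~\ref{lemma:Stein}.
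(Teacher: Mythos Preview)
The paper does not give an explicit proof of this lemma; it is stated as a direct consequence of the generalized Stein lemma (Lemma~\ref{lemma:Stein}) and then immediately used in the proof of Proposition~\ref{prop:Greencomparison}. Your approach---splitting $\dot Y_t$ into its $Y_0$- and $W$-components, applying Gaussian integration by parts to the $W$-piece and Lemma~\ref{lemma:Stein} to the $Y_0$-piece, and observing that the $r=1$ terms cancel by the second-moment matching---is precisely the intended derivation and is correct.

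The one point you flag, namely that the error from Lemma~\ref{lemma:Stein} applied in the variable $Y_0$ naturally comes out in terms of $\mathbb{E}[|Y_0|^{\ell+2}]$ and $\mathbf{1}(|Y_0|>Q)$ rather than the $Y_t$-version stated in the lemma, is a genuine bookkeeping issue but not a real obstacle: since $W$ is Gaussian with $\mathbb{E}[W^2]=\mathbb{E}[Y_0^2]$, the moments of $Y_0$, $W$, and $Y_t$ are all comparable up to $\ell$-dependent constants, and the tail indicator can be handled by adjusting the cutoff. In the paper's applications (where $Y_0=(X_0)_{\alpha i}$ and $Y_t=(X_t)_{\alpha i}$ satisfy the same moment bounds~\eqref{eq:moments} with $q$ replaced by $q_t$), the distinction is immaterial and either form of the error suffices.
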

\begin{proof}[Proof of Proposition \ref{prop:Greencomparison}] Fix a (small) $\epsilon>0$. Consider $x\in [E_1, E_2]$. For simplicity, let
\begin{align}
G\equiv G_t(x+L_t +\ii\eta_0), \quad m\equiv m_t(x+L_t+\ii\eta_0),
\end{align}
with $\eta_0=N^{-2/3-\epsilon}$, and define
\begin{align}
Y\equiv Y_t:=N\int^{E_2}_{E_1}\Im m(x+L_t +\ii\eta_0) \di x.
\end{align}
Note that $Y\prec N^{\epsilon}$ and $|F^{(l)}(Y)|\prec N^{C\epsilon}$ for $l\in[1,11]$. Recall from \eqref{eq:L_t} and \eqref{eq:L'_t} that
\begin{align*}
L_t&=\lambda_{+}+\frac{1}{\sqrt d}\big(1+\frac{1}{\sqrt d}\big)^2\mathrm{e}^{-t}s^{(4)}q_t^{-2}+O(\mathrm{e}^{-2t}q_t^{-4}), \\
\dot{L_t}&=-2\frac{1}{\sqrt d}\big(1+\frac{1}{\sqrt d}\big)^2\mathrm{e}^{-t}s^{(4)}q_t^{-2}+O(\mathrm{e}^{-2t}q_t^{-4}),
\end{align*}
with $q_t=\mathrm{e}^{t/2}q_0$. Let $z=x+L_t+\ii\eta_0$ and $G\equiv G(z)$. Differentiating $F(Y)$ with respect to $t$, we get
\begin{align}
\frac{\textrm{d}}{\textrm{d}t}\mathbb{E}F(Y) &= \mathbb{E}\Big[F'(Y)\frac{\textrm{d}Y}{\textrm{d}t}\Big]=\mathbb{E}\Big[F'(Y)\Im\int^{E_2}_{E_1}\sum^{N}_{i=1}\frac{\textrm{d}G_{ii}}{\textrm{d}t}\textrm{d}x\Big] \notag \\
&=\mathbb{E}\Big[F'(Y)\Im\int^{E_2}_{E_1}\Big(\sum_{i,j,\alpha}\dot{X}_{\alpha j}\frac{\partial G_{ii}}{\partial H_{\alpha j}}+\dot{L_t}\sum_{1\leq i,j\leq N} G_{ij}G_{ji}\Big)\textrm{d}x\Big], \label{eq:F_derivative}
\end{align}
where by definition
\begin{align}
\dot{X}_{\alpha j}\equiv(\dot{X}_{t})_{\alpha j}=-\frac{1}{2}\mathrm{e}^{-t/2}(X_0)_{\alpha j}+\frac{\mathrm{e}^{-t}}{2\sqrt{1-\mathrm{e}^{-t}}}W^{G}_{\alpha j}.
\end{align}
Thus, we find that
\begin{align}\label{eq:F'term1}
&\sum_{i,j,\alpha}\mathbb{E}\Big[\dot{X}_{\alpha j}F'(Y)\frac{\partial G_{ii}}{\partial X_{\alpha j}}\Big] = -2 \sum_{i,j,\alpha}\mathbb{E}\Big[\dot{X}_{\alpha j}F'(Y)G_{i\alpha}G_{ji}\Big] \notag \\
&=\frac{\mathrm{e}^{-t}}{N}\sum_{r=2}^{\ell}\frac{q_t^{-(r-1)}s^{(r+1)}}{r!}\sum_{1\leq i\leq j}\sum_{j,\alpha}\mathbb{E}[\partial_{\alpha j}^r (F'(Y)G_{i\alpha}G_{ji})] + O(N^{1/3 +C\epsilon}),
\end{align}
for $\ell=10$, where we use the short hand $\partial_{j\alpha}=\partial/\partial H_{j\alpha}$. Here, the error term $O(N^{1/3 +C\epsilon})$ in \eqref{eq:F'term1} corresponds to $\Omega_\ell$ in \eqref{eq:genStein}, which is $O(N^{C\epsilon} N^2 q_t^{-10})$ for $Y=H_{j\alpha}$.

We claim the following lemma which is proved in Appendix C of Supplements~\cite{supplement}.
\begin{lemma}\label{lemma:J_r}
For an integer $r\geq 2$, let
\begin{align}
J_r:=\frac{\mathrm{e}^{-t}}{N}\frac{q_t^{-(r-1}s^{(r+1)}}{r!}\sum_{i,j,\alpha}\mathbb{E}[\partial_{j\alpha}^r (F'(Y)G_{ij}G_{\alpha i})].
\end{align}
Then, for any $r\neq 3$,
\begin{align}
J_r=O(N^{2/3-\epsilon'}),
\end{align}
and
\begin{align}
J_3=\frac{2}{\sqrt d}\Big(1+\frac{1}{\sqrt d}\Big)^2 \mathrm{e}^{-t}s^{(4)}q_t^{-2}\sum_{i,j}\mathbb{E}[F'(Y)G_{ij}G_{ji}]+O(N^{2/3-\epsilon'}).
\end{align}
\end{lemma}

Assuming Lemma \ref{lemma:J_r}, we find that there exists $\epsilon'>2\epsilon$ such that, for all $t\in [0,6\log N]$,
\begin{align}
\sum_{i,j,\alpha}\mathbb{E}\Big[\dot{X}_{j\alpha} F'(Y)\frac{\partial G_{ii}}{\partial X_{j\alpha}}\Big]=-\dot{L_t}\sum_{i,j}\mathbb{E}[G_{ij}G_{ji}F'(Y)]+O(N^{2/3-\epsilon'}),
\end{align}
which implies that the right-hand side of \eqref{eq:F_derivative} is $O(N^{-\epsilon'/2})$. Integrating Equation \eqref{eq:F_derivative} from $t=0$ to $t=6 \log N$, we get
\begin{align*}
&\left|\mathbb{E}F \Big( N\int_{E_1}^{E_2} \Im m(x+L_t+\ii\eta_0) \textrm{d}x\Big)_{t=0} \right.\\
&\qquad \qquad \qquad \left.-\mathbb{E}F\Big(  N\int_{E_1}^{E_2} \Im m(x+L_t+\ii\eta_0) \textrm{d}x\Big)_{t=6 \log N}\right| \leq N^{-\epsilon'/4}.
\end{align*}
We remark that the comparison between $\Im m|_{t=6\log N}$ and $\Im m^{G}$ is trivial as in the Wigner-type case. Comparing $i$-th largest eigenvalues of $X^{\dagger}X$ and~$\lambda_i^{G}$, we find that
\begin{align}
\left| \Im m|_{t=6 \log N} - \Im m^{G} \right| \prec N^{-5/3}.
\end{align}
This completes the proof of desired proposition.
\end{proof}

\subsection*{Acknowledgements}

We thank Xiucai Ding for useful comments and suggestions. Jong Yun Hwang and Ji Oon Lee are supported in parts by the Samsung Science and Technology Foundation project number SSTF-BA1402-04. Kevin Schnelli is supported in parts by the G\"oran Gustafsson Foundation and the Swedish research council grant VR-2017-05195.
\subsection*{Supplementary Material}(\urlstyle{same}\url{http://mathsci.kaist.ac.kr/~jioon/sparse_covariance/Supplementary_arxiv.pdf})
In the supplementary material, we will provide the proofs of Lemmas \ref{lemma:recursive}, \ref{le new lemma 1}, \ref{lemma:norm} and \ref{lemma:J_r}. 


\begin{thebibliography}{10}


\bibitem{Akhiezer}
N.~I. Akhiezer.
\newblock {\em The classical moment problem and some related questions in
  analysis}.
\newblock Translated by N. Kemmer. Hafner Publishing Co., New York, 1965.

\bibitem{Anderson2013}
G.~W. Anderson.
\newblock Convergence of the largest singular value of a polynomial in
  independent wigner matrices.
\newblock {\em Ann. Probab.}, 41:2103--2181, 2013.

\bibitem{BaoPanZhou2015}
Z.~Bao, G.~Pan, and W.~Zhou.
\newblock Universality for the largest eigenvalue of sample covariance matrices
  with general population.
\newblock {\em Ann. Statist.}, 43(1):382--421, 2015.

\bibitem{BDMN2008}
P.~Bianchi, M.~Debbah, M.~Maida, and J.~Najim.
\newblock Performance of statistical tests for single-source detection using
  random matrix theory.
\newblock {\em IEEE Trans. Inform. Theory}, 57(4):2400--2419, 2011.

\bibitem{BickelSarkar2016}
P.~J. Bickel and P.~Sarkar.
\newblock Hypothesis testing for automated community detection in networks.
\newblock {\em J. R. Stat. Soc. Ser. B. Stat. Methodol.}, 78(1):253--273, 2016.

\bibitem{BourgadeErdosYau}
P.~Bourgade, L.~Erd\H{o}s, and H.-T. Yau.
\newblock Edge universality of beta ensembles.
\newblock {\em Comm. Math. Phys.}, 332(1):261--353, 2014.

\bibitem{BourgadeHuangYau}
P.~Bourgade, J.~Huang, and H.-T. Yau.
\newblock Eigenvector statistics of sparse random matrices.
\newblock {\em Electron. J. Probab.}, 22:Paper No. 64, 38, 2017.

\bibitem{ChoiTaylorTibshirani2017}
Y.~Choi, J.~Taylor, and R.~Tibshirani.
\newblock Selecting the number of principal components: estimation of the true
  rank of a noisy matrix.
\newblock {\em Ann. Statist.}, 45(6):2590--2617, 2017.

\bibitem{DingYang2017}
X.~Ding and F.~Yang.
\newblock A necessary and sufficient condition for edge universality at the
  largest singular values of covariance matrices.
\newblock {\em Ann. Appl. Probab.}, 28(3):1679--1738, 2018.

\bibitem{ElKaroui2007}
N.~El~Karoui.
\newblock Tracy--{W}idom limit for the largest eigenvalue of a large class of
  complex sample covariance matrices.
\newblock {\em Ann. Probab.}, 35(2):663--714, 2007.

\bibitem{ErdosKnowlesYauYin2012}
L.~Erd\H{o}s, A.~Knowles, H.-T. Yau, and J.~Yin.
\newblock Spectral statistics of {E}rd{\H o}s-{R}\'enyi graphs {I}{I}:
  {E}igenvalue spacing and the extreme eigenvalues.
\newblock {\em Comm. Math. Phys.}, 314(3):587--640, 2012.

\bibitem{ErdosKnowlesYauYin2013}
L.~Erd\H{o}s, A.~Knowles, H.-T. Yau, and J.~Yin.
\newblock Spectral statistics of {E}rd{\H o}s-{R}\'enyi graphs {I}: {L}ocal
  semicircle law.
\newblock {\em Ann. Probab.}, 41(3B):2279--2375, 2013.

\bibitem{ErdosYauYin2012}
L.~Erd\H{o}s, H.-T. Yau, and J.~Yin.
\newblock Rigidity of eigenvalues of generalized {W}igner matrices.
\newblock {\em Adv. Math.}, 229(3):1435--1515, 2012.

\bibitem{Feldman2015NIPS}
V.~Feldman, W.~Perkins, and S.~Vempala.
\newblock Subsampled power iteration: a unified algorithm for block models and
  planted {CSP}'s.
\newblock In {\em Advances in Neural Information Processing Systems}, pages
  2836--2844, 2015.

\bibitem{Florescu2016COLT}
L.~Florescu and W.~Perkins.
\newblock Spectral thresholds in the bipartite stochastic block model.
\newblock In {\em Conference on Learning Theory}, pages 943--959, 2016.

\bibitem{Girko1984}
V.~L. Girko.
\newblock The circular law.
\newblock {\em Teor. Veroyatnost. i Primenen.}, 29(4):669--679, 1984.

\bibitem{HuangLandonYau2017}
J.~Huang, B.~Landon, and H.-T. Yau.
\newblock {Transition from Tracy--Widom to Gaussian fluctuations of extremal
  eigenvalues of sparse Erd\H{o}s-R\'enyi graphs}.
\newblock {\em arXiv:1712.03936}.


\bibitem{supplement} J.\ H.~Hwang, J.\ O.~Lee, and K.~ Schnelli. Supplement to \newblock {Local law and Tracy--Widom limit for sparse sample covariance matrices}, \newblock{2018.}


\bibitem{Johansson2000}
K.~Johansson.
\newblock Shape fluctuations and random matrices.
\newblock {\em Comm. Math. Phys.}, 209(2):437--476, 2000.

\bibitem{Johnstone2001}
I.~M. Johnstone.
\newblock On the distribution of the largest eigenvalue in principal components
  analysis.
\newblock {\em Ann. Statist.}, 29(2):295--327, 2001.

\bibitem{KnowlesYin2017}
A.~Knowles and J.~Yin.
\newblock Anisotropic local laws for random matrices.
\newblock {\em Probab. Theory Related Fields}, 169(1-2):257--352, 2017.

\bibitem{KritchmanNadler2008}
S.~Kritchman and B.~Nadler.
\newblock Determining the number of components in a factor model from limited
  noisy data.
\newblock {\em Chemom. Intell. Lab. Syst.}, 94(1):19--32, 2008.

\bibitem{LeeSchnelli2015}
J.~O. Lee and K.~Schnelli.
\newblock Edge universality for deformed {W}igner matrices.
\newblock {\em Rev. Math. Phys.}, 27(8):1550018, 94, 2015.

\bibitem{LeeSchnelli2016}
J.~O. Lee and K.~Schnelli.
\newblock Tracy-{W}idom distribution for the largest eigenvalue of real sample
  covariance matrices with general population.
\newblock {\em Ann. Appl. Probab.}, 26(6):3786--3839, 2016.

\bibitem{LeeSchnelli2017}
J.~O. Lee and K.~Schnelli.
\newblock Local law and {T}racy--{W}idom limit for sparse random matrices.
\newblock {\em Probab. Theory Related Fields}, 171(1-2):543--616, 2018.

\bibitem{Lei2016}
J.~Lei.
\newblock A goodness-of-fit test for stochastic block models.
\newblock {\em Ann. Statist.}, 44(1):401--424, 2016.

\bibitem{LytovaPastur2009}
A.~Lytova and L.~Pastur.
\newblock Central limit theorem for linear eigenvalue statistics of random
  matrices with independent entries.
\newblock {\em Ann. Probab.}, 37(5):1778--1840, 2009.

\bibitem{MarchenkoPastur1967}
V.~A. Mar\v{c}enko and L.~A. Pastur.
\newblock Distribution of eigenvalues in certain sets of random matrices.
\newblock {\em Mat. Sb. (N.S.)}, 72 (114):507--536, 1967.

\bibitem{NadakuditiEdelman2008}
R.~R. Nadakuditi and A.~Edelman.
\newblock Sample eigenvalue based detection of high-dimensional signals in
  white noise using relatively few samples.
\newblock {\em IEEE Trans. Signal Process.}, 56(7, part 1):2625--2638, 2008.

\bibitem{Onatski2009}
A.~Onatski.
\newblock Testing hypotheses about the numbers of factors in large factor
  models.
\newblock {\em Econometrica}, 77(5):1447--1479, 2009.

\bibitem{PillaiYin2014}
N.~S. Pillai and J.~Yin.
\newblock Universality of covariance matrices.
\newblock {\em Ann. Appl. Probab.}, 24(3):935--1001, 2014.

\bibitem{Quandt1972}
R.~E. Quandt.
\newblock A new approach to estimating switching regressions.
\newblock {\em J. Amer. Statist. Assoc.}, 67(338):306--310, 1972.

\bibitem{Stein1981}
C.~M. Stein.
\newblock Estimation of the mean of a multivariate normal distribution.
\newblock {\em Ann. Statist.}, 9(6):1135--1151, 1981.

\bibitem{Timmermann2000}
A.~Timmermann.
\newblock Moments of {M}arkov switching models.
\newblock {\em J. Econometrics}, 96(1):75--111, 2000.

\end{thebibliography}
\end{document}